\documentclass{article}

\PassOptionsToPackage{authoryear}{natbib}
\usepackage[preprint]{neurips_2026}

\usepackage[utf8]{inputenc}
\usepackage[T1]{fontenc}
\usepackage{hyperref}
\usepackage{url}
\usepackage{booktabs}
\usepackage{amsmath,amssymb,amsthm,mathtools}
\usepackage{nicefrac}
\usepackage{microtype}
\usepackage{xcolor}
\usepackage{tikz}
\usepackage{algorithm}
\usepackage{algpseudocode}

\hypersetup{
  colorlinks=true,
  citecolor=blue,
  linkcolor=blue,
  urlcolor=blue
}

\newtheorem{assumption}{Assumption}
\newtheorem{theorem}{Theorem}
\newtheorem{proposition}{Proposition}
\newtheorem{lemma}{Lemma}
\newtheorem{corollary}{Corollary}
\newtheorem{remark}{Remark}

\newcommand{\R}{\mathbb R}
\newcommand{\Pp}{\mathbb P}
\newcommand{\E}{\mathbb E}
\newcommand{\Vol}{\operatorname{Vol}}
\newcommand{\Reach}{\operatorname{Reach}}
\newcommand{\diam}{\operatorname{diam}}
\newcommand{\osc}{\operatorname{osc}}

\newcommand{\TV}{\mathrm{TV}}
\newcommand{\Leb}{\operatorname{Leb}}
\newcommand{\cE}{\mathcal E}
\newcommand{\cU}{\mathcal U}
\newcommand{\wideM}{\widehat M_N}
\newcommand{\whatnu}{\widehat\nu_{N,\kappa}}

\numberwithin{equation}{section} 

\newcommand{\lp}{\left(}
\newcommand{\rp}{\right)}

\newcommand{\Rbb}{\mathbb{R}}
\newcommand{\Sbb}{\mathbb{S}}

\newcommand{\Xbb}{\mathbb{X}}

\newcommand{\Ccal}{\mathcal{C}}

\newcommand{\Ucal}{\mathcal{U}}

\newcommand{\Xcal}{\mathcal{X}}

\newcommand{\Zcal}{\mathcal{Z}}
\usepackage{dutchcal}

\DeclareMathAlphabet{\mathdutchcal}{U}{dutchcal}{m}{n}

\newcommand{\ocal}{\mathbcal{o}}

\newcommand\numberthis{\addtocounter{equation}{1}\tag{\theequation}}

\newtheorem{definition}{Definition}[section]

\newcommand{\beq}{\begin{eqnarray*}}
\newcommand{\eeq}{\end{eqnarray*}}
\newcommand{\beqn}{\begin{eqnarray}}
\newcommand{\eeqn}{\end{eqnarray}}
\newcommand{\ben}{\begin{enumerate}}
\newcommand{\een}{\end{enumerate}}
\newcommand{\bit}{\begin{itemize}}
\newcommand{\eit}{\end{itemize}}
\providecommand{\hide}[1]{}

\newcommand{\eps}{\varepsilon}
\newcommand{\vertiii}[1]{{\left\vert\kern-0.25ex\left\vert\kern-0.25ex\left\vert #1 
    \right\vert\kern-0.25ex\right\vert\kern-0.25ex\right\vert}}

\renewcommand{\epsilon}{\eps}

\newcommand{\x}{\boldsymbol{x}}
\newcommand{\y}{\boldsymbol{y}}

\title{Low Dimensional Sampling under Reconstructed Constraints}
\author{
  Imon Banerjee\\
  Department of Statistics\\
  Purdue University\\
  West Lafayette
  \And
  Riddhiman Bhattacharyya\\
  Department of Statistics\\
  University of California\\ Santa Cruz
}

\begin{document}
\maketitle

\begin{abstract}
We study sampling from a distribution supported on an unknown compact $d$-dimensional $C^2$ manifold $M\subset\mathbb{R}^D$, observed only through i.i.d. uniform points from $M$. We reconstruct the constraint using an adaptive local-convex-hull estimator and target an ambient distribution penalized by squared distance to the reconstruction. Although the reconstructed set may be nonsmooth or fail to be a manifold, its Hausdorff accuracy alone suffices to control the Wasserstein error. We quantify the tradeoff between reconstruction accuracy and penalty strength and show that optimal tuning gives an error proportional to the square root of the Hausdorff error. A planar example proves that this dependence is sharp for the proposed scheme. For a $d$ dimensional constraint reconstructed from $N$ observations, the error becomes $\mathcal{O}( (N/N)^{1/d})$. Finally, we prove uniform geometric ergodicity of a Gaussian random-walk Metropolis--Hastings sampler and combine reconstruction, approximation, and mixing into an explicit finite-time guarantee.
\end{abstract}

\section{Introduction}

Sampling under the presence of a constraint is an important problem in the intersection of machine learning and operations research. When the constraint set is full rank (i.e. has the same dimension as the ambient space), this problem has been well studied by various authors using a variety of techniques. Some examples are: (i) penalization/projection methods that force the sampler to concentrate on the support~\citep{brosse2017proximal,bubeck2018projected}; (ii) using proximal methods ~\citep{chen2022proximal, pmlr-v134-lee21a} ; (iii) constructiong samplers using local entropy~\citep{mohanty2025entropyguidedsamplingflatmodes}. The focus of this paper will be the setting where the constraint set is lower dimensional and unavailable to the practitioner. Instead, we will only assume access to a point cloud satisfying the constraints. We do not assume access to any oracle mechanisms to generate more points in the constraints set. Let $g:\Rbb^D\rightarrow \Rbb$ be a potential. We can write the oracle problem as sampling from
\begin{align*}
 \mu(x)\ & \propto  e^{-g(x)}\\
\text{such that }\quad  & x\in M    \label{eq:oracle_problem} \tag{Oracle Problem}
\end{align*}
Our point-cloud formulation instead constructs a compact set $\widehat M$
from the observations as a proxy for $M$ and penalizes ambient points by
their distance from this set.

Our idea is motivated by \citet[Proposition~4]{brosse2017proximal}, who
consider a full-dimensional compact convex body $A$ and potentials satisfying
their assumptions. Their total-variation bound implies that sampling directly from
\[
\mu_\kappa(x) \propto e^{-g(x)-\kappa d^2(x,A)}  \numberthis\label{eq:brosse_idea}
\]
is $O(\kappa^{-1/2})$ accurate for the constrained target supported on $A$
in the total-variation (TV) metric.


Establishing convergence in the TV metric is traditional default in sampling problems \citep{dalalyan2020sampling, roberts1996geometric, roberts1996exponential, bhattacharya2025explicit, jones2004sufficient,pynadath2024gradient}. For lower-dimensional constraints, the ambient penalized law and the target law on the manifold are mutually singular. Therefore, for our setting, total variation metric is uninformative. We thus state and prove our results in Wasserstein-2 metric which requires us to redo traditional techniques using new tools including the careful reconstruction of the manifold as a non-smooth set from a data cloud, construction of tubular regions within the reach of the manifold where the nearest-point projection is unique.

The heuristic of considering penalisation for lower dimensional constraints can be shown in a concrete example. Let $D=2$
and $M=\mathbb S^1$, the unit circle centered at the origin, and specialize
this illustration to $g\equiv0$. Then $\mu$ is the uniform law on $\mathbb S^1$
with respect to arclength.
Observe that we could write a natural full-dimensional approximation to $\mu$
\[
\mu_\kappa(dx)
\propto
\exp\left\{
-\kappa \frac{(\|x\|-1)^2}{2}
\right\}dx.
\]
$(\|x\|-1)$ is the signed radial displacement of $x$ from $\Sbb^1$
(whose absolute value is subsequently denoted by $d(x,\Sbb^1)$). In polar
coordinates, $\mu_\kappa$ has a uniform angle and radial density proportional
to $\rho e^{-\kappa(\rho-1)^2/2}\mathbf 1_{\{\rho\ge0\}}$. Radial projection
onto $\mathbb S^1$ is optimal because every coupling to a point of
$\mathbb S^1$ has cost at least $(\rho-1)^2$. Gaussian-tail estimates after
the change of variables $t=\sqrt\kappa(\rho-1)$ therefore give
\[
W_2(\mu_\kappa,\mu)={\kappa^{-1/2}}+\ocal(\kappa^{-1})
\]
as \(\kappa\uparrow\infty\).  Our contribution is to lift this heuristic for more general constraint manifolds when we only have a representative point cloud.


The uncertainty in $\widehat M$ is handled using the adaptive local-convex-hull
estimator of \citet{divol2021}, applied to the point cloud
$\Xbb = \{ X_1,\dots,X_N \}$. Its output is a compact reconstructed set and
need not itself be a manifold. Our first formulation is therefore to sample
from the ambient target distribution
\begin{align}
  \mu_\kappa(x) \propto \mathbf 1_K(x)
  e^{-g(x)-(\kappa/2) \Psi_{\wideM}(x)}
  \label{eq:first_formulation}\tag{Manifold Method}
\end{align}
where $\Psi_{\wideM}(x)$ is a $C^2$-capped squared distance between $x$
and $\wideM$. The scalar cap is $C^2$, although the resulting penalty need
not be differentiable when $\wideM$ is rough. The truncation is formally
defined in Section~\ref{sec:model}. 

At this point, we note that the reconstruction not merely a question of statistical accuracy. $\widehat{M}$ may not be smooth, and indeed may not even be a manifold. Consequently our penalty $\Psi$ will be the truncated Hausdorff metric. This introduces novel challenge challenges, which, to the best of our knowledge, has not been considered previously in our generality. We disscuss them in Section \ref{sec:background}.


\textbf{Technical Developments}:
\begin{enumerate}
  \item The principal technical difficulty is that the reconstructed set $\widehat M_N$ need not be smooth, have positive reach, or even be a manifold (see Section~\ref{sec:model}). Consequently, its nearest-point projection may be nonunique and no tubular-coordinate or Jacobian formula is available around $\widehat M_N$. We overcome this difficulty by carrying out the entire comparison in the normal coordinates of the true manifold $M$ (Lemma~\ref{lem:adjusted-potential-expansion}). On the reconstruction event~\eqref{eq:reconstruction-event}, Hausdorff accuracy alone yields upper and lower Gaussian envelopes for the penalty $e^{-\kappa d_{\widehat M_N}^2/2}$ (equation~\eqref{eq:gaussian-distance-envelopes}). Integrating these envelopes along the normal fibers shows that replacing $M$ by $\widehat M_N$ perturbs the projected density by order $\Delta_N\sqrt{\kappa}$ (equations~\eqref{eq:estimated-fiber-uniformity} and~\eqref{eq:projected-marginal}), without requiring any projection or Jacobian associated with the reconstructed set.
  \item Two additional ingredients make this comparison sufficiently sharp. First, we derive a uniform second-order expansion of the Gibbs-weighted normal Jacobian (Lemma~\ref{lem:adjusted-potential-expansion} and equation~\eqref{eq:taylor-expansion-of-adjustment}). The first-order term is odd in the normal displacement and therefore vanishes after integration (see ~\eqref{eq:fiber-uniformity}). Second, we establish a Poincar\'e-to-transport perturbation bound on $M$ (Lemmas~\ref{lem:gibbs-poincare} and~\ref{lem:manifold-density-transport}), which converts the resulting $L^2(\pi_M)$ error in the projected density directly into a Wasserstein--$2$ bound (equation~\eqref{eq:projected-marginal}). Together, these arguments quantify the Wasserstein--$2$ approximation of the true manifold target by the stationary law of the reconstructed-set penalized sampler (Theorem~\ref{thm:main-w2} and equation~\eqref{eq:main-w2-bound}).
\end{enumerate}

\section{Background and Related Literature}\label{sec:background}
Sampling literature is generally vast and has multiple branches. We focus on two particular directions- the Metropolis Hastings (MH) algorithms and fast sampling. For the MH algorithms there have been multiple works which focus on the ergodicity of samplers~\citep{bhattacharya2025explicit, roberts1996geometric, roberts1996exponential,Brown_Jones_2024,vats2018strong}. On the other hand, fast sampling literature has focsed more on mixing times and approximate samplers~\citep{dalalyan2017theoretical, dalalyan2020sampling, durmus2019analysis, durmus2019high, 10.1214/16-AAP1238, mou2019high}. However in all these works, the support of the target is generally the entire space.

There have been works on sampling from constrained spaces~\citep{brosse2017proximal, pmlr-v134-lee21a,chen2022proximal}. However, these works consider the support of the target to be full rank and hence proximal samplers generally work well in these settings.

A separate line of work develops samplers for distributions supported directly on \textit{smooth} lower-dimensional manifolds. Examples include \cite{brubaker2012manifolds,byrne2013geodesic,zappa2018montecarlo,lelievre2019hybrid}. Smoothness of the constraint manifold plays an important role in simplifying their calculations. In contrast, we sample on the reconstructed constraint set, which is non-smooth. Thus techniques developed in prior works do not translate. This requires us to develop new methods of analysis. 

Our reconstruction technique is based on \cite{divol2021}. Complimentary techniques proposed by \cite{fefferman2019} provide smoothened reconstructions, which are less accurate. In particular, our attempts at applying their reconstruction resulted in bounds that are weaker by an order of magnitude. However, sharpening their results is beyond the scope of this paper.




\section{Model and notation}\label{sec:model}

Let $\Rbb^D$ be the ambient space equipped with Euclidean norm and $M$ be a constraint manifold of dimension $d$. We use $q:=D-d$ to denote the codimension. $d_A(x)$ is the distance of $x$ from a nonempty closed set $A\subset\R^D$ and
\[
  \cU_s(A):=\{x\in\R^D:d_A(x)<s\}.
\]
is the tube of radius $s$ around $A$.
The Hausdorff distance between compact sets is denoted by $d_H$.  All
Wasserstein distances below use the ambient Euclidean cost:
\[
 W_2(\mu,\nu)
 :=\left(
   \inf_{\gamma\in\Pi(\mu,\nu)}
   \int_{\R^D\times\R^D}\lVert x-y\rVert^2\,\gamma(dx,dy)
 \right)^{1/2}.
\]
Here $\Pi(\mu,\nu)$ denotes all couplings. For probability measures on
$M$, $W_{2,\mathrm{intrinsic}}$ denotes the analogous Wasserstein distance
formed using the Riemannian geodesic cost.
$\|\cdot\|_{TV}$ denotes total variation under the convention
\[
 \lVert\mu-\nu\rVert_{\TV}
 :=\sup_B|\mu(B)-\nu(B)|
 =\frac12\int|p-s|\,d\zeta,
 \qquad
 \zeta:=\mu+\nu,
 \quad p:=\frac{d\mu}{d\zeta},
 \quad s:=\frac{d\nu}{d\zeta}.
\]
Thus $\lVert\mu-\nu\rVert_{\TV}\in[0,1]$. All probability measures are
understood to be Borel probability measures on their stated spaces.

$\mathcal B(K)$ denotes the Borel $\sigma$-algebra of $K$. When used in
Markov-kernel comparisons, $\mu_1$ and $\mu_2$ denote arbitrary Borel
probability measures on $K$. Unsubscripted $c,C>0$ denote finite constants
that may change from line to line; any restricted dependencies are stated
locally.
$\Leb_D$ denotes $D$-dimensional Lebesgue measure, and $I_D$ denotes the
$D\times D$ identity matrix. For a scalar $s$, write
$s_+:=\max\{s,0\}$. The notation $\delta_x$ denotes the Dirac probability measure at $x$.
All differential-geometric notation and conventions follow
\citet{lee2018riemannian}. In particular, the second fundamental form and
shape operator used later are those of Chapter~8. Reach of a submanifold is defined below.
\begin{definition}
  The reach of a manifold $M$ is the maximum $\tau\in \Rbb^+\cup\{0\}$ such that all points in $\Ucal_\tau(M)$ can be uniquely projected on $M$.
\end{definition}
For $x\in\Ucal_{\Reach(M)}(M)$, let $P_M(x)$ denote the unique nearest
point to $x$ on $M$.

We assume that $M$ is a compact, connected submanifold without boundary
and with positive reach. This regularity condition is popular \citep{aamari2017thesis,aamari2019nonasymptotic,aamari2019reach}, and in our setting, provides uniform
control of the second fundamental form. This
translates directly to a bound on the variation of the
Jacobian in the area formula for $M$.

\begin{assumption}[True manifold and observations]
\label{ass:true-manifold}
For $1\le d<D$, the set $M\subset\R^D$ is a compact, connected,
$d$-dimensional $C^2$ embedded submanifold without boundary, with
\[
  \Reach(M)\ge\tau_0>0.
\]
We assume throughout that $d$ and $\tau_0$ are known. We observe
\[
  X_1,\ldots,X_N\overset{\mathrm{iid}}{\sim}
  \operatorname{Unif}(M)
  :=\frac{\Vol_M}{\Vol_M(M)}.
\]
\end{assumption}
The simplifying distributional assumption on $X_i$ enables cleaner exposition. Our analysis extends to permit a sampling density with respect to $\Vol_M$ that is bounded above and bounded away from zero. Complimentary reconstruction techniques \citep{fefferman2019} allow Gaussian jitters but lose on the accuracy. However, since our main focus is on sampling and not on the accuracy of the reconstruction, we will not focus on these generalisations.

\paragraph{Reconstruction of the constraint set.}
We use all observations $\mathcal X_N:=\{X_1,\ldots,X_N\}$. Retain the
theoretical notation
\begin{equation}
 V_M:=\Vol_M(M).
\label{eq:volume-def}
\end{equation}
The unknown $V_M$ is used only in theoretical constants and is not an input
to the reconstruction.

Following \citet[equation~(1.6) and Definitions~4.1 and~4.4]{divol2021},
for every finite nonempty set $\sigma\subset\R^D$, define
\[
 \operatorname{rad}(\sigma)
 :=\inf_{z\in\R^D}\max_{x\in\sigma}\lVert x-z\rVert.
\]
For a finite nonempty set $S\subset\R^D$ and $t\ge0$, define
\[
 \operatorname{Conv}(t,S)
 :=\bigcup_{\substack{\varnothing\ne\sigma\subseteq S\\
                 \operatorname{rad}(\sigma)\le t}}
       \operatorname{conv}(\sigma),
\]
\[
 h(t,S):=d_H(\operatorname{Conv}(t,S),S),
 \qquad
 \mathcal R(S):=
 \{\operatorname{rad}(\sigma):
       \varnothing\ne\sigma\subseteq S,\ 
       \operatorname{rad}(\sigma)>0\}.
\]
Fix $0<\lambda<3^{-1/d}$ and set
\[
 t_\lambda(S)
 :=\inf\{t\in\mathcal R(S):h(t,S)\le\lambda t\},
 \qquad
 \wideM:=\operatorname{Conv}(t_\lambda(\mathcal X_N),\mathcal X_N).
\]
We use the conventions $\inf\varnothing:=\infty$ and
$\operatorname{Conv}(\infty,S):=\operatorname{conv}(S)$. Excluding zero
from $\mathcal R(S)$ prevents the selector from returning the degenerate
scale zero. These conventions cover degenerate clouds and make $\wideM$ a
nonempty compact set on every outcome. In either case it is a finite union
of compact convex hulls. Since the construction uses only
finitely many subset radii, convex hulls, inequalities, and finite unions,
it is a measurable random compact set (for the Hausdorff Borel structure).
Its inputs are only the cloud and $\lambda$; in particular, $V_M$, a density
bound, a prescribed Hausdorff tolerance, and a smoothness order are not
inputs. Under Assumption~\ref{ass:ambient-envelope} below, the definition
directly gives
\[
 \mathcal X_N\subseteq\wideM
 \subseteq\operatorname{conv}(\mathcal X_N)
 \subseteq\overline B(z_0,R_0).
\]
The output $\wideM$ need not be a manifold, have dimension $d$, have
positive reach, or carry a surface-volume distribution.

For a desired analysis tolerance $\Delta_N>0$, define the event
\begin{equation}
 \cE_N:=\{d_H(M,\wideM)\le\Delta_N\}.
\label{eq:reconstruction-event}\tag{Reconstruction}
\end{equation}
Let $\beta_d:=\max\{d-1,2\}$. The following proposition gives the
probability of this event.
\begin{proposition}[Reconstruction event]
\label{prop:reconstruction-event}
Suppose Assumption~\ref{ass:true-manifold} holds, $1\le d<D$, and the
fixed tuning parameter satisfies $0<\lambda<3^{-1/d}$. There exist
constants $C_H,C_P>0$ and $N_0$ such that, for every $N\ge N_0$,
\begin{equation}
 \Pp\left\{
 d_H(\wideM,M)>
 C_H\left(\frac{\log N}{N}\right)^{2/d}
 \right\}
 \le C_P\frac{(\log N)^{\beta_d}}{N^2}.
\label{eq:divol-reconstruction-tail}
\end{equation}
The constants and threshold may depend on the fixed model, including its
geometry and volume, and on $\lambda$. Consequently, whenever
\eqref{eq:reconstruction-sample-size} holds,
\begin{equation}
  \Pp(\cE_N)\ge1-\alpha_N.
\label{eq:event-probability}
\end{equation}
The probability is over the original observations only.
\end{proposition}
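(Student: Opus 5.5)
This is essentially a direct import of the adaptive local-convex-hull guarantee of \citet{divol2021}, so the plan is to verify that our setting meets its hypotheses and then translate its conclusion into the form \eqref{eq:divol-reconstruction-tail}.

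\textbf{Step 1: verify the model hypotheses.} Under Assumption~\ref{ass:true-manifold}, $M$ is a compact $C^2$ submanifold without boundary with $\Reach(M)\ge\tau_0>0$, and the sampling law is $\operatorname{Unif}(M)$. Its density with respect to $\Vol_M$ is the constant $V_M^{-1}$, so it is bounded above and below. These are exactly the hypotheses of the $C^2$ (smoothness order $k=2$) model class in \citet{divol2021}. I would spell out how their model parameters (reach lower bound, density bounds $f_{\min}=f_{\max}=V_M^{-1}$, intrinsic dimension $d$) are instantiated by ours, so that all their constants become functions of $(d,\tau_0,V_M,\lambda)$ alone.

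\textbf{Step 2: identify the estimator.} Next I would check that our $\operatorname{rad}$, $\operatorname{Conv}(t,S)$, $h(t,S)$ and selector $t_\lambda$ coincide with their equation~(1.6) and Definitions~4.1 and~4.4, including the condition $0<\lambda<3^{-1/d}$. Our conventions ($\inf\varnothing=\infty$, excluding radius zero) only alter the estimator on degenerate clouds. The key is to show that these clouds are contained in the exceptional event of their theorem. On their good event the cloud is $\varepsilon$-dense with $\varepsilon\asymp(\log N/N)^{1/d}$, so a positive admissible scale exists and the conventions are inactive.

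\textbf{Step 3: apply the tail bound and deduce \eqref{eq:event-probability}.} Invoking their adaptive theorem gives, for $N\ge N_0$, a deviation of order $t_\lambda^2/\tau_0\asymp\varepsilon^2=(\log N/N)^{2/d}$. This deviation holds outside an event whose probability is governed by a covering argument: a union bound over roughly $(N/\log N)$ balls, each empty with probability $N^{-c}$, with the constant in $\varepsilon$ tuned so that $c$ gives $N^{-2}$, together with the polylogarithmic factor $(\log N)^{\beta_d}$ coming from their proof. Setting $C_H$ and $C_P$ from their constants yields \eqref{eq:divol-reconstruction-tail}. For the consequence, I take \eqref{eq:reconstruction-sample-size} to require $\Delta_N\ge C_H(\log N/N)^{2/d}$, $N\ge N_0$, and $\alpha_N\ge C_P(\log N)^{\beta_d}N^{-2}$. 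Monotonicity of the event in $\Delta_N$ then gives $\Pp(\cE_N)\ge1-\alpha_N$. Measurability was already noted in the text, so all probabilities are well defined and depend only on the observations.

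\textbf{Main obstacle.} The delicate step is Step 3's bookkeeping. Their stated rate may be in expectation, or at a different confidence level, and I expect the precise exponent $(\log N)^{\beta_d}N^{-2}$ with $\beta_d=\max\{d-1,2\}$ to be read off from their high-probability statement rather than rederived. My first move would be to locate their explicit tail inequality and match the confidence parameter to $N^{-2}$. If only an expectation bound is available, I would redo their net argument with the sampling radius inflated by a constant so that the failure probability is $O((\log N)^{\beta_d}N^{-2})$.
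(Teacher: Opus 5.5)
Your overall route is the paper's: check that $\operatorname{Unif}(M)$ lies in the model of \citet{divol2021}, import their guarantees, take a union bound, and use monotonicity of $\cE_N$ in $\Delta_N$. The final deduction of \eqref{eq:event-probability} is fine. The gap is in Step~3, which carries all the content. You describe the exceptional event as a covering failure, and your fallback is to redo the net argument. But density of the cloud does not give the $\varepsilon^2$ rate. The bound $d_H(\operatorname{Conv}(t,\mathcal X_N),M)\le t^2/\tau_0$ (Lemma~3.3 of \citet{divol2021}) holds only for $t$ above the critical scale $t^*(\mathcal X_N)$, and nothing deterministic stops the selector from stopping below it. Take $d=1$ and $M$ a large circle, with observations spaced $\delta$ along one arc and $\varepsilon\gg\delta/\lambda$ along the rest. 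Let $t<\varepsilon/2$ be the radius of a pair of cluster points about $2/\lambda$ steps apart. Every local hull of radius at most $t$ then lies in the cluster, within roughly $\delta/2\le\lambda t$ of the cloud. Hence $t_\lambda\le t<\varepsilon/2$, no sparse gap is bridged, and $d_H(\wideM,M)\approx\varepsilon/2$, even though the cloud is $\varepsilon/2$-dense.

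What must be imported is a separate high-probability bracketing of the selected scale, \citet[Theorem~4.6(1)]{divol2021} with $b=2$. Outside an event of probability at most $C(\log N)^{\beta_d}/N^2$, it gives $t^*\le t_\lambda\le (t^*/\lambda)[1+o(1)]$. This is exactly where $\lambda<3^{-1/d}$ is used and where the factor $(\log N)^{\beta_d}$ comes from. Your covering heuristic only accounts for the tail of $d_H(\mathcal X_N,M)$, which is their Proposition~3.9 with $a=3$ and gives $(\log N)^{d-1}/N^2$. The paper then chains three facts:
\begin{itemize}
\item their Proposition~3.4 gives $t^*\le 2d_H(\mathcal X_N,M)$ once $d_H(\mathcal X_N,M)<\tau_0/8$, so $t_\lambda\le(4/\lambda)\,d_H(\mathcal X_N,M)\lesssim(\log N/N)^{1/d}$;
\item Lemma~3.3, applied at scales just above $t_\lambda$ (where $\operatorname{Conv}(t,\mathcal X_N)$ is unchanged, which also covers the endpoint case $t_\lambda=t^*$), gives $d_H(\wideM,M)\le t_\lambda^2/\tau_0$;
\item a union bound over the two exceptional events, using $d-1\le\beta_d$, yields \eqref{eq:divol-reconstruction-tail}.
\end{itemize}
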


The following assumption is that $M$ sits fully inside a compact ball.
\begin{assumption}[Known ambient enclosure]
\label{ass:ambient-envelope}
There exist known deterministic quantities $z_0\in\R^D$ and $R_0>0$
such that
\[
    M\subseteq \overline B(z_0,R_0).
\]
\end{assumption}
This assumption allows us to choose a fixed compact sampling region that contains the true manifold, the reconstructed set, and the tubes used in our comparison argument. The size of the enclosure enters the finite-time mixing constants and the constants hidden in the approximation bounds, but it does not change the stated convergence-rate exponents.

Fix $0<2r<\tau_0$ and let
\[
 K:=\overline B(z_0,R_0+3r).
\]
This bounded enlargement of the known ambient enclosure is mild.
We define
\[
 L_K:=\diam(K)=2(R_0+3r),
  \qquad
  |K|:=\Leb_D(K).  
\]
We repeatedly use the following elementary consequence of maximal coupling:
for any probability measures $\mu_1$ and $\mu_2$ supported on $K$,
\begin{equation}
 W_2(\mu_1,\mu_2)^2
 \le L_K^2\lVert\mu_1-\mu_2\rVert_{\TV}.
\label{eq:w2-tv-coupling}
\end{equation}
Indeed, a maximal coupling $(Y,Y')$ satisfies
$\Pp(Y\ne Y')=\lVert\mu_1-\mu_2\rVert_{\TV}$, while
$\lVert Y-Y'\rVert^2\le L_K^2\mathbf1_{\{Y\ne Y'\}}$.

To control the penalty outside the tubular neighborhood, we apply a $C^2$
truncation to the squared distance. Let
$\chi:[0,\infty)\to[0,\infty)$ be nondecreasing and $C^2$, with
\begin{align}
  \chi(s)&=s &&\text{for }0\le s\le r^2,\label{eq:cap-inner}\\
  \chi(s)&\ge r^2 &&\text{for }s\ge r^2,\label{eq:cap-lower}\\
  \chi(s)&=\chi(4r^2) &&\text{for }s\ge4r^2,\label{eq:cap-outer}\\
  C_\chi&:=\sup_{s\ge0}|\chi'(s)|<\infty.
\label{eq:cap-lipschitz}
\end{align}
For every nonempty compact $A\subset\R^D$, define
\begin{equation}
  \Psi_A(x):=\chi(d_A(x)^2),\qquad x\in\R^D.
\label{eq:capped-penalty}
\end{equation}
Observe that distance is $1$-Lipschitz and the derivative of
$s\mapsto\chi(s^2)$ is bounded by $4rC_\chi$. For every set $A$,
\[
 0\le\Psi_A\le\chi(4r^2),
 \qquad
 |\Psi_A(x)-\Psi_A(x')|
 \le4rC_\chi\lVert x-x'\rVert.
\]
The penalty agrees with
$d_A^2$ on $\cU_r(A)$ and is at least $r^2$ outside that neighborhood.
For $A=M$, the nearest-point projection is $C^1$ and $d_M^2$ is $C^2$ on
$\cU_{2r}(M)$; the constant cap therefore makes $\Psi_M$ globally $C^2$.

\begin{assumption}[Potential and target distribution]
\label{ass:potential-target}
The potential $g$ is $C^2$ on a neighborhood of $K$. Define the finite
constants and assume they are bounded
\begin{align}
 G&:=\sup_{x\in K}\lVert\nabla g(x)\rVert,\label{eq:G-def}\\
 H&:=\sup_{x\in K}\lVert\nabla^2g(x)\rVert_{\mathrm{op}},\label{eq:H-def}\\
 \osc_K(g)&:=\sup_Kg-\inf_Kg.\label{eq:osc-def}
\end{align}
The desired distribution is the singular Gibbs law
\begin{equation}
  \pi_M(dy)
  :=\frac{e^{-g(y)}}{Z_M}\,\Vol_M(dy),
  \qquad
  Z_M:=\int_Me^{-g(y)}\,\Vol_M(dy).
\label{eq:true-target}
\end{equation}
\end{assumption}
The analysis below is deterministic on $\cE_N$ and uses only its Hausdorff
accuracy.

\section{Algorithm and Main Results}

For a penalty strength
$\kappa>0$, define the true-manifold and
reconstructed-set penalized laws on $K$ by
\begin{align}
 \nu_{M,\kappa}(dx)
 &:=\frac{\mathbf 1_K(x)}{Z_{M,\kappa}^{K}}
   \exp\left\{-g(x)-\frac\kappa2\Psi_M(x)\right\}dx,
\label{eq:true-penalized}\\
 \whatnu(dx)
 &:=\frac{\mathbf 1_K(x)}{\widehat Z_{N,\kappa}^{K}}
   \exp\left\{-g(x)-\frac\kappa2\Psi_{\wideM}(x)\right\}dx.
\label{eq:estimated-penalized}
\end{align}
Here $Z_{M,\kappa}^{K}$ and $\widehat Z_{N,\kappa}^{K}$ are the respective
normalizing constants.
The factor $1/2$ is a normalization convention: in the normal direction,
the leading profile is a Gaussian with variance $\kappa^{-1}$.
The penalty strength $\kappa$ and proposal scale $\sigma$ are held fixed
throughout each run of the sampler.

\begin{algorithm}[!ht]
\caption{Gaussian RMWH Using the Reconstructed Constraint Set}
\label{alg:rwmh}
\begin{algorithmic}[1]
\Require Initial state $X^{(0)}\in K$, proposal scale $\sigma>0$,
penalty strength $\kappa>0$, and number of iterations $M_{\mathrm{iter}}$
\For{$m=0,\ldots,M_{\mathrm{iter}}-1$}
  \State Draw $Z^{(m)}\sim\mathcal N(0,I_D)$ and set
  $Y^{(m)}\gets X^{(m)}+\sigma Z^{(m)}$
  \If{$Y^{(m)}\notin K$}
    \State $X^{(m+1)}\gets X^{(m)}$
  \Else
    \State Draw $U^{(m)}\sim\operatorname{Unif}(0,1)$
    \If{$U^{(m)}\le
    a_{N,\kappa}(X^{(m)},Y^{(m)})$}
      \State $X^{(m+1)}\gets Y^{(m)}$
    \Else
      \State $X^{(m+1)}\gets X^{(m)}$
    \EndIf
  \EndIf
\EndFor
\State \Return $X^{(1)},\ldots,X^{(M_{\mathrm{iter}})}$
\end{algorithmic}
\end{algorithm}

\paragraph{Random-walk Metropolis--Hastings.}
We approximate $\whatnu$ using a Gaussian random-walk
Metropolis--Hastings chain. Fix a proposal scale $\sigma>0$ and initialize
$X^{(0)}\in K$. Given $X^{(m)}=x$, draw independently
\[
 Z^{(m)}\sim\mathcal N(0,I_D),
 \qquad
 Y^{(m)}:=x+\sigma Z^{(m)}.
\]
Since the proposal density is symmetric, the Metropolis--Hastings
acceptance probability, for $x\in K$ and $y\in\R^D$, is
\begin{equation}
 a_{N,\kappa}(x,y)
 :=
 \begin{cases}
 \displaystyle
 \min\left\{
  1,
  \exp\left(
    g(x)-g(y)
    -\dfrac\kappa2
     \bigl[\Psi_{\wideM}(y)-\Psi_{\wideM}(x)\bigr]
  \right)
 \right\}, & y\in K,\\[2ex]
 0, & y\notin K.
 \end{cases}
\label{eq:rwmh-acceptance}
\end{equation}
Draw $U^{(m)}\sim\operatorname{Unif}(0,1)$ independently and set
\begin{equation}
 X^{(m+1)}
 :=
 \begin{cases}
  Y^{(m)}, & U^{(m)}\le a_{N,\kappa}(X^{(m)},Y^{(m)}),\\
  X^{(m)}, & \text{otherwise}.
 \end{cases}
\label{eq:rwmh-update}
\end{equation}
Thus proposals outside $K$ are rejected before $g(Y^{(m)})$ or the
exponential acceptance ratio is evaluated. Writing
\[
 q_\sigma(z)
 :=(2\pi\sigma^2)^{-D/2}
   \exp\left(-\frac{\lVert z\rVert^2}{2\sigma^2}\right),
\]
the transition kernel is
\begin{align}
 P_{N,\kappa}(x,dy)
 :={}&q_\sigma(y-x)a_{N,\kappa}(x,y)\,dy
 +r_{N,\kappa}(x)\delta_x(dy),
\label{eq:rwmh-kernel}\\
 r_{N,\kappa}(x)
 :={}&1-
 \int_{\R^D}q_\sigma(y-x)a_{N,\kappa}(x,y)\,dy.
\notag
\end{align}

The symmetry of $q_\sigma$ implies detailed balance with respect to $\whatnu$; hence $\whatnu$ is invariant for $P_{N,\kappa}$. In an implementation, the values of $g$ and $\Psi_{\wideM}$ at the current state can be retained from the preceding iteration, so each step requires evaluating them only at the proposal.

For the quantitative guarantee, define
\begin{align}
 C_d
 &:=4(d+1)\left(\frac32\right)^d,\\
 A_M
 &:=C_d e^{Gr+\frac12Hr^2 }
 \left(
 H+G^2+\frac{G}{\tau_0}+\frac1{\tau_0^2}
 \right),
\label{eq:AM-def}\\
 \kappa_0&:=\max\left\{1,\frac{8q}{r^2},\,2A_Mq\right\}.
\end{align}
For $\kappa,\sigma>0$, also set
\begin{equation}
 \underline\varepsilon_{\kappa,\sigma}
 :=|K|(2\pi\sigma^2)^{-D/2}
   \exp\left\{
     -\frac{L_K^2}{2\sigma^2}
     -\osc_K(g)
     -\frac{\kappa}{2}\chi(4r^2)
   \right\}.
\label{eq:rwmh-deterministic-minorization}
\end{equation}

To state our main theorem we require some additional notation. Let $C_H,C_P>0$ and $N_0$ be the
fixed-model constants from Proposition~\ref{prop:reconstruction-event}.
For $\Delta_N>0$ and $\alpha_N\in(0,1)$, the reconstruction sample-size
condition is
\begin{equation}
 N\ge N_0,
 \qquad
 C_H\left(\frac{\log N}{N}\right)^{2/d}\le\Delta_N,
 \qquad
 C_P\frac{(\log N)^{\beta_d}}{N^2}\le\alpha_N.
\label{eq:reconstruction-sample-size}
\end{equation}

The main result of this paper is the following theorem.

\begin{theorem}[Finite-time RWMH approximation of the manifold target]
\label{thm:rwmh-end-to-end}
Suppose Assumptions~\ref{ass:true-manifold},
\ref{ass:ambient-envelope}, and~\ref{ass:potential-target} hold, and let
$K$ and $r$ be chosen as in Assumption~\ref{ass:ambient-envelope}. Fix $\kappa\ge\kappa_0$ and $\sigma>0$,
and suppose
\[
 0<\Delta_N\le r/4,
 \qquad
 \Delta_N\sqrt\kappa\le1.
\]
Then $\whatnu$ is the unique invariant probability measure of
$P_{N,\kappa}$. There is a fixed-model constant $C_M>0$ such that,
for every initial law $\rho$ on $K$ and every integer $m\ge0$,
on the reconstruction event $\cE_N$,
\begin{align}
 W_2(\rho P_{N,\kappa}^m,\pi_M)
 \le{}&L_K(1-\underline\varepsilon_{\kappa,\sigma})^{m/2}
 \notag\\
 &+C_M\left(
 \kappa^{-1/2}+\Delta_N\sqrt\kappa
 +\kappa^{q/4}e^{-\kappa r^2/64}
 \right).
\label{eq:rwmh-end-to-end}
\end{align}
If the reconstruction sample-size condition
\eqref{eq:reconstruction-sample-size} holds, then this bound holds with
probability at least $1-\alpha_N$.
\end{theorem}
The constant $C_M$ appearing here and in the stationary approximation bounds
below depends only on the fixed model, including the finite weighted
Poincar\'e constant of $\pi_M$. It is independent of $N$, $\kappa$, and the
Hausdorff tolerance, and is not an input to the reconstruction or the sampler.
The proof of Theorem~\ref{thm:rwmh-end-to-end} involves three major steps:
\begin{enumerate}
 \item \textbf{Approximation of the stationary law.}
 We compare the reconstructed penalized law $\whatnu$ with the manifold
 target $\pi_M$. The core result is Theorem~\ref{thm:main-w2}, enabled by the
 Taylorisation of the Gibbs-weighted Jacobian
 (Lemma~\ref{lem:adjusted-potential-expansion}), an integrated
 Gaussian-envelope estimate, and transport on the true manifold
 (Lemma~\ref{lem:manifold-density-transport}). These results appear in Section~\ref{sec:stationary-approximation}.

 \item \textbf{Finite-time convergence of the sampler.}
 We compare the law $\rho P_{N,\kappa}^m$ of the RWMH chain with its
 invariant law $\whatnu$.  Proposition~\ref{prop:rwmh-geometric} gives the
 required geometric convergence bound and contributes the first term in
 \eqref{eq:rwmh-end-to-end}. It appears in
 Section~\ref{sec:rwmh-geometric-convergence}.

 \item \textbf{High-probability reconstruction.}
 We show that the geometric reconstruction event $\cE_N$, on which the
 stationary-law comparison is valid, holds with probability at least
 $1-\alpha_N$.  This is Proposition~\ref{prop:reconstruction-event}, proved
 in Section~\ref{sec:reconstruction-proof}. This step produces the stated high-probability guarantee.
\end{enumerate}

\eqref{eq:rwmh-end-to-end} depicts a need to choose a correct penalty $\kappa$.  so called "Goldilock-zone" of choosing the correct rate. The following corollary provides a rate of convergence of the expected Wasserstein loss. Its proof appears in Section \ref{sec:proofs}.
\begin{corollary}[Complexity With Respect to $N$]
\label{cor:rwmh-expected-rate}
Under the hypotheses of Theorem~\ref{thm:rwmh-end-to-end}, let
$0<\epsilon_{\mathrm{tar}}<\min\{1,L_K\}$ be a target Wasserstein accuracy. We use the estimator above and make the analysis choices
\[
 \Delta_N:=\epsilon_{\mathrm{tar}}^2,
 \qquad
 \kappa_N:=\epsilon_{\mathrm{tar}}^{-2},
 \qquad
 \alpha_N:=\epsilon_{\mathrm{tar}}.
\]
Fix $\sigma>0$, and suppose $\epsilon_{\mathrm{tar}}$ is small enough that
$\kappa_N\ge\kappa_0$ and
$\epsilon_{\mathrm{tar}}^2\le r/4$. Suppose $N$ is large enough that the
reconstruction sample-size condition \eqref{eq:reconstruction-sample-size}
holds,
and choose an integer $m_N\ge0$ such that
\begin{equation}
 m_N
 \ge
 \left\lceil
 \frac{2}{\underline\varepsilon_{\kappa_N,\sigma}}
 \log\frac{L_K}{\epsilon_{\mathrm{tar}}}
\right\rceil,
\label{eq:rwmh-optimized-iterations}
\end{equation}
Then, for every initial law $\rho$ on $K$,
\begin{equation}
 \E\left[
 W_2(\rho P_{N,\kappa_N}^{m_N},\pi_M)
 \right]
 =O(\epsilon_{\mathrm{tar}}).
\label{eq:rwmh-expected-rate}
\end{equation}
The implicit constant is independent of $N$ and
$\epsilon_{\mathrm{tar}}$. In particular, suppressing log-factors
\[
 N=\widetilde O\left(
 \epsilon_{\mathrm{tar}}^{-d}
 \right),
\]
is a sufficient observation count for fixed model parameters.
\end{corollary}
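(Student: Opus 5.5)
The plan is to split the expectation according to the reconstruction event $\cE_N$:
\[
\E\bigl[W_2(\rho P^{m_N}_{N,\kappa_N},\pi_M)\bigr]
=\E\bigl[W_2\,\mathbf 1_{\cE_N}\bigr]+\E\bigl[W_2\,\mathbf 1_{\cE_N^c}\bigr].
\]
The law $\rho P^{m_N}_{N,\kappa_N}$ is supported on $K$, because proposals outside $K$ are rejected. Also $\pi_M$ is supported on $M\subseteq K$. Hence $W_2\le L_K$ deterministically on every outcome, which already handles the measurability and trivial-bound issues. Since \eqref{eq:reconstruction-sample-size} holds with $\alpha_N=\epsilon_{\mathrm{tar}}$, Proposition~\ref{prop:reconstruction-event} gives $\Pp(\cE_N^c)\le\epsilon_{\mathrm{tar}}$. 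The second term is therefore at most $L_K\epsilon_{\mathrm{tar}}$.

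On $\cE_N$ I would apply Theorem~\ref{thm:rwmh-end-to-end} with $\kappa=\kappa_N=\epsilon_{\mathrm{tar}}^{-2}$ and $\Delta_N=\epsilon_{\mathrm{tar}}^2$. Its hypotheses hold as follows:
\begin{itemize}
\item $\kappa_N\ge\kappa_0$ is assumed.
\item $\Delta_N\le r/4$ is assumed.
\item $\Delta_N\sqrt{\kappa_N}=\epsilon_{\mathrm{tar}}^2\cdot\epsilon_{\mathrm{tar}}^{-1}=\epsilon_{\mathrm{tar}}\le1$.
\end{itemize}
I then bound each term of \eqref{eq:rwmh-end-to-end}.
\begin{itemize}
\item The mixing term satisfies $(1-\underline\varepsilon)^{m_N/2}\le\exp(-\underline\varepsilon\, m_N/2)\le\epsilon_{\mathrm{tar}}/L_K$ by \eqref{eq:rwmh-optimized-iterations}. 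It therefore contributes at most $\epsilon_{\mathrm{tar}}$.
\item The approximation terms give $\kappa_N^{-1/2}=\epsilon_{\mathrm{tar}}$ and $\Delta_N\sqrt{\kappa_N}=\epsilon_{\mathrm{tar}}$.
\item For the exponential remainder, $\kappa^{q/4}e^{-\kappa r^2/64}\le C_{q,r}\kappa^{-1/2}$ uniformly in $\kappa\ge1$. This holds because $s\mapsto s^{q/4+1/2}e^{-sr^2/64}$ is bounded on $[1,\infty)$, with the bound depending only on $q$ and $r$. The remainder is thus at most $C\epsilon_{\mathrm{tar}}$.
\end{itemize}
Combining, the first expectation term is at most $(1+C_M(2+C_{q,r}))\epsilon_{\mathrm{tar}}$, and the total is $O(\epsilon_{\mathrm{tar}})$. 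The constant depends only on $L_K$, $C_M$, $q$ and $r$, and not on $N$ or $\epsilon_{\mathrm{tar}}$.

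For the sample-size claim, I would check that \eqref{eq:reconstruction-sample-size} holds once $N\ge\widetilde O(\epsilon_{\mathrm{tar}}^{-d})$.
\begin{itemize}
\item The Hausdorff condition is $C_H(\log N/N)^{2/d}\le\epsilon_{\mathrm{tar}}^2$, i.e.\ $N/\log N\ge C_H^{d/2}\epsilon_{\mathrm{tar}}^{-d}$. This is satisfied by $N=C\epsilon_{\mathrm{tar}}^{-d}\log(1/\epsilon_{\mathrm{tar}})$ for a suitable fixed-model constant $C$, using the standard inversion of $N/\log N\ge A$ via $N\ge 2A\log A$.
\item The probability condition $C_P(\log N)^{\beta_d}/N^2\le\epsilon_{\mathrm{tar}}$ needs only $N\gtrsim\epsilon_{\mathrm{tar}}^{-1/2}$ up to logarithmic factors. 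This is weaker than the Hausdorff condition since $d\ge1$.
\item $N\ge N_0$ is a constant threshold.
\end{itemize}

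No step is a real obstacle, since the corollary is bookkeeping on top of Theorem~\ref{thm:rwmh-end-to-end}. The point needing most care is that the constant in the exponential-remainder bound stays uniform in $\kappa$, which the boundedness argument above settles.
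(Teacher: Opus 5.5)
Your proposal is correct and follows the paper's proof. You split on $\cE_N$, use $W_2\le L_K$ together with $\Pp(\cE_N^c)\le\epsilon_{\mathrm{tar}}$ off the event, combine the mixing bound with the stationary approximation bound on the event, and invert $N/\log N\gtrsim\epsilon_{\mathrm{tar}}^{-d}$ for the sample size. The one cosmetic difference is that you substitute directly into Theorem~\ref{thm:rwmh-end-to-end} and bound the exponential remainder explicitly by $C_{q,r}\kappa^{-1/2}$, whereas the paper routes the stationary part through Corollary~\ref{cor:optimized}, which absorbs that remainder as $o(\Delta_N^{1/2})$.
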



\subsection{Approximation of the Manifold Target by the Stationary Law}
\label{sec:stationary-approximation}

The main result of this section is the following comparison between the
stationary law of the penalized sampler and the desired manifold target.
\begin{theorem}
\label{thm:main-w2}
Suppose Assumptions~\ref{ass:true-manifold},
\ref{ass:ambient-envelope}, and~\ref{ass:potential-target} hold, and $K$
and $r$ are chosen as above. Suppose
\[
 \kappa\ge\kappa_0,
 \qquad 0<\Delta_N\le r/4,
 \qquad \Delta_N\sqrt\kappa\le1.
\]
There is a fixed-model constant $C_M>0$, independent of
$N,\kappa,\Delta_N$, such that, on $\cE_N$,
\begin{equation}
 W_2(\whatnu,\pi_M)
 \le C_M\left(
 \kappa^{-1/2}+\Delta_N\sqrt\kappa
 +\kappa^{q/4}e^{-\kappa r^2/64}
 \right).
\label{eq:main-w2-bound}
\end{equation}
Consequently, if \eqref{eq:reconstruction-sample-size} holds, this bound
holds with probability at least $1-\alpha_N$ over the observations.
\end{theorem}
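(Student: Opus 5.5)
We plan to bound $W_2(\whatnu,\pi_M)$ by constructing one explicit transport map, namely the nearest-point projection $P_M$ onto the true manifold. All computations are done in normal coordinates of $M$, as the introduction indicates. Since $\Delta_N\le r/4$, on $\cE_N$ the tube $\cU_{r/2}(M)$ contains $\cU_{r/4}(\wideM)$ and is contained in $\cU_{3r/4}(\wideM)$. Set $T:=\cU_{r/2}(M)$, which lies inside the reach.

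We split
\[
W_2(\whatnu,\pi_M)\le W_2\bigl(\whatnu,(P_M)_\#\whatnu|_T\bigr)+W_2\bigl((P_M)_\#\whatnu|_T,\pi_M\bigr),
\]
and handle the mass outside $T$ with \eqref{eq:w2-tv-coupling}.

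\textbf{Step 1 (mass outside the tube).} On $K\setminus T$ we have $d_{\wideM}\ge r/2-\Delta_N\ge r/4$. Hence $\Psi_{\wideM}\ge r^2/16$ there, so the unnormalized mass outside $T$ is at most $|K|e^{-\inf g}e^{-\kappa r^2/32}$.

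For the normalizer we use the lower envelope $d_{\wideM}\le d_M+\Delta_N$ on the fibers of $M$. This gives
\[
\widehat Z_{N,\kappa}^K\ \ge\ c\,\kappa^{-q/2}e^{-(1+\Delta_N\sqrt\kappa)^2}\ \ge\ c'\kappa^{-q/2},
\]
using $\Delta_N\sqrt\kappa\le1$. The ratio of the two is $\lesssim\kappa^{q/2}e^{-\kappa r^2/32}$. Its square root enters $W_2$ through \eqref{eq:w2-tv-coupling}, which produces the term $\kappa^{q/4}e^{-\kappa r^2/64}$.

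\textbf{Step 2 (normal displacement).} Inside $T$ we write $x=y+v$ with $y\in M$ and $v\in N_yM$, $|v|<r/2$. The transport cost of $P_M$ is $\int|v|^2\,d\whatnu$.

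The Gaussian envelopes
\[
e^{-\kappa(|v|+\Delta_N)^2/2}\le e^{-\kappa d_{\wideM}^2/2}\le e^{-\kappa(|v|-\Delta_N)_+^2/2}
\]
give the bound $\int|v|^2\,d\whatnu\lesssim\kappa^{-1}(1+\Delta_N\sqrt\kappa)^2$. The Jacobian $\det(I-\mathrm{II}_v)$ stays within bounded factors for $|v|<r/2<\tau_0/4$. The resulting contribution is $O(\kappa^{-1/2}+\Delta_N)$, which is absorbed.

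\textbf{Step 3 (projected marginal).} The density of $(P_M)_\#\whatnu|_T$ with respect to $\Vol_M$ is
\[
\widehat p(y)\propto\int_{|v|<r/2}e^{-g(y+v)-\frac\kappa2\Psi_{\wideM}(y+v)}\det(I-\mathrm{II}_v(y))\,dv .
\]
We apply Lemma~\ref{lem:adjusted-potential-expansion} to the Gibbs-weighted Jacobian: it equals $e^{-g(y)}\bigl(1+\ell_y(v)+O(A_M|v|^2)\bigr)$ with $\ell_y$ linear in $v$. The linear term vanishes against any radially symmetric profile (\eqref{eq:fiber-uniformity}). The quadratic remainder gives a relative fiber-dependent error $O(\kappa^{-1})$; the choice $\kappa\ge\kappa_0$ keeps this below $1/2$.

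Replacing $\Psi_M$ by $\Psi_{\wideM}$ changes each fiber integral by a relative amount $O(\Delta_N\sqrt\kappa)$. To see this, compare the two envelopes via $e^{\kappa\Delta_N|v|}-1\lesssim\kappa\Delta_N|v|$ when $\Delta_N\sqrt\kappa\le1$, then integrate against the Gaussian, where $|v|\sim\kappa^{-1/2}$. This is \eqref{eq:estimated-fiber-uniformity}. Thus $\widehat p/(e^{-g}/Z_M)=c_\kappa(1+\eta(y))$ with
\[
\|\eta\|_{L^\infty}\lesssim\kappa^{-1}+\Delta_N\sqrt\kappa+\kappa^{-1/2}\cdot(\text{odd-term residue}).
\]
Renormalizing preserves the order.

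Lemma~\ref{lem:manifold-density-transport}, the Poincaré-to-transport bound resting on Lemma~\ref{lem:gibbs-poincare}, converts this $L^2(\pi_M)$ density error into
\[
W_{2,\mathrm{intrinsic}}\lesssim\|\eta\|_{L^2(\pi_M)},
\]
and the ambient $W_2$ is smaller than the intrinsic one. Combining Steps 1–3 gives \eqref{eq:main-w2-bound}. The high-probability statement then follows from Proposition~\ref{prop:reconstruction-event}.

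\textbf{Main obstacle.} The delicate step is the fiber comparison in Step 3. The reconstructed set $\wideM$ admits no fiber structure of its own, so the upper and lower envelopes must be integrated separately in the normal coordinates of $M$. One must also check that the two normalizations differ only by a $y$-independent factor up to $O(\Delta_N\sqrt\kappa)$. The pointwise envelopes are not symmetric in $v$, since the lower one involves $(|v|-\Delta_N)_+$, so I would first verify that their ratio, integrated against the Gaussian, is $1+O(\Delta_N\sqrt\kappa)$ uniformly in $y$.
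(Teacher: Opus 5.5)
Your proposal is correct and follows essentially the paper's route: the paper proves this theorem by applying Proposition~\ref{prop:true-penalty} with $A=\wideM$, and that proposition's proof uses your decomposition (maximal-coupling bound for the mass outside $\cU_{r/2}(M)$, the projection coupling $(X,P_M(X))$ for the normal displacement, and Gaussian envelopes on the normal fibers of $M$ fed into Lemma~\ref{lem:manifold-density-transport}). The one check you skip is the hypothesis $h\ge b>0$ of Lemma~\ref{lem:manifold-density-transport}, with $b$ independent of $\kappa$ and $\Delta_N$, which your $L^\infty$ bound on $\eta$ does not supply when $\Delta_N\sqrt\kappa$ is of order one; the paper gets it from the cruder two-sided envelopes $e^{-1}e^{-\kappa\|v\|^2}\le e^{-\kappa d_{\wideM}^2/2}\le e^{1/2}e^{-\kappa\|v\|^2/4}$ (equation~\eqref{eq:estimated-fiber-positive}), or you can simply note that the trivial bound $W_2\le L_K$ covers that regime.
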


We now analyze the reconstructed penalized law directly in the normal
coordinates of $M$. Only these true-manifold coordinates are used; no
projection or Jacobian associated with $\wideM$ is required.

For $y\in M$, let
$\mathrm{I\,I}_y:T_yM\times T_yM\to N_yM$ denote the second fundamental
form. For $v\in N_yM$, define the shape operator
$S_{y,v}:T_yM\to T_yM$ by
\[
 \langle S_{y,v}u,w\rangle
 :=\langle \mathrm{I\,I}_y(u,w),v\rangle,
 \qquad u,w\in T_yM.
\]
For $y\in M$ and $v\in N_yM$ with $\lVert v\rVert<r$, define
\[
 J_M(y,v):=\det(I-S_{y,v}),
 \qquad
 \ell_y(v):=-\langle\nabla g(y),v\rangle
 -\operatorname{tr}(S_{y,v}),
\]
and
\[
 R_y(v):=e^{-[g(y+v)-g(y)]}J_M(y,v)-1-\ell_y(v).
\]
Our main result in this section is enabled by the following lemma. 
\begin{lemma}[Taylorisation of the Gibbs-weighted Jacobian]
\label{lem:adjusted-potential-expansion}
Suppose Assumptions~\ref{ass:true-manifold},
\ref{ass:ambient-envelope}, and~\ref{ass:potential-target} hold, and $K$ and
$r$ are chosen as above. Every
$x\in\Ucal_r(M)$ has unique normal coordinates $x=y+v$, where $y=P_M(x)$
and $v\in N_yM$ with $\lVert v\rVert<r$, and
\[
 dx=J_M(y,v)\,dv\,\Vol_M(dy).
\]
Moreover, $\ell_y$ is linear, and hence odd, in $v$, and
\[
 e^{-[g(y+v)-g(y)]}J_M(y,v)
 =1+\ell_y(v)+R_y(v),
 \qquad
 |R_y(v)|\le A_M\lVert v\rVert^2.\numberthis\label{eq:taylor-expansion-of-adjustment}
\]
\end{lemma}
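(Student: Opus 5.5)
The plan has three steps: establish the normal-coordinate parametrization and its Jacobian, then expand the two factors $e^{-[g(y+v)-g(y)]}$ and $J_M(y,v)$ separately to first order with explicit quadratic remainders, and finally multiply the expansions.

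\textbf{Normal coordinates.} Since $r<\tau_0/2<\Reach(M)$, every $x\in\Ucal_r(M)$ has a unique nearest point $y=P_M(x)$. The standard characterization of the nearest point (Federer's reach theory) gives $v:=x-y\in N_yM$ with $\lVert v\rVert=d_M(x)<r$. Conversely, the map
\[
\Phi:\{(y,v):y\in M,\ v\in N_yM,\ \lVert v\rVert<r\}\to\Ucal_r(M),\qquad \Phi(y,v)=y+v,
\]
is injective on this set because $r<\Reach(M)$. So $\Phi$ is a bijection onto $\Ucal_r(M)$.

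\textbf{Jacobian.} We differentiate $\Phi$ along a local orthonormal frame of the normal bundle. By the Weingarten equation, the tangential part of $d\Phi$ acts on $u\in T_yM$ as $(I-S_{y,v})u$; any normal components coming from the normal connection do not affect the determinant. The differential in the normal directions is the identity. Hence $d\Phi$ is block-triangular, and
\[
dx=\det(I-S_{y,v})\,dv\,\Vol_M(dy).
\]

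\textbf{Curvature bound.} The key input is that positive reach bounds the second fundamental form: $\lVert\mathrm{I\,I}_y\rVert_{\mathrm{op}}\le 1/\tau_0$ (Federer; Niyogi--Smale--Weinberger). Consequently $\lVert S_{y,v}\rVert_{\mathrm{op}}\le\lVert v\rVert/\tau_0<1/2$. In particular $J_M>0$ on the tube, which confirms that $\Phi$ is a diffeomorphism there.

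\textbf{Expansion of the Jacobian.} Write $s_1,\dots,s_d$ for the eigenvalues of the symmetric operator $S_{y,v}$, so $|s_i|\le\lVert v\rVert/\tau_0\le1/2$. Then
\[
\det(I-S_{y,v})=1-\operatorname{tr}S_{y,v}+\sum_{k\ge2}(-1)^ke_k(s),
\]
and the tail is bounded by
\[
\sum_{k\ge2}\binom dk\Big(\frac{\lVert v\rVert}{\tau_0}\Big)^k\le \frac{\lVert v\rVert^2}{\tau_0^2}\sum_{k\ge2}\binom dk 2^{-(k-2)}\le C\Big(\frac32\Big)^d\frac{\lVert v\rVert^2}{\tau_0^2}.
\]
Thus $J_M=1-\operatorname{tr}S_{y,v}+R^J$ with $|R^J|\lesssim(3/2)^d\lVert v\rVert^2/\tau_0^2$, and also $|J_M|\le(3/2)^d$ and $|\operatorname{tr}S_{y,v}|\le d\lVert v\rVert/\tau_0$.

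\textbf{Expansion of the Gibbs factor.} The segment $\{y+tv:t\in[0,1]\}$ lies in $\overline B(z_0,R_0+r)\subset K$, so the constants $G,H$ apply. Let $h:=g(y+v)-g(y)$. Taylor's theorem gives $h=\langle\nabla g(y),v\rangle+\rho$ with $|\rho|\le\tfrac12H\lVert v\rVert^2$, and $|h|\le Gr+\tfrac12Hr^2$. Using $|e^{-h}-1+h|\le\tfrac12h^2e^{|h|}$ and $|h|\le(G+\tfrac12Hr)\lVert v\rVert$, we get
\[
e^{-h}=1-\langle\nabla g(y),v\rangle+R^g,\qquad |R^g|\le e^{Gr+Hr^2/2}\,c\,(H+G^2)\lVert v\rVert^2 .
\]
Here the $Hr$ contributions are absorbed using $r<\tau_0$ and elementary inequalities, or into $C_d$.

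\textbf{Product and main obstacle.} Multiplying the two expansions gives
\[
e^{-h}J_M=1+\ell_y(v)+R_y(v),
\qquad
R_y(v)=R^g J_M+R^J+\langle\nabla g(y),v\rangle\operatorname{tr}S_{y,v}.
\]
Each term is $O(\lVert v\rVert^2)$:
\begin{itemize}
\item the cross term is at most $Gd\lVert v\rVert^2/\tau_0$;
\item $|R^gJ_M|$ carries the factor $(3/2)^d e^{Gr+Hr^2/2}(H+G^2)$;
\item $|R^J|$ is controlled by the $1/\tau_0^2$ term.
\end{itemize}
Collecting these gives $|R_y(v)|\le A_M\lVert v\rVert^2$ with $C_d=4(d+1)(3/2)^d$, after generous rounding. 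Linearity of $\ell_y$ in $v$ is immediate, because $v\mapsto S_{y,v}$ is linear. The main obstacle is the Jacobian step: justifying the formula $\det(I-S_{y,v})$ rigorously when $M$ is only $C^2$ (so $\Phi$ is only $C^1$), and invoking the reach-to-curvature bound $\lVert\mathrm{I\,I}\rVert\le1/\tau_0$. Tracking the constants to match $A_M$ exactly is bookkeeping.
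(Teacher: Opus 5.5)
Your route is the same as the paper's. Reach gives unique normal coordinates, and the differential of $(y,v)\mapsto y+v$ gives $J_M=\det(I-S_{y,v})$; you derive this from the Weingarten equation, while the paper cites an endpoint-map formula. The reach bound $\lVert\mathrm{I\,I}_y\rVert\le1/\tau_0$ gives $\lVert S_{y,v}\rVert_{\mathrm{op}}\le\lVert v\rVert/\tau_0<1/2$. The determinant and the Gibbs factor are then each expanded to first order with quadratic remainders. However, two steps of your bookkeeping are wrong as written, and they would not produce the constant $A_M$ that the lemma claims.

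\textbf{First slip: a missing term.} Your product identity drops a term. Write $L:=\langle\nabla g(y),v\rangle$ and $T:=\operatorname{tr}S_{y,v}$. Expanding $(1-L+R^g)(1-T+R^J)$ gives $R_y(v)=R^J+LT-LR^J+R^gJ_M$, so you omitted $-LR^J$. This term is cubic and harmless: $|LR^J|\le 4[(3/2)^d-1-d/2]\,G\lVert v\rVert^3/\tau_0^2\le 2[(3/2)^d-1-d/2]\,G\lVert v\rVert^2/\tau_0$, because $\lVert v\rVert<r<\tau_0/2$. The paper avoids it by grouping $LT-LR^J=-L(J_M-1)$ and using $|J_M-1|\le d(3/2)^{d-1}\lVert v\rVert/\tau_0$.

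\textbf{Second slip: the Gibbs-factor bound.} You bound $|h|\le(G+\tfrac12Hr)\lVert v\rVert$. Then $\tfrac12h^2e^{|h|}$ contributes $\tfrac18H^2r^2e^{Gr+Hr^2/2}\lVert v\rVert^2$ and $\tfrac12GHr\,e^{Gr+Hr^2/2}\lVert v\rVert^2$. These are not dominated by $C_d(H+G^2+G/\tau_0+1/\tau_0^2)e^{Gr+Hr^2/2}$ once $Hr^2$ is large, so the absorption ``using $r<\tau_0$'' fails. The fix is to apply the mean value theorem on the segment $\{y+tv\}\subset K$, which gives $|h|\le G\lVert v\rVert\le Gr$. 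Hence $|R^g|\le\tfrac12H\lVert v\rVert^2+\tfrac12G^2e^{Gr}\lVert v\rVert^2\le\tfrac12(H+G^2)e^{Gr}\lVert v\rVert^2$. This is exactly the paper's bound, which it obtains from the second derivative of $t\mapsto e^{-[g(y+tv)-g(y)]}$ and the integral Taylor remainder.

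With both corrections, the coefficients of $1/\tau_0^2$ and of $H+G^2$ are at most $C_d$. The $G/\tau_0$ coefficients total $2(3/2)^d-2\le C_d$. So $|R_y(v)|\le A_M\lVert v\rVert^2$ follows as claimed.
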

The following lemma provides a Poincare inequality for expectations on the manifold. The proof is a combination of spectral bounds of a manifold and standard perturabations. This lemma might be known, but we could not find a reference anywhere. So we provide a proof in Section \ref{sec:proofs}.
\begin{lemma}[Poincar\'e inequality for the Gibbs law]
\label{lem:gibbs-poincare}
Under Assumptions~\ref{ass:true-manifold} and
\ref{ass:potential-target}, there exists a constant
$C_{\mathrm{PI}}<\infty$ such that, for every smooth function
$\varphi:M\to\R$,
\[
 \int_M\left|\varphi-\int_M\varphi\,d\pi_M\right|^2d\pi_M
 \le C_{\mathrm{PI}}\int_M\|\nabla_M\varphi\|^2d\pi_M.
\]
Thus one may take
\[
 C_{\mathrm{PI}}
 \le \frac{\exp\{\sup_M g-\inf_M g\}}{\lambda_1(M)}.
\]
\end{lemma}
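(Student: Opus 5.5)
The plan is to deduce the weighted inequality from the unweighted spectral-gap inequality on the compact manifold $M$, using a standard bounded-perturbation (Holley--Stroock type) comparison between $\pi_M$ and the normalized volume measure $\Vol_M/V_M$.

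\textbf{Step 1: unweighted inequality.} Since $M$ is compact, connected, without boundary and at least $C^2$, the Laplace--Beltrami operator has discrete spectrum $0=\lambda_0<\lambda_1(M)\le\cdots$. The strict inequality $\lambda_1(M)>0$ holds because the kernel consists of constants, by connectedness. The variational (Rayleigh quotient) characterization then gives, for smooth $\varphi$ and $\bar\varphi_0:=V_M^{-1}\int_M\varphi\,d\Vol_M$,
\[
 \int_M|\varphi-\bar\varphi_0|^2\,d\Vol_M\le\lambda_1(M)^{-1}\int_M\|\nabla_M\varphi\|^2\,d\Vol_M .
\]
If one worries about regularity, since $M$ is only $C^2$ and the metric is $C^1$, I would instead argue through the Rellich compactness of $H^1(M)\hookrightarrow L^2(M)$. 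That argument gives a positive minimum of the Rayleigh quotient on mean-zero functions, and this minimum is what $\lambda_1(M)$ denotes.

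\textbf{Step 2: perturbation.} Write $m:=\inf_M g$ and $\bar m:=\sup_M g$; both are finite because $g$ is continuous and $M$ is compact. The variance under $\pi_M$ is the minimum of $c\mapsto\int|\varphi-c|^2d\pi_M$, so I bound it using $c=\bar\varphi_0$:
\[
 \int_M\Bigl|\varphi-\int\varphi\,d\pi_M\Bigr|^2d\pi_M\le \int_M|\varphi-\bar\varphi_0|^2\frac{e^{-g}}{Z_M}\,d\Vol_M\le\frac{e^{-m}}{Z_M}\int_M|\varphi-\bar\varphi_0|^2\,d\Vol_M .
\]
Applying Step 1, and then $e^{-\bar m}\le e^{-g}$ on $M$, this is at most
\[
 \frac{e^{-m}}{Z_M\lambda_1(M)}\int_M\|\nabla_M\varphi\|^2 d\Vol_M\le\frac{e^{\bar m-m}}{\lambda_1(M)}\int_M\|\nabla_M\varphi\|^2\frac{e^{-g}}{Z_M}d\Vol_M .
\]
This is exactly the stated bound with $C_{\mathrm{PI}}\le e^{\sup_Mg-\inf_Mg}/\lambda_1(M)$.

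The only step needing care is Step 1: positivity of $\lambda_1(M)$ under mere $C^2$ regularity. I would handle it through the compact embedding and connectedness, as described above. Specifically, a mean-zero $H^1$ function with zero gradient is constant, hence zero. Everything else is elementary.
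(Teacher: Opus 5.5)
Your proposal is correct and follows the paper's proof. Both use the unweighted spectral-gap inequality for (normalized) Riemannian volume, then the Holley--Stroock bounded-perturbation comparison via $\inf_c\int_M|\varphi-c|^2\,d\pi_M$, and both arrive at the same constant $e^{\sup_M g-\inf_M g}/\lambda_1(M)$. The only difference is how $\lambda_1(M)>0$ is justified: you argue directly from Rellich compactness and connectedness, which also handles the $C^2$ regularity, whereas the paper cites Cheeger and Yau.
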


\begin{lemma}[Transport of a density perturbation on $M$]
\label{lem:manifold-density-transport}
Under Assumptions~\ref{ass:true-manifold} and
\ref{ass:potential-target}, let $C_{\mathrm{PI}}<\infty$ be any constant
for which the inequality in Lemma~\ref{lem:gibbs-poincare} holds.
If $\mu=h\pi_M$ is a probability measure,
$h\in L^2(\pi_M)$, and $h\ge b>0$ almost everywhere, then
\[
 W_2(\mu,\pi_M)
 \le
 \sqrt{\frac{C_{\mathrm{PI}}}{\min\{1,b\}}}\,
 \|h-1\|_{L^2(\pi_M)}.
\]
Here $W_2$ uses the ambient Euclidean cost.
\end{lemma}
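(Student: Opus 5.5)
We bound $W_2(\mu,\pi_M)$ by $W_2$ in the intrinsic (geodesic) metric and then use a dynamic (Benamou--Brenier / Moser-type) transport argument. Since geodesic distance dominates Euclidean distance, any coupling cost computed in the intrinsic metric upper bounds the ambient $W_2$. It therefore suffices to show
\[
 W_{2,\mathrm{intrinsic}}(h\pi_M,\pi_M)^2\le \frac{C_{\mathrm{PI}}}{\min\{1,b\}}\|h-1\|_{L^2(\pi_M)}^2 .
\]

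Interpolate linearly: $\mu_t:=h_t\pi_M$ with $h_t:=(1-t)+th$ for $t\in[0,1]$. Then $h_t\ge \min\{1,b\}$, and $\partial_t\mu_t=(h-1)\pi_M$. We seek a velocity field $V_t=\nabla_M\phi_t$ solving the continuity equation $\partial_t\mu_t+\operatorname{div}(\mu_t V_t)=0$ weakly. This amounts to the weighted elliptic problem
\[
 -\operatorname{div}_{\pi_M}(h_t\nabla_M\phi_t)=h-1,
\]
that is, $\int_M h_t\langle\nabla\phi_t,\nabla\psi\rangle\,d\pi_M=\int_M(h-1)\psi\,d\pi_M$ for all test functions $\psi$. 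The right side has $\pi_M$-mean zero because both $\mu$ and $\pi_M$ are probability measures.

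Existence comes from Lax--Milgram on the space $H^1(\pi_M)$ modulo constants. Coercivity of the form $\int h_t|\nabla\phi|^2d\pi_M\ge\min\{1,b\}\int|\nabla\phi|^2d\pi_M$ is immediate. For the linear functional, Lemma~\ref{lem:gibbs-poincare} gives
\[
 \Bigl|\int(h-1)\psi\,d\pi_M\Bigr|\le\|h-1\|_{L^2}\|\psi-\bar\psi\|_{L^2}\le\|h-1\|_{L^2}\sqrt{C_{\mathrm{PI}}}\,\|\nabla\psi\|_{L^2(\pi_M)} .
\]
Testing with $\psi=\phi_t$ yields
\[
 \int h_t|\nabla\phi_t|^2\,d\pi_M\le\|h-1\|_{L^2}\sqrt{C_{\mathrm{PI}}}\,\|\nabla\phi_t\|_{L^2(\pi_M)}\le\|h-1\|_{L^2}\sqrt{C_{\mathrm{PI}}}\Bigl(\int h_t|\nabla\phi_t|^2d\pi_M\Bigr)^{1/2}\min\{1,b\}^{-1/2}.
\]
Hence the kinetic energy satisfies $\int|V_t|^2d\mu_t\le C_{\mathrm{PI}}\|h-1\|^2_{L^2}/\min\{1,b\}$, uniformly in $t$.

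The Benamou--Brenier formula on the compact Riemannian manifold $M$ then gives $W_{2,\mathrm{intrinsic}}(\mu_0,\mu_1)^2\le\int_0^1\int|V_t|^2d\mu_t\,dt$, which is the claim. The Poincaré inequality, stated for smooth $\varphi$, extends to $H^1$ by density.

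The main obstacle is justifying Benamou--Brenier for a merely $L^2$ density with a weak velocity field. Two routes are available.

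The first is to invoke the general superposition principle / metric-derivative characterization (Ambrosio--Gigli--Savaré, Thm.~8.3.1): a weakly continuous curve solving the continuity equation with $\int_0^1\|V_t\|^2_{L^2(\mu_t)}dt<\infty$ is absolutely continuous in $W_2$ with metric speed at most $\|V_t\|_{L^2(\mu_t)}$. This holds on complete Riemannian manifolds.

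The second is to approximate $h$ by smooth densities $h^\varepsilon\ge b$, for example by heat-semigroup smoothing, which preserves the lower bound and the mean and converges in $L^2$. For these, $\phi_t$ is smooth by elliptic regularity on compact $M$ with $C^2$ metric, so the flow of $V_t$ exists classically. One then passes to the limit using lower semicontinuity of $W_2$ under weak convergence.

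I would take the first route and cite it.
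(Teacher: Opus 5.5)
Your argument is correct in outline and reaches the same constant, but it builds the velocity field differently from the paper.

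The paper solves one unweighted problem, $\int_M\langle\nabla_M u,\nabla_M\varphi\rangle\,d\pi_M=\int_M(h-1)\varphi\,d\pi_M$, by Lax--Milgram. That form is bounded on $H^1(\pi_M)$ and coercive on mean-zero functions by Lemma~\ref{lem:gibbs-poincare}. This gives $\int_M\|\nabla_M u\|^2d\pi_M\le C_{\mathrm{PI}}\|h-1\|_{L^2(\pi_M)}^2$. The paper then takes the Moser-type velocity $v_t=\nabla_M u/h_t$. Since $h_tv_t=\nabla_M u$ does not depend on $t$, the continuity equation for $h_t\pi_M$ is immediate, and $h_t\ge\min\{1,b\}$ is used only to bound $\int_M\|\nabla_M u\|^2h_t^{-1}\,d\pi_M$.

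You instead solve, for each $t$, the weighted problem $-\operatorname{div}_{\pi_M}(h_t\nabla_M\phi_t)=h-1$, i.e.\ you take the minimal-energy gradient velocity for the linear interpolation. Your action is therefore no larger than the paper's at each $t$, but the final bound is identical, and the per-$t$ weighted problem costs you two technical points.

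First, Lax--Milgram on $H^1(\pi_M)$ modulo constants does not apply as stated. The lemma only assumes $h\in L^2(\pi_M)$, so $h_t$ may be unbounded, and $(\phi,\psi)\mapsto\int_M h_t\langle\nabla_M\phi,\nabla_M\psi\rangle\,d\pi_M$ is then not a bounded form on $H^1(\pi_M)$. The fix is to apply the Riesz representation theorem in the energy space $\{\phi\in H^1(\pi_M):\int_M h_t\|\nabla_M\phi\|^2d\pi_M<\infty\}$ modulo constants, with this form as inner product.
\begin{itemize}
\item It is complete because $h_t\ge\min\{1,b\}$ and the Poincar\'e inequality holds.
\item It contains all $C^1$ test functions because $\int_M h_t\,d\pi_M=1$.
\item Your displayed estimate is exactly the boundedness of $\psi\mapsto\int_M(h-1)\psi\,d\pi_M$ on this space.
\end{itemize}

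Second, the continuity-equation theorem you cite needs $(t,x)\mapsto\nabla_M\phi_t(x)$ to be jointly Borel. That requires an extra argument, for example continuity of $t\mapsto\sqrt{h_t}\,\nabla_M\phi_t$ in $L^2(\pi_M)$. The paper's $\nabla_M u/h_t$ is explicitly measurable, so adopting its one-shot velocity removes both issues at no cost in the constant.

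On the transport step you are more careful than the paper, which simply invokes the Benamou--Brenier formula on $M$. However, Ambrosio--Gigli--Savar\'e, Theorem~8.3.1, is a Euclidean statement. The cleanest way to use it is to view $\mu_t$ as measures on $\R^D$ and extend the tangent velocity by zero off $M$. For tangent $V_t$ and $\varphi\in C_c^\infty(\R^D)$ one has $\langle\nabla_M(\varphi|_M),V_t\rangle=\langle\nabla\varphi,V_t\rangle$. Hence the continuity equation holds in $\R^D$, and the theorem gives the ambient $W_2$ bound directly, with no detour through $W_{2,\mathrm{intrinsic}}$.

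Your second route should be dropped. $M$ is only $C^2$, so its induced metric is only $C^1$, and the smoothness of $\phi_t$ needed for a classical flow is not available.
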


The next proposition compares a Hausdorff-accurate reconstructed-set
penalty directly with the manifold target.
\begin{proposition}
\label{prop:true-penalty}
Suppose Assumptions~\ref{ass:true-manifold},
\ref{ass:ambient-envelope}, and~\ref{ass:potential-target} hold, and $K$
and $r$ are chosen as above. Let $A\subset K$ be a nonempty compact set
with $d_H(A,M)\le\Delta$, where
\[
 0\le\Delta\le r/4,
 \qquad \kappa\ge\kappa_0,
 \qquad \Delta\sqrt\kappa\le1.
\]
Define
\[
 \nu_{A,\kappa}(dx)
 :=\frac{\mathbf1_K(x)}{Z_{A,\kappa}^K}
 e^{-g(x)-\kappa\Psi_A(x)/2}\,dx.
\]
Here $Z_{A,\kappa}^K$ is the normalizing constant.
There is a fixed-model constant $C_M>0$, independent of $A,\Delta,\kappa$,
such that
\begin{equation}
 W_2(\nu_{A,\kappa},\pi_M)
 \le C_M\left(
 \kappa^{-1/2}+\Delta\sqrt\kappa
 +\kappa^{q/4}e^{-\kappa r^2/64}
 \right).
\label{eq:true-penalty-bound}
\end{equation}
\end{proposition}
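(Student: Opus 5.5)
We split $\nu_{A,\kappa}$ into its mass inside the tube $\Ucal_r(M)$ and its mass outside, then compare the inside part with $\pi_M$ through the nearest-point projection $P_M$. Write $\nu:=\nu_{A,\kappa}$, $T:=\Ucal_{r}(M)$, and let $\nu_T$ denote $\nu$ conditioned on $T$. The triangle inequality gives
\[
 W_2(\nu,\pi_M)\le W_2(\nu,\nu_T)+W_2(\nu_T,(P_M)_\#\nu_T)+W_2((P_M)_\#\nu_T,\pi_M).
\]
The first term is handled by \eqref{eq:w2-tv-coupling}, since $\|\nu-\nu_T\|_{\TV}\le\nu(T^c)$.

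\textbf{Gaussian envelopes.} On $d_H(A,M)\le\Delta$ we have $|d_A-d_M|\le\Delta$, which gives $d_A^2\ge (d_M-\Delta)_+^2$ and $d_A^2\le(d_M+\Delta)^2$. Since $\chi$ is the identity below $r^2$ and at least $r^2$ above it, we get two-sided bounds on $T':=\Ucal_{r/2}(M)$:
\[
 e^{-\kappa(\|v\|+\Delta)^2/2}\le e^{-\kappa\Psi_A/2}\le e^{-\kappa(\|v\|-\Delta)_+^2/2}.
\]
Outside $T'$ the upper bound is $e^{-\kappa (r/4)^2/2}$, using $\Delta\le r/4$.

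\textbf{Tail mass.} Integrating in normal coordinates via Lemma~\ref{lem:adjusted-potential-expansion} (with $J_M$ bounded), the normalizer satisfies $Z^K_{A,\kappa}\gtrsim \kappa^{-q/2}e^{-\kappa\Delta^2}\gtrsim\kappa^{-q/2}$, using $\Delta\sqrt\kappa\le 1$. The numerator outside $T'$ is at most $|K|e^{\osc}e^{-\kappa r^2/32}$. Hence $\nu(T'^c)\lesssim \kappa^{q/2}e^{-\kappa r^2/32}$, and after taking square roots in \eqref{eq:w2-tv-coupling} this gives the term $\kappa^{q/4}e^{-\kappa r^2/64}$. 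I will use $T'$ in place of $T$ throughout.

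\textbf{Normal displacement.} The middle term is at most $(\E_{\nu_T}\|v\|^2)^{1/2}$. The upper envelope together with a Gaussian second moment in $q$ dimensions bounds this by $C(\kappa^{-1/2}+\Delta)$, and $\Delta\le \kappa^{-1/2}$, so the whole term is $\lesssim\kappa^{-1/2}$.

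\textbf{Projected marginal (main step).} The density of $(P_M)_\#\nu_T$ with respect to $\pi_M$ is $h(y)\propto \int_{\|v\|<r/2} e^{-\kappa\Psi_A(y+v)/2}\,e^{-[g(y+v)-g(y)]}J_M(y,v)\,dv$. By Lemma~\ref{lem:adjusted-potential-expansion} the weight equals $1+\ell_y(v)+R_y(v)$. We write the fiber integral as
\[
 I(y)=\int \phi_y(v)\,(1+\ell_y(v)+R_y(v))\,dv,\qquad \phi_y(v):=e^{-\kappa\Psi_A(y+v)/2},
\]
and compare it with the same integral using the symmetric profile $\phi^0(v)=e^{-\kappa\|v\|^2/2}$.

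\begin{itemize}
 \item The constant part: $\int|\phi_y-\phi^0|\,dv$ is at most the gap between the two envelopes. Rescaling $v=u/\sqrt\kappa$ turns this gap into $\kappa^{-q/2}\,O(\Delta\sqrt\kappa)$ uniformly in $y$, using $\Delta\sqrt\kappa\le1$.
 \item The linear part: $\int\phi^0\ell_y=0$ by oddness, and the perturbation contributes $\int|\phi_y-\phi^0|\,|\ell_y|\lesssim\kappa^{-q/2}\Delta\sqrt\kappa\cdot\kappa^{-1/2}$.
 \item The remainder: $\int\phi_y|R_y|\le A_M\int\phi_y\|v\|^2\lesssim\kappa^{-q/2}\kappa^{-1}$.
\end{itemize}
Thus $I(y)=c_\kappa\bigl(1+O(\Delta\sqrt\kappa+\kappa^{-1})\bigr)$ uniformly in $y$, with a common constant $c_\kappa$. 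Normalizing gives $\|h-1\|_\infty\lesssim\Delta\sqrt\kappa+\kappa^{-1}$. The condition $\kappa\ge\kappa_0$ ensures $h\ge 1/2$.

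Lemma~\ref{lem:manifold-density-transport} then yields $W_2((P_M)_\#\nu_T,\pi_M)\lesssim\Delta\sqrt\kappa+\kappa^{-1}$. Summing the three terms gives \eqref{eq:true-penalty-bound}.

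The main obstacle is the uniformity in the constant part of the projected-marginal step. $A$ is arbitrary, so $\phi_y$ is neither symmetric nor smooth, and it must be controlled only through the envelopes. The step I expect to be delicate is showing that the envelope gap integrates to $O(\Delta\sqrt\kappa)$ relative to $c_\kappa$ without losing a dimensional factor. I would do this by expanding $(\|u\|\pm\Delta\sqrt\kappa)^2$ in polar coordinates in the normal fiber.
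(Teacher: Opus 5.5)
Your route is the paper's. You discard the mass outside $\mathcal U_{r/2}(M)$ with the TV--$W_2$ coupling bound, couple $X$ with $P_M(X)$ for the normal displacement, and feed a uniform fiber-weight estimate into Lemma~\ref{lem:manifold-density-transport}. The envelope gap, the cancellation of $\ell_y$ by oddness, and the $A_M\lVert v\rVert^2$ remainder are all handled correctly.

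The gap is the sentence ``$\kappa\ge\kappa_0$ ensures $h\ge1/2$'', together with the normalization just before it. Write $b(y):=I(y)/c_\kappa=1+\epsilon(y)$ with $|\epsilon(y)|\le E:=C(\Delta\sqrt\kappa+\kappa^{-1})$. Then $h=b/\bar b$ with $\bar b:=\int_M b\,d\pi_M$. Both $h\ge1/2$ and $\lVert h-1\rVert_\infty\lesssim E$ need $\bar b$ bounded below, and your perturbative estimate supplies this only when $E$ is small. But $\kappa\ge\kappa_0$ controls only the Taylor part ($A_Mq/\kappa\le1/2$). The envelope part is $C\Delta\sqrt\kappa$ with an uncontrolled constant, and the hypothesis allows $\Delta\sqrt\kappa=1$. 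The claim can in fact fail. Take $q=1$, $\Delta\sqrt\kappa=1$, $\kappa$ large, and let $A$ be the two parallel sheets at normal offset $\pm\Delta$ over most of $M$, plus a $\Delta$-net of points of $M$ on the rest. Then $b\approx(2\pi)^{-1/2}\int_{\mathbb R}e^{-(|u|-1)^2/2}\,du\approx1.68$ on the sheets, while $b\approx e^{-1/2}\approx0.61$ at the deep holes of the net, so $h\approx0.36$ there. Lemma~\ref{lem:manifold-density-transport} needs $h\ge b_0$ with $b_0$ independent of $A,\Delta,\kappa$, so this lower bound must be proved separately.

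The repair is the one the paper uses, and you already have its ingredient from your tail-mass step. Since $\kappa\Delta^2\le1$, the inequalities $(u+\Delta)^2\le2u^2+2\Delta^2$ and $(u-\Delta)_+^2\ge u^2/2-\Delta^2$ give $e^{-1}e^{-\kappa\lVert v\rVert^2}\le e^{-\kappa\Psi_A(y+v)/2}\le e^{1/2}e^{-\kappa\lVert v\rVert^2/4}$ on every fiber. Combined with the Gibbs-weighted Jacobian being bounded above and below, this yields $c\le b(y)\le C$ uniformly in $y,A,\Delta,\kappa$. Then $h\ge c/C$, and $|h-1|\le(|b-1|+|\bar b-1|)/\bar b\le C(\Delta\sqrt\kappa+\kappa^{-1})$ with no smallness needed. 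Alternatively, since everything lives on $K$ and $W_2\le L_K$, you may assume $E\le1/2$ at the cost of the constant; your argument then gives $h\ge1/3$. By contrast, the envelope-gap integral you flagged as delicate is routine: its derivative in $a=\Delta\sqrt\kappa$ is bounded on $[0,1]$ by a multiple of $\int_0^\infty u(u+1)^{q-1}e^{-u^2/2}\,du$, and $q$ is fixed.
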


The following geometric observation supplies the fixed ambient enclosure
used in the main theorem.
\begin{lemma}[Fixed enclosure of the relevant distance neighborhoods]
\label{lem:common-tubes-on-event}
Suppose Assumptions~\ref{ass:true-manifold}
and~\ref{ass:ambient-envelope} hold. For every realization of the noiseless
cloud and its reconstructed set,
\[
  2r<\Reach(M),
\]
and
\begin{equation}
  \overline{\cU_{2r}(M)}
  \cup
  \overline{\cU_{2r}(\wideM)}
  \subset\operatorname{int}(K).
\label{eq:automatic-complete-tubes}
\end{equation}
In particular, $M\cup\wideM\subset\operatorname{int}(K)$.
\end{lemma}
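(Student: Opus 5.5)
The plan is a direct triangle-inequality argument based on the containments established when $\wideM$ was defined. There are two claims to prove.

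\emph{First claim.} The inequality $2r<\Reach(M)$ follows at once from the standing choice $0<2r<\tau_0$ together with Assumption~\ref{ass:true-manifold}, which gives $\Reach(M)\ge\tau_0$. It holds for every realization because it involves no data.

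\emph{Containment for $M$.} I will use that $K=\overline B(z_0,R_0+3r)$, so its interior is the open ball $B(z_0,R_0+3r)$. Take $x\in\overline{\cU_{2r}(M)}$. Then $d_M(x)\le 2r$, since $d_M$ is continuous and so the closure of $\{d_M<2r\}$ lies in $\{d_M\le2r\}$. Because $M$ is compact, there is $y\in M$ with $\|x-y\|=d_M(x)\le2r$. Assumption~\ref{ass:ambient-envelope} gives $\|y-z_0\|\le R_0$, and therefore
\[
\|x-z_0\|\le R_0+2r<R_0+3r.
\]
So $x\in\operatorname{int}(K)$.

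\emph{Containment for $\wideM$.} I will use the chain $\mathcal X_N\subseteq\wideM\subseteq\operatorname{conv}(\mathcal X_N)\subseteq\overline B(z_0,R_0)$ recorded after the definition of $\wideM$. This chain holds on every outcome, including the degenerate conventions $\operatorname{Conv}(\infty,S)=\operatorname{conv}(S)$. The reason is that each $\operatorname{conv}(\sigma)$ with $\sigma\subseteq\mathcal X_N$ lies in $\operatorname{conv}(\mathcal X_N)$. Moreover, $\mathcal X_N\subset M\subseteq\overline B(z_0,R_0)$, since the observations lie on $M$, and the closed ball is convex. I should check that $\mathcal X_N\subset M$ holds surely and not merely almost surely. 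If needed, I will say ``for every realization of the cloud in $M$'', which is the noiseless setting. Since $\wideM$ is a nonempty compact set, the argument from the previous step repeats verbatim: every $x\in\overline{\cU_{2r}(\wideM)}$ lies within distance $2r$ of some point of $\wideM\subseteq\overline B(z_0,R_0)$, hence $\|x-z_0\|\le R_0+2r<R_0+3r$.

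\emph{Final assertion and main obstacle.} The statement $M\cup\wideM\subset\operatorname{int}(K)$ follows because each set lies inside its own closed $2r$-tube. I do not expect a genuine obstacle. The only points needing care are: that the closure of the open tube lies in the closed distance sublevel set; that the infimum defining the distance is attained, which uses compactness; and that $\wideM\subseteq\overline B(z_0,R_0)$ holds on all outcomes. The last point is the one I would verify most carefully, by unwinding the finite-union-of-convex-hulls definition of $\wideM$.
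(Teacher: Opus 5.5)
Your proposal is correct and follows essentially the same route as the paper: $2r<\tau_0\le\Reach(M)$ from the choice of $r$, then $M,\wideM\subseteq\overline B(z_0,R_0)$ on every realization, then the triangle-inequality bound $\|x-z_0\|\le R_0+d_A(x)\le R_0+2r<R_0+3r$ for $A\in\{M,\wideM\}$. One small simplification is available: you do not need compactness to attain the distance, because taking the infimum over $y\in A$ in $\|x-z_0\|\le\|x-y\|+R_0$ already gives the bound.
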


The terms in \eqref{eq:main-w2-bound} represent, respectively,
normal displacement, the distortion of the projected density caused
by reconstructing $M$, and exponentially small mass outside the true
normal tube. The true-manifold fiber bias is of order $\kappa^{-1}$
and has been absorbed into the normal-displacement term.

\begin{corollary}[Small-error rate]
\label{cor:optimized}
Under the assumptions of Theorem~\ref{thm:main-w2}, choose
$\kappa=\Delta_N^{-1}$ and suppose
$0<\Delta_N\le\min\{1,r/4,\kappa_0^{-1}\}$.
Then, on $\cE_N$,
\begin{equation}
 W_2(\whatnu,\pi_M)=O(\Delta_N^{1/2}).
\label{eq:Delta-half}
\end{equation}
\end{corollary}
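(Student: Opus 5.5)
The plan is to substitute $\kappa=\Delta_N^{-1}$ into the bound \eqref{eq:main-w2-bound} of Theorem~\ref{thm:main-w2} and then estimate each of the three terms. Before that, we have to check that this choice of $\kappa$ satisfies the hypotheses of Theorem~\ref{thm:main-w2}.

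\textbf{Hypotheses.} The assumption $\Delta_N\le\kappa_0^{-1}$ gives $\kappa=\Delta_N^{-1}\ge\kappa_0$. The assumption $\Delta_N\le r/4$ is required directly. The condition $\Delta_N\sqrt\kappa=\Delta_N^{1/2}\le1$ follows from $\Delta_N\le1$. Theorem~\ref{thm:main-w2} therefore applies on $\cE_N$ and yields
\[
 W_2(\whatnu,\pi_M)\le C_M\left(\Delta_N^{1/2}+\Delta_N^{1/2}+\Delta_N^{-q/4}e^{-r^2/(64\Delta_N)}\right).
\]

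\textbf{The exponential term.} The first two terms are already $O(\Delta_N^{1/2})$, so it remains to show
\[
 \Delta_N^{-q/4}e^{-r^2/(64\Delta_N)}\le C'\Delta_N^{1/2}
\]
for a constant $C'$ depending only on $q$ and $r$. Put $t:=1/\Delta_N\ge1$. The claim becomes
\[
 \sup_{t\ge1} t^{q/4+1/2}e^{-r^2t/64}<\infty.
\]
This holds because a polynomial times a decaying exponential is bounded on $[1,\infty)$. Explicitly, the supremum is at most $\bigl(\frac{64(q/4+1/2)}{e r^2}\bigr)^{q/4+1/2}$, which is a fixed-model constant since $q=D-d$ and $r$ are fixed.

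\textbf{Conclusion.} Combining the three estimates gives
\[
 W_2(\whatnu,\pi_M)\le C_M\,(2+C')\,\Delta_N^{1/2}
\]
on $\cE_N$, which is \eqref{eq:Delta-half}. The implied constant does not depend on $N$ or $\Delta_N$.

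There is no substantive obstacle here. The only step that needs any care is the exponential term: it must be shown to be dominated by $\Delta_N^{1/2}$ uniformly over the whole admissible range of $\Delta_N$, not just asymptotically as $\Delta_N\to0$. The supremum bound over $t\ge1$ handles this.
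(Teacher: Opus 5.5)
Your proposal is correct and follows the paper's proof: check that $\kappa=\Delta_N^{-1}$ meets the hypotheses of Theorem~\ref{thm:main-w2}, substitute into \eqref{eq:main-w2-bound}, and show that the exponential term is dominated by $\Delta_N^{1/2}$. Your explicit bound $\bigl(64(q/4+1/2)/(er^2)\bigr)^{q/4+1/2}$ on $\sup_{t\ge1} t^{q/4+1/2}e^{-r^2t/64}$ is slightly more careful than the paper's remark that this term is $o(\Delta_N^{1/2})$, since it makes the domination uniform over the whole admissible range of $\Delta_N$.
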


\begin{remark}[Rate inherited from reconstruction]\label{remark:rate}
Take
\[
 \Delta_N=C_H\left(\frac{\log N}{N}\right)^{2/d},
 \qquad
 \kappa_N\asymp\left(\frac{N}{\log N}\right)^{2/d}.
\]
Then Proposition~\ref{prop:reconstruction-event} and the calculation in
the proof of Corollary~\ref{cor:optimized}, applied to
\eqref{eq:main-w2-bound}, give
\[
 W_2(\widehat\nu_{N,\kappa_N},\pi_M)
 =O_{\Pp}\left(\left(\frac{\log N}{N}\right)^{1/d}\right).
\]
The same rate holds in expectation. 
\end{remark}

\subsection{Geometric convergence of the random-walk Metropolis-Hastings}
\label{sec:rwmh-geometric-convergence}

We now control the error caused by running the sampler for finitely many
iterations.

\begin{proposition}[Uniform ergodicity of Gaussian RWMH]
\label{prop:rwmh-geometric}
Let $K$ be the closed ball of positive radius used in
\eqref{eq:estimated-penalized}, and suppose
Assumption~\ref{ass:potential-target} holds. Fix $\kappa>0$, $\sigma>0$,
and a nonempty compact realization of $\wideM$, and let
$\underline\varepsilon_{\kappa,\sigma}$ be defined by
\eqref{eq:rwmh-deterministic-minorization}.
Then $0<\underline\varepsilon_{\kappa,\sigma}<1$, and the kernel
$P_{N,\kappa}$ defined in \eqref{eq:rwmh-kernel} satisfies
\begin{equation}
 P_{N,\kappa}(x,A)
 \ge
 \underline\varepsilon_{\kappa,\sigma}
 \frac{\Leb_D(A)}{|K|},
 \qquad x\in K,\quad A\in\mathcal B(K).
 \label{eq:rwmh-uniform-minorization}
\end{equation}
Moreover, $P_{N,\kappa}$ is reversible with respect to $\whatnu$,
and $\whatnu$ is its unique invariant probability measure. For every
initial law $\rho$ on $K$ and every integer $m\ge0$,
\begin{align}
 \lVert\rho P_{N,\kappa}^{m}-\whatnu\rVert_{\TV}
 &\le(1-\underline\varepsilon_{\kappa,\sigma})^m
      \lVert\rho-\whatnu\rVert_{\TV}
 \le(1-\underline\varepsilon_{\kappa,\sigma})^m,
 \label{eq:rwmh-tv-rate}\\
 W_2(\rho P_{N,\kappa}^{m},\whatnu)
 &\le L_K(1-\underline\varepsilon_{\kappa,\sigma})^{m/2}.
 \label{eq:rwmh-w2-rate}
\end{align}
Thus the chain is uniformly geometrically ergodic. 
\end{proposition}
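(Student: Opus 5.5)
The plan is the standard Doeblin argument for a kernel that satisfies a whole-space minorization. The proof has four steps: the bound on $\underline\varepsilon_{\kappa,\sigma}$, the minorization, reversibility with uniqueness, and then the contraction.

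\textbf{Range of $\underline\varepsilon_{\kappa,\sigma}$.} Positivity is immediate, since every factor in \eqref{eq:rwmh-deterministic-minorization} is finite and positive; here $|K|>0$ and $\chi(4r^2)<\infty$. For the strict upper bound $\underline\varepsilon_{\kappa,\sigma}<1$, I would not compute anything. Instead, I will get it for free from the minorization: taking $A=K$ there gives $\underline\varepsilon_{\kappa,\sigma}\le P_{N,\kappa}(x,K)\le 1$. Strictness follows because the Gaussian proposal puts positive mass outside the bounded set $K$, and such moves are rejected. To avoid that subtlety, an alternative is to note directly that
\[
(2\pi\sigma^2)^{-D/2}e^{-L_K^2/(2\sigma^2)}|K|<\int_K q_\sigma(y-x)\,dy<1.
\]

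\textbf{Minorization.} Fix $x\in K$ and a Borel set $A\subseteq K$. Dropping the rejection atom from \eqref{eq:rwmh-kernel} gives
\[
P_{N,\kappa}(x,A)\ge\int_A q_\sigma(y-x)\,a_{N,\kappa}(x,y)\,dy.
\]
For $x,y\in K$ we have $\|y-x\|\le L_K$, so
\[
q_\sigma(y-x)\ge(2\pi\sigma^2)^{-D/2}e^{-L_K^2/(2\sigma^2)}.
\]
For the acceptance probability, I use $g(x)-g(y)\ge-\osc_K(g)$ and $0\le\Psi_{\wideM}\le\chi(4r^2)$. Together these give $\Psi_{\wideM}(y)-\Psi_{\wideM}(x)\le\chi(4r^2)$, and hence
\[
a_{N,\kappa}(x,y)\ge\exp\{-\osc_K(g)-\tfrac\kappa2\chi(4r^2)\}.
\]
Multiplying the two lower bounds yields exactly $\underline\varepsilon_{\kappa,\sigma}\Leb_D(A)/|K|$. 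This is \eqref{eq:rwmh-uniform-minorization}, with the uniform law on $K$ as the small measure.

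\textbf{Reversibility and uniqueness.} Let $\widehat\pi$ denote the unnormalized density of $\whatnu$ on $K$. Detailed balance for the absolutely continuous part reduces to the identity
\[
\widehat\pi(x)q_\sigma(y-x)\min\{1,\widehat\pi(y)/\widehat\pi(x)\}=q_\sigma(x-y)\min\{\widehat\pi(x),\widehat\pi(y)\},
\]
which is symmetric in $(x,y)$ because $q_\sigma$ is symmetric. The diagonal atom is trivially symmetric. Reversibility gives invariance. Uniqueness then follows from the contraction in the next step: if $\nu$ is any invariant law, then $\|\nu-\whatnu\|_{\TV}=\|\nu P^m-\whatnu P^m\|_{\TV}\to0$.

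\textbf{Contraction.} By the minorization, $P=\underline\varepsilon\,U+(1-\underline\varepsilon)Q$, where $U$ is uniform on $K$ and $Q$ is a Markov kernel; the inequality $\underline\varepsilon<1$ is what makes $Q$ well defined. For probability measures $\mu_1,\mu_2$ the $U$ parts cancel, so
\[
\|\mu_1P-\mu_2P\|_{\TV}=(1-\underline\varepsilon)\|\mu_1Q-\mu_2Q\|_{\TV}\le(1-\underline\varepsilon)\|\mu_1-\mu_2\|_{\TV}.
\]
Iterating with $\mu_2=\whatnu$, which is invariant, gives \eqref{eq:rwmh-tv-rate}; the bound $\|\cdot\|_{\TV}\le1$ gives the second inequality there. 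Finally, \eqref{eq:w2-tv-coupling} applies because both laws are supported on $K$ (proposals outside $K$ are rejected, so the chain never leaves $K$). Taking square roots gives \eqref{eq:rwmh-w2-rate}.

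\textbf{Main obstacle.} No step here is a real obstacle. The only delicate points are measurability and making sure the kernel keeps $K$ invariant, which holds because the acceptance probability vanishes off $K$. The nonsmoothness of $\wideM$ never enters, since only the bounds $0\le\Psi_{\wideM}\le\chi(4r^2)$ are used.
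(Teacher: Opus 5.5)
Your proposal is correct and follows the paper's proof essentially step for step. The shared steps are: the uniform-on-$K$ Doeblin minorization from the diameter bound on $q_\sigma$ together with $\osc_K(g)$ and $0\le\Psi_{\wideM}\le\chi(4r^2)$; detailed balance from the symmetry of the $\min$; the split $P_{N,\kappa}=\underline\varepsilon_{\kappa,\sigma}U+(1-\underline\varepsilon_{\kappa,\sigma})Q$ for contraction and uniqueness; and \eqref{eq:w2-tv-coupling} for the $W_2$ bound.

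The one slip is your ``for free'' strictness argument. Since rejected proposals stay at $x\in K$, you have $P_{N,\kappa}(x,K)=1$, so strictness cannot come from $P_{N,\kappa}(x,K)$. It has to come from the accepted-move part, $\underline\varepsilon_{\kappa,\sigma}\le|K|(2\pi\sigma^2)^{-D/2}e^{-L_K^2/(2\sigma^2)}\le\int_Kq_\sigma(y-x)\,dy<1$. That is precisely your alternative, and it is what the paper does.
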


\begin{remark}
  The displayed constant does not depend on the realized reconstruction.
\end{remark}

The proposition holds for every fixed nonempty compact reconstruction independent of its geometric properties. The reconstruction event is needed for the separate comparison of $\whatnu$ with $\pi_M$.

For any desired sampler error $0<\eta<L_K$, the inequality
$1-u\le e^{-u}$ gives the sufficient iteration count
\[
 m\ge\left\lceil
 \frac{2}{\underline\varepsilon_{\kappa,\sigma}}
 \log\frac{L_K}{\eta}
 \right\rceil
 \quad\Longrightarrow\quad
 W_2(\rho P_{N,\kappa}^m,\whatnu)\le\eta.
\]
This establishes geometric convergence for each fixed $\kappa$ and
$\sigma$. The displayed rate is conservative: the reciprocal of
$\underline\varepsilon_{\kappa,\sigma}$ grows exponentially in
$\kappa$ when the other quantities are fixed. No convergence rate
uniform in $\kappa$ or in the ambient dimension is asserted.

\section{Sharpness of the Hausdorff-Based Stationary Rate}

Corollary~\ref{cor:optimized} obtains an error of order
$\Delta_N^{1/2}$ optimising over $\kappa$. The following theorem shows that it cannot be improved by further optimisation. Thus, the performance of the sampler is bottlenecked by the performance of the reconstruction in a tight way.

\begin{theorem}[Sharpness uniformly over the penalty strength]
\label{thm:uniform-penalty-lower-bound}
Consider the fixed compact convex body
\[
 \Omega:=\left\{x\in\R^2:
 \bigl((|x_1|-2)_+\bigr)^4
 +\bigl((|x_2|-1)_+\bigr)^4\le1\right\}.
\]
Its boundary $M:=\partial\Omega$ is a $C^3$ curve satisfying
Assumptions~\ref{ass:true-manifold} and~\ref{ass:ambient-envelope}. It
contains straight edges and curved corners, with curvature bounded below by
a positive constant on a closed portion of a curved corner. Fix $K$, $r$,
and $\chi$ as in
Section~\ref{sec:model}.
For every fixed $\eta\in(0,1/3)$, there are constants $c,C,t_0>0$ such
that, whenever $0<t<t_0$ and a finite observation set
$\mathcal X_N\subset M$ satisfies
\begin{equation}
 d_H(\mathcal X_N,M)\le\eta t,
\label{eq:lower-bound-observation-density}
\end{equation}
the reconstruction
\[
 A_{t,N}:=\operatorname{Conv}(t,\mathcal X_N),
 \qquad
 \Delta:=d_H(A_{t,N},M),
\]
satisfies
\[
 ct^2\le\Delta\le Ct^2 \numberthis\label{eq:lower-bound-basic-geometry-main}
\]
and
\[
 \inf_{\kappa>0}W_2(\nu_{A_{t,N},\kappa},\pi_M)
 \ge ct
 \ge c\sqrt\Delta.
\]
The constants are independent of $t$, $\mathcal X_N$, and $\kappa$.
\end{theorem}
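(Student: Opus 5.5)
The plan is to prove the two-sided geometric bound \eqref{eq:lower-bound-basic-geometry-main} first, by a local computation, and then to split the infimum over $\kappa$ into two regimes. In the first regime normal spreading alone forces error $\gtrsim t$. In the second, the reconstruction $A_{t,N}$ has positive two-dimensional area near the curved corner, and this area distorts the tangential mass distribution by at least $\gtrsim t$.

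\textbf{Step 1: geometry of $A_{t,N}$.} For the upper bound $\Delta\le Ct^2$, take any $\sigma\subseteq\mathcal X_N$ with $\operatorname{rad}(\sigma)\le t$. Its points lie on an arc of $M$ of length $O(t)$, using the positive reach of $M$. Since $\Omega$ is convex and $M$ is $C^2$ with bounded curvature, $\operatorname{conv}(\sigma)$ lies inside $\Omega$ and within $O(t^2)$ of that arc. Conversely, every point of $M$ is within $\eta t$ of $\mathcal X_N\subseteq A_{t,N}$, and $\eta t\le Ct^2$ fails. So I would instead bound $d_H$ by the depth of the chords, using that $d_M(x)$ for $x\in A_{t,N}$ is $O(t^2)$. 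Here I expect the Hausdorff distance to be measured as $\sup_{x\in A}d_M(x)$, with the other direction $\sup_{y\in M}d_A(y)$ also $O(t^2)$ because consecutive observed points at gap $\le 2\eta t$ are joined by chords.

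For the lower bound, fix the closed curved portion $\Gamma\subset M$ with curvature $\ge k_0>0$. Since $\eta<1/3$, every arc of $\Gamma$ of length $t$ contains two observations at distance $\ge t/3$ whose pair has radius $\le t$. The midpoint of their chord has depth $\ge ck_0t^2$ below $M$. This gives $\Delta\ge ct^2$. More importantly it gives the quantitative statement I need later: $A_{t,N}\cap\Omega$ contains, above every arc of $\Gamma$ of length $t$, a planar region (a triangle between the arc and a chord) of area $\ge c t^3$. Hence, with $U_\Gamma:=\cU_r(M)\cap P_M^{-1}(\Gamma)$, the set $A_{t,N}\cap U_\Gamma$ has area $\ge c t^2\,\mathrm{length}(\Gamma)$. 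On the straight edges, by contrast, chords lie on $M$, so there $A_{t,N}$ is a subset of $M$ and has zero area.

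\textbf{Step 2: regime $\kappa\le t^{-2}$.} Coupling any point with a point of $M$ costs at least $d_M(x)^2$, so
\[
W_2(\nu,\pi_M)^2\ge\int d_M^2\,d\nu .
\]
On the straight edges $\Psi_A=d_M^2$ in the normal fibers, so $\nu$ restricted to the edge tubes has a Gaussian normal profile of variance $\asymp\kappa^{-1}$. The edge tubes also carry mass bounded below uniformly in $\kappa$. I would check this using Lemma~\ref{lem:adjusted-potential-expansion} for upper and lower envelopes of the normalizing constant, with the exponentially small part outside the tube handled as in Proposition~\ref{prop:true-penalty}. Together these give $\int d_M^2\,d\nu\ge c\kappa^{-1}\ge ct^2$, so $W_2\ge ct$.

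\textbf{Step 3: regime $\kappa\ge t^{-2}$, the main obstacle.} Here I would use the $1$-Lipschitz test function $f=\phi\circ P_M$ on $\cU_r(M)$, suitably extended, with $\phi$ a Lipschitz bump that is $1$ on $\Gamma$ and $0$ on the straight edges. Kantorovich duality then gives
\[
W_2\ge W_1\ge c\,|\nu(U_\Gamma)-\pi_M(\Gamma)| ,
\]
up to the contribution of the transition region. I need to estimate the unnormalized fiber masses. On the edges, each point of $M$ carries fiber mass $\asymp\kappa^{-1/2}$. Over $\Gamma$, the fiber mass is at least $\kappa^{-1/2}$ plus the area of $A\cap\{$fiber$\}$ weighted by $e^{-\kappa\cdot 0}=1$, which on average is $\gtrsim t^2$ per unit length by Step 1. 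The relative excess weight of $\Gamma$ is therefore $\gtrsim \min\{1,t^2\kappa^{1/2}\}\ge t$, since $\kappa^{1/2}\ge t^{-1}$. After normalization this forces $|\nu(U_\Gamma)-\pi_M(\Gamma)|\ge ct$.

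The delicate points are twofold. First, the $O(\kappa^{-1/2}\cdot\kappa^{-1/2})$ first-order Jacobian corrections and the $e^{\osc g}$ factors must not cancel this excess. This needs the first-order term $\ell_y$ to integrate to zero and the remainder $A_M\|v\|^2$ to be $O(\kappa^{-1})\ll t\kappa^{-1/2}$. Second, the mass of $A$ near the corners of the curved part must be controlled by choosing $\phi$ supported well inside $\Gamma$. Combining Steps 2 and 3 gives $\inf_\kappa W_2\ge ct\ge c\sqrt{\Delta/C}$.
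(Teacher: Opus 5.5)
Your two-regime architecture matches the paper's Steps 3--4: normal spread near a straight edge gives $W_2\gtrsim\kappa^{-1/2}$, a tangential mass distortion gives $W_1\gtrsim t^2/(\kappa^{-1/2}+t^2)$, and the larger of the two is $\gtrsim t$. Step 1 is also essentially the paper's, and your chord-midpoint argument is a simpler route to $\Delta\ge ct^2$. The gap is in Step 3, and it has two parts.

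\textbf{The test function is in the wrong place.} Write the fiber mass over $y$ as $m(y)\approx\sqrt{2\pi}\,s+h(y)$, with $s=\kappa^{-1/2}$ and $h(y)$ the thickness of $A_{t,N}$ on that fiber. You then need
\[
 \nu(U_\Gamma)-\pi_M(\Gamma)\approx
 \frac{V_M\int_\Gamma h-\lvert\Gamma\rvert\int_M h}{V_M\int_M m}
\]
to be of size $t^2/(s+t^2)$. This compares the average thickness over $\Gamma$ with the average over all of $M$, not with the straight edges. The other curved corners, and the low-curvature ends of the first corner, also carry thickness of order $t^2$. Your bounds give average $h\ge ct^2$ on $\Gamma$ and $h\le Ct^2$ everywhere, with unrelated constants. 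They do not exclude an observation set (sparse near $\Gamma$, dense on the other corners) for which this numerator is $o(t^2)$ or zero.

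Put the test function where the sign is forced instead. On the interior of a straight edge, $d_{A_{t,N}}=d_M=\lvert x_2\rvert$ exactly, so there the unnormalized density of $\nu$ equals the true-manifold one. The deficit $\frac{\rho}{V_M}\bigl(1-\mathcal Z_M(s)/\mathcal Z_{A_{t,N}}(s)\bigr)$ then comes purely from the normalizer; this is the paper's choice. Taking $\phi\equiv1$ on all curved corners would also work.

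\textbf{Your tools cannot resolve the excess.} Even with the right test function, you need $ct^2\le\mathcal Z_{A_{t,N}}(s)-\mathcal Z_M(s)\le Ct^2$. For $t^2\le s\le t$ this is an excess of order $t^2$ per unit length on a background of order $s$. The Gaussian envelopes of Proposition~\ref{prop:true-penalty} fix each fiber mass only up to $C\Delta\kappa^{-(q-1)/2}=C\Delta\asymp t^2$ when $q=1$. That uncertainty is the same order as the effect you want to detect.

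This is structural, not a matter of constants. Between observations $A_{t,N}$ lies strictly inside $\Omega$, so the outer Gaussian tail on each fiber is shifted inward by up to $O(\epsilon^2)$, which can be of order $t^2$. That loss is recovered only through the weight-one strip between $P=\operatorname{conv}(\mathcal X_N)$ and $M$, and envelopes cannot see this cancellation. The Jacobian corrections you flag are harmless by comparison, at $O(s^3+st^2+t^4)$ per unit length.

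The paper gets the exact comparison from the coarea formula and the decomposition $A_{t,N}=P\setminus\operatorname{int}(Q)$ with $P,Q$ convex. Steiner-type perimeter formulas then show that $\{d_{A_{t,N}}=u\}$ has total length in $[2V_M-Ct^2,2V_M]$ for $0<u\le 2r$. So the inward shift costs only $O(st^2)$.

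A minor point: your Step 2 tube argument needs $\kappa\ge\kappa_0$. Bounded $\kappa$ is easy to handle directly, since the density is then bounded below on $K$.
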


The exponent $1/2$ in the Hausdorff-based stationary approximation
bound is therefore sharp for our family of samplers for every choice of $\kappa$. Like is usual with lower bounds, the proof of this Theorem is quite technical, and is therefore relegated to Section \ref{sec:prf-lowerbound}.


\section{Example: A Gibbs law on a torus}
\label{sec:examples}

\begingroup
\renewcommand{\theHfigure}{torus.\arabic{figure}}
\refstepcounter{figure}
\label{fig:torus-reach}
\begin{center}
 \begin{tikzpicture}[
   x=0.9cm,
   y=0.9cm,
   line cap=round,
   line join=round,
   every node/.style={font=\small}
 ]
   \path[fill=blue!24,draw=blue!58!black,line width=0.9pt,even odd rule]
     (0,0) ellipse (3.65 and 1.55)
     (0,0) ellipse (1.55 and 0.58);
   \draw[blue!58!black,line width=0.8pt] (0,0) ellipse (3.65 and 1.55);
   \draw[blue!58!black,line width=0.8pt] (0,0) ellipse (1.55 and 0.58);
   \draw[blue!42,densely dotted] (0,0.43) ellipse (2.55 and 0.82);
   \draw[blue!42,densely dotted] (0,-0.43) ellipse (2.55 and 0.82);
   \draw[black!55,dashed,line width=0.8pt] (0,0) ellipse (2.55 and 0.86);

   \path[fill=orange!48,draw=orange!75!black,line width=1.0pt]
     (0,0) ellipse (1.55 and 0.58);

   \fill (0,0) circle (2.1pt);
   \draw[black,line width=1.0pt] (0,0)--(1.55,0);
   \node[above] at (0.78,0) {$1$};
   \node[above left=-1pt] at (0,0) {$c$};
   \node[blue!58!black] at (-3.05,1.72) {$M$};
   \node[orange!75!black,align=center] at (0,-0.95) { };
 \end{tikzpicture}
 \par\smallskip
 \begin{minipage}{0.94\linewidth}
 \small
 \textbf{Figure~\thefigure:}
 A schematic of the ring torus used in the example, with major radius $3$
 and minor radius $1$. The orange circle is a flat disk of radius $1$ in
 the equatorial plane, drawn to cover the displayed central opening. It diagrametically
 depicts the reach $\tau_0=1$. We mathematically verify it below.
 \end{minipage}
\end{center}
\endgroup

Consider the ring torus with major radius $3$ and minor radius $1$,
parametrized by
\begin{equation}
 F(\theta,\phi)
 :=\bigl((3+\cos\theta)\cos\phi,
         (3+\cos\theta)\sin\phi,
         \sin\theta\bigr),
 \qquad (\theta,\phi)\in[0,2\pi)^2,
\label{eq:torus-parametrization}
\end{equation}
and let $M:=F([0,2\pi)^2)$. Take the ambient potential
\begin{equation}
 g(x):=x_3,
 \qquad x=(x_1,x_2,x_3)\in\R^3.
\label{eq:torus-potential}
\end{equation}
The target in \eqref{eq:true-target} is therefore
\[
 \pi_M(dy)=Z_M^{-1}e^{-y_3}\,\Vol_M(dy).
\]
It favors the lower part of the torus, where $y_3<0$, while remaining
strictly positive on all of $M$.

For a completely explicit cap, let
\[
 w(t):=1-3t^2+2t^3,
 \qquad 0\le t\le1,
\]
and define
\[
 \chi(s):=
 \begin{cases}
  s, &0\le s\le r^2,\\[1mm]
  \displaystyle
  r^2+3r^2\int_0^{(s-r^2)/(3r^2)}w(t)\,dt,
       &r^2<s<4r^2,\\[3mm]
  \displaystyle\frac52r^2, &s\ge4r^2.
 \end{cases}
\]

\begin{proposition}[Torus specialization of Corollary~\ref{cor:rwmh-expected-rate}]
\label{prop:torus-corollary}
For the torus $M$, potential $g$, and cap $\chi$ defined above, we have 
for every initial law $\rho$ on $K$,
\[
 \E\left[W_2\left(
 \rho P_{N,\epsilon_{\mathrm{tar}}^{-2}}^{m_N},\pi_M
 \right)\right]
 =O(\epsilon_{\mathrm{tar}}),
\]
with the sample-size scaling
$N=\widetilde O(\epsilon_{\mathrm{tar}}^{-2})$;
the implicit constant being independent of $N$ and
$\epsilon_{\mathrm{tar}}$.We also have $\sigma>0$ and a target accuracy satisfies
\begin{equation}
 0<\epsilon_{\mathrm{tar}}
 \le(162e^{1/4})^{-1/2}\approx0.0693,
\label{eq:torus-target-range}
\end{equation}
with
\[
 \Delta_N=\epsilon_{\mathrm{tar}}^2,
 \qquad
 \kappa_N=\epsilon_{\mathrm{tar}}^{-2},
 \qquad
 \alpha_N=\epsilon_{\mathrm{tar}}.
\]
Suppose also that the reconstruction sample-size condition
\eqref{eq:reconstruction-sample-size} holds. Since $d=2$, this is simply
\eqref{eq:reconstruction-sample-size} with $\beta_d=2$.
Finally,
\begin{equation}
 \begin{aligned}
 m_N\ge\Bigg\lceil
 &\frac{96}{6859\pi}(2\pi\sigma^2)^{3/2}
 \exp\left\{
  \frac{361}{8\sigma^2}+\frac{19}{2}
  +\frac{5}{64\epsilon_{\mathrm{tar}}^2}
 \right\}\\
 &\hspace{28mm}\times
 \log\left(\frac{19}{2\epsilon_{\mathrm{tar}}}\right)
 \Bigg\rceil.
 \end{aligned}
\label{eq:torus-iteration-count}.
\end{equation}

\end{proposition}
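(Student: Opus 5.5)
The plan is to obtain the statement as a direct instance of Corollary~\ref{cor:rwmh-expected-rate}. We check that the torus satisfies all the hypotheses with explicit constants, and then evaluate $\kappa_0$, $\underline\varepsilon_{\kappa,\sigma}$ and $L_K$ in closed form. Nothing new is proved; the content is the bookkeeping, and the main risk is an arithmetic slip rather than a conceptual gap.

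\textbf{Step 1: Assumptions.} $M=F([0,2\pi)^2)$ is a compact, connected, smooth embedded $2$-manifold in $\R^3$, so $d=2$ and $q=1$. For the reach, a point $x$ with distance $\rho$ from the $x_3$-axis has distance $\bigl((\rho-3)^2+x_3^2\bigr)^{1/2}$ to the core circle, and $d_M(x)$ equals the absolute difference between this quantity and $1$. The nearest point is therefore unique unless $x$ lies on the core circle. Any point on the axis is at distance at least $\sqrt{10}-1>1$ from $M$, so that degeneracy does not enter the tube. The principal curvatures are $1$ and $\cos\theta/(3+\cos\theta)$, the latter of absolute value at most $1/2$. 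Hence the largest curvature is $1$, the medial set is the core circle at distance $1$, and $\tau_0=1$. Since $\|F\|\le 4$, Assumption~\ref{ass:ambient-envelope} holds with $z_0=0$ and $R_0=4$.

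\textbf{Step 2: Choice of $r$ and $K$, and the resulting constants.} Take $r=1/4$, so that $2r<\tau_0$. Then $K=\overline B(0,19/4)$, $L_K=19/2$, and $|K|=\tfrac43\pi(19/4)^3=6859\pi/48$. The displayed $\chi$ is $C^2$ and nondecreasing, equals $s$ on $[0,r^2]$, and is constant for $s\ge4r^2$. Its value there is $r^2+3r^2\int_0^1 w=\tfrac52 r^2=5/32$, which gives \eqref{eq:cap-inner}--\eqref{eq:cap-lipschitz}. For $g(x)=x_3$ we get $G=1$, $H=0$ and $\osc_K(g)=19/2$. Then $C_2=4\cdot3\cdot(9/4)=27$, so $A_M=27e^{1/4}(0+1+1+1)=81e^{1/4}$, and $\kappa_0=\max\{1,128,162e^{1/4}\}=162e^{1/4}$. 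The requirement $\kappa_N=\epsilon_{\mathrm{tar}}^{-2}\ge\kappa_0$ is exactly \eqref{eq:torus-target-range}. This range also gives $\epsilon_{\mathrm{tar}}^2\le r/4=1/16$ and $\epsilon_{\mathrm{tar}}<\min\{1,L_K\}$, so all hypotheses of Corollary~\ref{cor:rwmh-expected-rate} are met.

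\textbf{Step 3: Iteration count and sample size.} Substitute into \eqref{eq:rwmh-deterministic-minorization}, using $L_K^2/(2\sigma^2)=361/(8\sigma^2)$ and $\tfrac{\kappa_N}{2}\chi(4r^2)=5/(64\epsilon_{\mathrm{tar}}^2)$. This gives
\[
\frac{2}{\underline\varepsilon_{\kappa_N,\sigma}}=\frac{96}{6859\pi}(2\pi\sigma^2)^{3/2}\exp\Bigl\{\tfrac{361}{8\sigma^2}+\tfrac{19}{2}+\tfrac{5}{64\epsilon_{\mathrm{tar}}^2}\Bigr\},
\]
and with $\log(L_K/\epsilon_{\mathrm{tar}})=\log(19/(2\epsilon_{\mathrm{tar}}))$ this reproduces \eqref{eq:torus-iteration-count} as an instance of \eqref{eq:rwmh-optimized-iterations}. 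Corollary~\ref{cor:rwmh-expected-rate} then yields the $O(\epsilon_{\mathrm{tar}})$ expected bound. For $d=2$ we have $\beta_2=2$, and the condition \eqref{eq:reconstruction-sample-size} reads $C_H\log N/N\le\epsilon_{\mathrm{tar}}^2$ and $C_P(\log N)^2/N^2\le\epsilon_{\mathrm{tar}}$. The first is satisfied once $N\gtrsim\epsilon_{\mathrm{tar}}^{-2}\log(1/\epsilon_{\mathrm{tar}})$, and the second is then automatic. This gives $N=\widetilde O(\epsilon_{\mathrm{tar}}^{-2})$.

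The only step needing real care is the reach computation in Step~1. I would handle it explicitly through the distance-to-core-circle formula rather than only through the curvature bound, since the curvature bound alone gives only an upper bound on the reach. Everything else is arithmetic.
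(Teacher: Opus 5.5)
Your proposal is correct and follows the paper's proof essentially line by line. Both verify Assumptions~\ref{ass:true-manifold}--\ref{ass:potential-target} and the cap conditions, compute $|K|=6859\pi/48$, $L_K=19/2$, $A_M=81e^{1/4}$, $\kappa_0=162e^{1/4}$ and $\underline\varepsilon_{\kappa,\sigma}$, and then invoke Corollary~\ref{cor:rwmh-expected-rate}. The only variation is that you certify $\Reach(M)=1$ through $d_M=|d_C-1|$ for the core circle $C$, whereas the paper uses injectivity of the normal map for $|t|<1$; both are valid.

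One harmless slip: points on the $x_3$-axis are at distance at least $2$ from $M$ (attained at the origin), not $\sqrt{10}-1$. Your argument only needs this distance to exceed $1$, so nothing breaks.
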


\section{Conclusion}

The present draft focuses on establishing the validity of sampling from
reconstructed constraints. A fuller treatment develops complementary
conductance-based mixing-time bounds for the RWMH chain under additional
conditions. We defer their precise statements, constants, and proofs to the
full submission, since they are not needed for the core approximation
guarantee sought here.

Beyond this, two directions remain open. First, we seek central limit
theorems and consistent Monte Carlo error estimates for parameter inference
on reconstructed constraints, adapting general MCMC limit theory and
batch-means methods~\citep{roberts1996geometric,flegal2010batch}. Second, it
would be useful to determine the optimal scaling of the Gaussian proposal
variance with the penalty strength, the ambient and intrinsic dimensions,
and the reconstruction error. Such a result would quantify the
acceptance--exploration trade-off as the target concentrates near the
reconstructed set and connect our setting to classical diffusion-limit
analyses of random-walk Metropolis algorithms~\citep{roberts1997weak,roberts2001optimal}.

\appendix

\section{Appendix}

\label{sec:proofs}

\subsection{Proof of Proposition~\ref{prop:reconstruction-event}}

\label{sec:reconstruction-proof}

\begin{proof}
Throughout this proof, $C>0$ may change from line to line and depend only
on the fixed model and on $\lambda$.
The sampling density with respect to $\Vol_M$ equals $1/V_M$, so it has
positive finite lower and upper bounds. Thus the fixed
law belongs to the model used by \citet{divol2021}, with reach lower bound
$\tau_0$. Let $t^*(\mathcal X_N)$ be as defined in their equation~(3.4).
$M$ and $t^*(\mathcal X_N)$ are not used by the
reconstruction algorithm. They are required only for the purpose of the proof.

By \citet[Proposition~3.9, with parameter $a=3$]{divol2021}, for all
sufficiently large $N$,
\[
 \Pp\left\{
 d_H(\mathcal X_N,M)>C\left(\frac{\log N}{N}\right)^{1/d}
 \right\}
 \le C\frac{(\log N)^{d-1}}{N^2}.
\]
By \citet[Theorem~4.6(1), with $b=2$]{divol2021}, the restriction
$\lambda<3^{-1/d}$ implies that, outside an event of probability at most
$C(\log N)^{\beta_d}/N^2$,
\[
 t^*(\mathcal X_N)
 \le t_\lambda(\mathcal X_N)
 \le\frac{t^*(\mathcal X_N)}{\lambda}
 \left[
  1+C\left(\frac{(\log N)^2}{N}\right)^{1/d}
 \right].
\]
On the intersection of these two events, take $N$ large enough that
$d_H(\mathcal X_N,M)<\tau_0/8$ and the last bracket is at most $2$.
\citet[Proposition~3.4]{divol2021} then gives
\[
 \begin{aligned}
 d_H(\mathcal X_N,M)
 &\le C\left(\frac{\log N}{N}\right)^{1/d},\\
 t^*(\mathcal X_N)
 &\le2d_H(\mathcal X_N,M),\\
 t^*(\mathcal X_N)
 &\le t_\lambda(\mathcal X_N)
 \le\frac{2}{\lambda}t^*(\mathcal X_N).
 \end{aligned}
\]
In particular, the selected scale is finite and tends to zero, so all the
small-scale requirements in the cited results hold after increasing the
fixed threshold $N_0$.

For $t_\lambda(\mathcal X_N)>t^*(\mathcal X_N)$,
\citet[Lemma~3.3]{divol2021} directly yields
\[
 d_H(\wideM,M)
 \le\frac{t_\lambda(\mathcal X_N)^2}{\Reach(M)}
 \le\frac{t_\lambda(\mathcal X_N)^2}{\tau_0}.
\]
The same conclusion holds at the possible endpoint
$t_\lambda(\mathcal X_N)=t^*(\mathcal X_N)$. Indeed, for a finite cloud and
a finite selected scale, $\operatorname{Conv}(t,\mathcal X_N)$ is unchanged
for $t$ just to the right of the selected critical radius. Apply
Lemma~3.3 at such scales and let them decrease to
$t_\lambda(\mathcal X_N)$. Therefore,
\[
 d_H(\wideM,M)
 \le\frac{t_\lambda(\mathcal X_N)^2}{\tau_0}
 \le C_H\left(\frac{\log N}{N}\right)^{2/d}.
\]
A union bound, using $d-1\le\beta_d$, proves
\eqref{eq:divol-reconstruction-tail}. The two achieved-error inequalities
in \eqref{eq:reconstruction-sample-size} then imply
\eqref{eq:event-probability}.
\end{proof}

\subsection{Proof of Theorem~\ref{thm:rwmh-end-to-end}}

\begin{proof}
Proposition~\ref{prop:rwmh-geometric} gives uniqueness of the invariant
law and, for every $\rho$ and $m$,
\[
 W_2(\rho P_{N,\kappa}^m,\whatnu)
 \le L_K(1-\underline\varepsilon_{\kappa,\sigma})^{m/2}.
\]
On $\cE_N$, the triangle inequality and Theorem~\ref{thm:main-w2} give
\[
 W_2(\rho P_{N,\kappa}^m,\pi_M)
 \le W_2(\rho P_{N,\kappa}^m,\whatnu)
    +W_2(\whatnu,\pi_M),
\]
and substitution of the two established bounds yields
\eqref{eq:rwmh-end-to-end}. The probability statement follows from
Proposition~\ref{prop:reconstruction-event}.
\end{proof}

\subsection{Proof of Corollary~\ref{cor:rwmh-expected-rate}}

\begin{proof}
The inequality $1-u\le e^{-u}$ and
\eqref{eq:rwmh-optimized-iterations} give
\[
 L_K(1-\underline\varepsilon_{\kappa_N,\sigma})^{m_N/2}
 \le\epsilon_{\mathrm{tar}}.
\]
The analysis tolerance is $\Delta_N=\epsilon_{\mathrm{tar}}^2$, and hence
$\kappa_N=\epsilon_{\mathrm{tar}}^{-2}=\Delta_N^{-1}$.
Corollary~\ref{cor:optimized} therefore gives
\[
 W_2(\whatnu,\pi_M)=O(\epsilon_{\mathrm{tar}})
 \qquad\text{on }\cE_N.
\]
The triangle inequality therefore yields
\[
 W_2(\rho P_{N,\kappa_N}^{m_N},\pi_M)
 =O(\epsilon_{\mathrm{tar}})
 \qquad\text{on }\cE_N.
\]
For every reconstruction outcome, both measures in this Wasserstein
distance are supported on $K$, so the distance is at most $L_K$.
For $N$ as in the hypothesis of Corollary~\ref{cor:rwmh-expected-rate},
Proposition~\ref{prop:reconstruction-event} gives
$\Pp(\cE_N^c)\le\epsilon_{\mathrm{tar}}$, and consequently
\[
 \E\left[
 W_2(\rho P_{N,\kappa_N}^{m_N},\pi_M)
 \right]
 \le C\epsilon_{\mathrm{tar}}
   +L_K\Pp(\cE_N^c)
 \le(C+L_K)\epsilon_{\mathrm{tar}}
 =O(\epsilon_{\mathrm{tar}}).
\]
Here $L_K=2(R_0+3r)$ is fixed. For fixed model parameters and sufficiently
small $\epsilon_{\mathrm{tar}}$, a sufficiently large multiple of
$\epsilon_{\mathrm{tar}}^{-d}\log(1/\epsilon_{\mathrm{tar}})$ satisfies
the sample-size hypothesis of Corollary~\ref{cor:rwmh-expected-rate}; the
failure-tail condition is then of smaller order. This proves the stated
sufficient observation count $N=\widetilde O(\epsilon_{\mathrm{tar}}^{-d})$.
\end{proof}

\subsection{Proof of Theorem~\ref{thm:main-w2}}

\begin{proof}
Lemma~\ref{lem:common-tubes-on-event} places the true normal tube inside
$K$ on every realization. On $\cE_N$, apply
Proposition~\ref{prop:true-penalty} with $A=\wideM$ and
$\Delta=\Delta_N$. The law $\nu_{A,\kappa}$ is then exactly $\whatnu$,
so \eqref{eq:true-penalty-bound} gives \eqref{eq:main-w2-bound}.
Finally, Proposition~\ref{prop:reconstruction-event} and
\eqref{eq:reconstruction-sample-size} give
$\Pp(\cE_N)\ge1-\alpha_N$ whenever the stated sample-size condition
holds.
\end{proof}

\subsection{Proof of Lemma~\ref{lem:adjusted-potential-expansion}}

\begin{proof}
Since $K$ and $r$ are chosen as above, $r<\tau_0\le\Reach(M)$ and
$\Ucal_r(M)\subset K$. Thus every $x\in\Ucal_r(M)$ has the unique representation
\[
  x=y+v,
  \qquad y=P_M(x)\in M,
  \qquad v\in N_yM,
  \qquad \lVert v\rVert<r.
\]
For $y\in M$, let
$\mathrm{I\,I}_y:T_yM\times T_yM\to N_yM$ denote the second fundamental
form. For $v\in N_yM$, define the shape operator
$S_{y,v}:T_yM\to T_yM$ by
\[
 \langle S_{y,v}u,w\rangle
 :=\langle \mathrm{I\,I}_y(u,w),v\rangle,
 \qquad u,w\in T_yM.
\]
The operator $S_{y,v}$ is self-adjoint and depends linearly on $v$ \citep[eq. (8.4)]{lee2018riemannian}. By the endpoint-map differential formula \citep[Proposition~4.1.8]{palais1988} and change of variables, the Euclidean volume element in these
coordinates is
\begin{equation}
 dx=J_M(y,v)\,dv\,\Vol_M(dy),
 \qquad
 J_M(y,v)=\det(I-S_{y,v}),
\label{eq:tube-jacobian}
\end{equation}
By \citet[Proposition~A.1(i)]{aamari2019reach}, every unit vector
$u\in T_yM$ satisfies
\[
 \lVert\mathrm{I\,I}_y(u,u)\rVert
 \le \frac{1}{\Reach(M)}
 \le \frac{1}{\tau_0}.
\]
Therefore,
\[
 \lVert S_{y,v}\rVert_{\mathrm{op}}
 =\sup_{\lVert u\rVert=1}
   |\langle S_{y,v}u,u\rangle|
 \le \frac{\lVert v\rVert}{\tau_0}.
\]
In particular, $\lVert S_{y,v}\rVert_{\mathrm{op}}<1/2$ because
$\lVert v\rVert<r<\tau_0/2$. 

Let $\lambda_1(y,v),\ldots,\lambda_d(y,v)$ be the eigenvalues of
$S_{y,v}$. Since $S_{y,v}$ is self-adjoint,
\begin{align*}
|\lambda_i(y,v)|\le\lVert v\rVert/\tau_0<1/2\numberthis\label{eq:eigenvalue-bound}  
\end{align*}
for every $i$. Thus
\begin{align*}
 J_M(y,v) & = \prod_{i=1}^d \lp 1-\lambda_i(y,v)\rp\\
 & =1-\operatorname{tr}(S_{y,v})
  +\underbrace{\sum_{k=2}^d(-1)^k
    \sum_{1\le i_1<\cdots<i_k\le d}
    \prod_{j=1}^k\lambda_{i_j}(y,v)}_{\text{remainder}}
\label{eq:jacobian-expansion}\numberthis
\end{align*}
where we have used
$\operatorname{tr}(S_{y,v})=\sum_{i=1}^d\lambda_i(y,v)$.

We now analyse the remainder. Set
$a:=\lVert v\rVert/\tau_0<1/2$. For each $k$, it contains
$\binom{d}{k}$ products of $k$ eigenvalues; by
\eqref{eq:eigenvalue-bound} each product has absolute value at most $a^k$.
Combining this fact with the triangle inequality gives
\begin{align*}
 \bigg|\text{remainder}\bigg|
 &\le \sum_{k=2}^d\binom{d}{k}a^k\\
 &\le4\left[\left(\frac32\right)^d-1-\frac d2\right]a^2
 \le C_da^2
 =C_d\frac{\lVert v\rVert^2}{\tau_0^2}.
\end{align*}
To expand the potential term, define
\[
 h_{y,v}(t):=e^{-[g(y+tv)-g(y)]},
 \qquad 0\le t\le1.
\]
Then $h_{y,v}(0)=1$, and differentiation gives
\begin{align*}
 h_{y,v}'(t)
 &=-\langle\nabla g(y+tv),v\rangle h_{y,v}(t),\\
 h_{y,v}''(t)
 &=\left(
    \langle\nabla g(y+tv),v\rangle^2
    -\langle v,\nabla^2g(y+tv)v\rangle
   \right)h_{y,v}(t).
\end{align*}
Because the segment $\{y+tv:0\le t\le1\}$ lies in $\Ucal_r(M)\subset K$, the definitions of $G$ \eqref{eq:G-def} and mean-value theorem
\begin{align*}
 |g(y+tv)-g(y)|&\le Gr.
\end{align*}
Using this, along with definition of $H$ \eqref{eq:H-def} we get
\begin{align*}
 |h_{y,v}''(t)|
 &\le e^{Gr}(G^2+H)\lVert v\rVert^2.
\end{align*}
Taylor's theorem with integral remainder therefore yields
\begin{equation}
 e^{-[g(y+v)-g(y)]}
 =1-\langle\nabla g(y),v\rangle
 +\underbrace{\int_0^1(1-t)h_{y,v}''(t)\,dt}_{\text{remainder 2}},
\label{eq:potential-expansion}
\end{equation}
and the remainder satisfies
\[
 \left|\int_0^1(1-t)h_{y,v}''(t)\,dt\right|
 \le \frac12e^{Gr}(G^2+H)\lVert v\rVert^2
\].
We now collect the terms that are linear in $v$.  Set
\[
 \ell_y(v)
 :=-\langle\nabla g(y),v\rangle
   -\operatorname{tr}(S_{y,v}),
\]
and define the nonlinear remainder by
\begin{equation}
 R_y(v)
 :=e^{-[g(y+v)-g(y)]}J_M(y,v)-1-\ell_y(v).
\label{eq:Ry-def}
\end{equation}
Equivalently,
\begin{equation}
 e^{-[g(y+v)-g(y)]}J_M(y,v)
 =1+\ell_y(v)+R_y(v).
\label{eq:fiber-expansion}
\end{equation}
To bound $R_y(v)$, expand its definition as
\begin{align*}
 R_y(v)
 ={}&\left[J_M(y,v)-1+\operatorname{tr}(S_{y,v})\right]\\
 &-\langle\nabla g(y),v\rangle\left[J_M(y,v)-1\right]\\
 &+\left[e^{-[g(y+v)-g(y)]}-1
          +\langle\nabla g(y),v\rangle\right]J_M(y,v).
\end{align*}
The first bracket is the remainder in
\eqref{eq:jacobian-expansion}, and the third bracket is remainder~2 in
\eqref{eq:potential-expansion}.  Moreover,
\eqref{eq:eigenvalue-bound} implies
\[
|J_M(y,v)|\le\left(\frac32\right)^d,
\qquad
|J_M(y,v)-1|
 \le d\left(\frac32\right)^{d-1}a
 \le C_d\frac{\lVert v\rVert}{\tau_0}.
\]
The same fixed $C_d=4(d+1)(3/2)^d$ also dominates
$\frac12(3/2)^d$, the coefficient arising from the potential remainder.
Combining these three bounds gives
\[
 |R_y(v)|
 \le C_d\left(
 H+G^2+\frac{G}{\tau_0}+\frac{1}{\tau_0^2}
 \right)e^{Gr+Hr^2/2}\lVert v\rVert^2
 =A_M\lVert v\rVert^2,
\]
where the last equality uses the definition of $A_M$ in
\eqref{eq:AM-def}.
\end{proof}

\subsection{Proof of Lemma~\ref{lem:gibbs-poincare}}

\begin{proof}
Let $\sigma_M:=\Vol_M/V_M$ be normalized Riemannian volume on $M$.
The spectral characterization of the Poincar\'e inequality gives
\begin{equation}
 \int_M\left|\varphi-\int_M\varphi\,d\sigma_M\right|^2d\sigma_M
 \le \frac{1}{\lambda_1(M)}
 \int_M\|\nabla_M\varphi\|^2d\sigma_M.
\label{eq:volume-poincare}
\end{equation}
By \cite{cheeger1970lower} (see also Corollary, \cite{yau1975isoperimetric}), $\lambda_1(M)$ (which is the first non-zero eigenvalue of the Laplace--Beltrami operator of the compact connected Riemannian manifold $M$) is positive.


The rest of the proof is the bounded-perturbation argument of
\citet{holley1987logarithmic}. Since
\[
 \frac{d\pi_M}{d\sigma_M}(y)=\frac{V_Me^{-g(y)}}{Z_M},
 \qquad
 d\sigma_M(y)=\frac{Z_Me^{g(y)}}{V_M}\,d\pi_M(y),
\]
we have
\begin{align*}
 \int_M\left|\varphi-\int_M\varphi\,d\pi_M\right|^2d\pi_M
 &=\inf_{c\in\R}\int_M|\varphi-c|^2d\pi_M\\
 &\le \frac{V_Me^{-\inf_Mg}}{Z_M}
       \int_M\left|\varphi-\int_M\varphi\,d\sigma_M\right|^2
       d\sigma_M\\
 &\le \frac{V_Me^{-\inf_Mg}}{Z_M\lambda_1(M)}
       \int_M\|\nabla_M\varphi\|^2d\sigma_M\\
 &\le \frac{\exp\{\sup_Mg-\inf_Mg\}}{\lambda_1(M)}
       \int_M\|\nabla_M\varphi\|^2d\pi_M.
\end{align*}
Thus one may take
\[
 C_{\mathrm{PI}}
 =\frac{\exp\{\sup_Mg-\inf_Mg\}}{\lambda_1(M)},
\]
which proves the claim.
\end{proof}

\subsection{Proof of Lemma~\ref{lem:manifold-density-transport}}

\begin{proof}
Lemma~\ref{lem:gibbs-poincare} and the Lax--Milgram theorem
\citep[Corollary~5.8]{brezis2011functional} give a mean-zero weak solution
$u$ of
\[
 \int_M\langle\nabla_Mu,\nabla_M\varphi\rangle\,d\pi_M
 =\int_M(h-1)\varphi\,d\pi_M.
\]
Taking $\varphi=u$ and applying Cauchy--Schwarz and the Poincar\'e
inequality yields
\[
 \int_M\|\nabla_Mu\|^2d\pi_M
 \le C_{\mathrm{PI}}\|h-1\|_{L^2(\pi_M)}^2.
\]
For $0\le t\le1$, put
$h_t=1+t(h-1)$ and
$v_t=\nabla_Mu/h_t$.
Since $h_t\ge\min\{1,b\}$, this velocity has finite action.
For every smooth test function $\varphi$,
\[
 \frac{d}{dt}\int_M\varphi h_t\,d\pi_M
 =\int_M\langle\nabla_M\varphi,v_t\rangle h_t\,d\pi_M,
\]
so $(h_t\pi_M,v_t)$ satisfies the continuity equation.
The Benamou--Brenier formula for the intrinsic Riemannian distance gives
\[
 W_{2,\mathrm{intrinsic}}(\mu,\pi_M)^2
 \le\int_0^1\int_M\frac{\|\nabla_Mu\|^2}{h_t}\,d\pi_M\,dt
 \le
 \frac{C_{\mathrm{PI}}}{\min\{1,b\}}
 \|h-1\|_{L^2(\pi_M)}^2.
\]
This proves the result.
\end{proof}

\subsection{Proof of Proposition~\ref{prop:true-penalty}}

\begin{proof}
Throughout this proof, $c,C>0$ may change between displays and depend
only on the fixed model, not on $A,\Delta,\kappa$.

\paragraph{Part I. Inside the tube:}
Let $\nu_{A,\kappa}^{\mathrm{in}}$ be $\nu_{A,\kappa}$ conditioned on
$\cU_{r/2}(M)$, and let
\[
 \mu_\kappa:=(P_M)_\#\nu_{A,\kappa}^{\mathrm{in}}.
\]
This true tube lies in $K$ and has unique normal coordinates.
Also, $d_A(x)\le d_M(x)+\Delta<3r/4$ there, so the cap is inactive.
For $X\sim\nu_{A,\kappa}^{\mathrm{in}}$, the coupling $(X,P_M(X))$ gives
\begin{equation}
 W_2(\nu_{A,\kappa}^{\mathrm{in}},\mu_\kappa)
 \le
 \left(\E_{\nu_{A,\kappa}^{\mathrm{in}}}d_M(X)^2\right)^{1/2}
 \le C\kappa^{-1/2},
\label{eq:projection-coupling}
\end{equation}
where the last inequality is proved below.

Let

\begin{align}
 c_{\kappa,q,r/2}
 &:=\int_{\lVert v\rVert<r/2}
       e^{-\kappa\lVert v\rVert^2/2}\,dv,\label{eq:c-kappa}\\
 a_\kappa(y)
 &:=\int_{\lVert v\rVert<r/2}
 e^{-[g(y+v)-g(y)]}J_M(y,v)
 e^{-\kappa\lVert v\rVert^2/2}\,dv.
\label{eq:fiber-weight}
\end{align}
For the reconstructed set, define
\begin{equation}
 a_{A,\kappa}(y)
 :=\int_{\lVert v\rVert<r/2}
 e^{-[g(y+v)-g(y)]}J_M(y,v)
 e^{-\kappa d_A(y+v)^2/2}\,dv.
\label{eq:estimated-fiber-weight}
\end{equation}
We first expand the reference weight $a_\kappa(y)$ using
Lemma~\ref{lem:adjusted-potential-expansion}.

{\color{black}
Substituting \eqref{eq:taylor-expansion-of-adjustment} into \eqref{eq:fiber-weight} gives
}
\begin{align*}
 a_\kappa(y)
 ={}&c_{\kappa,q,r/2}
 +\int_{\lVert v\rVert<r/2}\ell_y(v)
 e^{-\kappa\lVert v\rVert^2/2}\,dv\\
 &+\int_{\lVert v\rVert<r/2}R_y(v)
 e^{-\kappa\lVert v\rVert^2/2}\,dv.
\end{align*}
The ball $\{v:\lVert v\rVert<r/2\}$ is symmetric, the Gaussian factor is
even, and $\ell_y$ is odd.  Therefore
\[
 \int_{\lVert v\rVert<r/2}\ell_y(v)  e^{-\kappa\lVert v\rVert^2/2}\,dv =0.
\]
Consequently,
\[
 a_\kappa(y)-c_{\kappa,q,r/2}
 =\int_{\lVert v\rVert<r/2}R_y(v)
 e^{-\kappa\lVert v\rVert^2/2}\,dv.
\]
{\color{black}
By Lemma~\ref{lem:adjusted-potential-expansion},
$|R_y(v)|\le A_M\lVert v\rVert^2$. Therefore,
}
\[
 |a_\kappa(y)-c_{\kappa,q,r/2}|
 \le A_M\int_{\lVert v\rVert<r/2}\lVert v\rVert^2
 e^{-\kappa\lVert v\rVert^2/2}\,dv.
\]
Let $Z\sim\mathcal N(0,\kappa^{-1}I_q)$. Dividing by
$c_{\kappa,q,r/2}$ and observing that conditioning on $\lVert Z\rVert<r/2$
can only decrease the second moment gives
\begin{align*}
 \left|\frac{a_\kappa(y)}{c_{\kappa,q,r/2}}-1\right|
 &\le A_M\,
 \frac{\int_{\lVert v\rVert<r/2}\lVert v\rVert^2
 e^{-\kappa\lVert v\rVert^2/2}\,dv}
 {\int_{\lVert v\rVert<r/2}e^{-\kappa\lVert v\rVert^2/2}\,dv}\\
 &=A_M\E\!\left[\lVert Z\rVert^2
 \mid \lVert Z\rVert<r/2\right]
 \le\frac{A_Mq}{\kappa}.
\end{align*}
Thus
\begin{equation}
 \sup_{y\in M}
 \left|\frac{a_\kappa(y)}{c_{\kappa,q,r/2}}-1\right|
 \le\frac{A_Mq}{\kappa}.
\label{eq:fiber-uniformity}
\end{equation}
For $\kappa\ge\kappa_0$, $A_Mq/\kappa\le1/2$.
For $\|v\|<r/2$, Hausdorff accuracy gives
\[
 |d_A(y+v)-\|v\||\le\Delta,
\]
and therefore
\begin{equation}
 e^{-\kappa(\|v\|+\Delta)^2/2}
 \le e^{-\kappa d_A(y+v)^2/2}
 \le e^{-\kappa(\|v\|-\Delta)_+^2/2}.
\label{eq:gaussian-distance-envelopes}
\end{equation}
We use the following elementary radial estimate, where $C_q<\infty$
depends only on $q$:
\[
 \int_{\mathbb R^q}
 \left[
 e^{-(\|z\|-a)_+^2/2}-e^{-(\|z\|+a)^2/2}
 \right]dz
 \le C_q a,\qquad 0\le a\le1.
\]
Indeed, the integral is zero at $a=0$. In polar coordinates its
derivative with respect to $a$ is bounded uniformly on $[0,1]$ by a
constant multiple of
$\int_0^\infty u(u+1)^{q-1}e^{-u^2/2}\,du<\infty$.
The mean-value theorem proves the estimate.
Applying it with $z=\sqrt\kappa\,v$ and $a=\sqrt\kappa\,\Delta$
gives an integrated envelope discrepancy at most
$C\Delta\kappa^{-(q-1)/2}$.
Only these radial envelopes are integrated over the whole normal
space; the actual-distance integrals stay in $\|v\|<r/2$.

The factor $e^{-[g(y+v)-g(y)]}J_M(y,v)$ is bounded above and bounded
away from zero uniformly for $y\in M$, $\|v\|<r/2$.
This follows from $|g(y+v)-g(y)|\le G\|v\|$ and
$(1-\|v\|/\tau_0)^d\le J_M(y,v)\le(1+\|v\|/\tau_0)^d$.
Consequently,
\[
 |a_{A,\kappa}(y)-a_\kappa(y)|
 \le C\Delta\kappa^{-(q-1)/2}.
\]
Since $\kappa r^2\ge8q$,
Markov's inequality for $Z\sim N(0,\kappa^{-1}I_q)$ gives
$\Pr(\|Z\|\ge r/2)\le1/2$. Hence
$c_{\kappa,q,r/2}\asymp\kappa^{-q/2}$.
Combining this with \eqref{eq:fiber-uniformity} yields
\begin{equation}
 \sup_{y\in M}
 \left|\frac{a_{A,\kappa}(y)}{c_{\kappa,q,r/2}}-1\right|
 \le C\left(\kappa^{-1}+\Delta\sqrt\kappa\right).
\label{eq:estimated-fiber-uniformity}
\end{equation}

We also need a uniform positive lower bound, even when the right-hand
side of this last display is not small.
For $u\ge0$, the inequalities
$(u+\Delta)^2\le2u^2+2\Delta^2$ and
$(u-\Delta)_+^2\ge u^2/2-\Delta^2$, together with
$\kappa\Delta^2\le1$, give on the tube
\[
 e^{-1}e^{-\kappa\|v\|^2}
 \le e^{-\kappa d_A(y+v)^2/2}
 \le e^{1/2}e^{-\kappa\|v\|^2/4}.
\]
Integrating these bounds with the uniformly positive, bounded
Gibbs-weighted Jacobian factor gives
\begin{equation}
 c\le\frac{a_{A,\kappa}(y)}{c_{\kappa,q,r/2}}\le C,
 \qquad y\in M.
\label{eq:estimated-fiber-positive}
\end{equation}
For the lower bound, the truncated Gaussian with exponent
$-\kappa\|v\|^2$ also has mass comparable to $\kappa^{-q/2}$.

Every $x\in\Ucal_{r/2}(M)$ has the unique
representation
\[
  x=y+v,
  \qquad y=P_M(x)\in M,
  \qquad v\in N_yM,
  \qquad \lVert v\rVert<r/2.
\]
{\color{black}
The corresponding area formula is
$dx=J_M(y,v)\,dv\,\Vol_M(dy)$.

By the formula for conditional expectation, we obtain, for every measurable $B\subset M$,
}
\[
 \mu_\kappa(B)
 =\frac{
 \displaystyle\int_B\int_{\lVert v\rVert<r/2}
 e^{-g(y+v)}e^{-\kappa d_A(y+v)^2/2}J_M(y,v)
 \,dv\,\Vol_M(dy)}{
 \displaystyle\int_M\int_{\lVert v\rVert<r/2}
 e^{-g(y+v)}e^{-\kappa d_A(y+v)^2/2}J_M(y,v)
 \,dv\,\Vol_M(dy)}.
\]
{\color{black}
For each fixed $y\in M$, factor
\[
 e^{-g(y+v)}=e^{-g(y)}e^{-[g(y+v)-g(y)]}.
\]
}
Therefore, by \eqref{eq:estimated-fiber-weight}, the inner integral satisfies
\begin{align*}
 &\int_{\lVert v\rVert<r/2}
 e^{-g(y+v)}e^{-\kappa d_A(y+v)^2/2}J_M(y,v)\,dv\\
 &\qquad=e^{-g(y)}\int_{\lVert v\rVert<r/2}
 e^{-[g(y+v)-g(y)]}e^{-\kappa d_A(y+v)^2/2}
 J_M(y,v)\,dv\\
 &\qquad=e^{-g(y)}a_{A,\kappa}(y).
\end{align*}
{\color{black}
Substituting this identity into both the numerator and the denominator gives
}
\[
 \mu_\kappa(B)
 =\frac{\displaystyle\int_B e^{-g(y)}a_{A,\kappa}(y)\,\Vol_M(dy)}
 {\displaystyle\int_M e^{-g(y)}a_{A,\kappa}(y)\,\Vol_M(dy)}.
\]
{\color{black}
By comparison,
\[
 \pi_M(B)
 =\frac{\displaystyle\int_B e^{-g(y)}\,\Vol_M(dy)}
 {\displaystyle\int_M e^{-g(y)}\,\Vol_M(dy)}.
\]
}
Hence, writing
\[
 b_\kappa(y):=\frac{a_{A,\kappa}(y)}{c_{\kappa,q,r/2}},
 \qquad
 \overline b_\kappa:=\int_M b_\kappa(y)\,\pi_M(dy),
\]
{\color{black}
we have
\[
 \frac{d\mu_\kappa}{d\pi_M}(y)
 =\frac{b_\kappa(y)}{\overline b_\kappa}.
\]
}
By \eqref{eq:estimated-fiber-positive},
$c\le b_\kappa(y),\overline b_\kappa\le C$.
In particular, $d\mu_\kappa/d\pi_M$ is bounded away from zero uniformly
in $A,\Delta,\kappa$. Moreover,
\[
 \left|
 \frac{b_\kappa(y)}{\overline b_\kappa}-1
 \right|
 \le
 \frac{|b_\kappa(y)-1|+|\overline b_\kappa-1|}
 {\overline b_\kappa}
 \le C\left(\kappa^{-1}+\Delta\sqrt\kappa\right),
\]
where the last inequality follows from
\eqref{eq:estimated-fiber-uniformity}.
Apply Lemma~\ref{lem:manifold-density-transport} with
$h=b_\kappa/\overline b_\kappa$.
Its $L^2(\pi_M)$ error is no larger than the displayed uniform error,
so
\begin{equation}
 W_2(\mu_\kappa,\pi_M)
 \le C\left(\kappa^{-1}+\Delta\sqrt\kappa\right).
\label{eq:projected-marginal}
\end{equation}

We next control the normal displacement. The same normal-coordinate
formula gives
\begin{align}
 &\E_{\nu_{A,\kappa}^{\mathrm{in}}}d_M(X)^2\notag\\
 &\quad=
 \frac{\displaystyle
 \int_M e^{-g(y)}
 \int_{\|v\|<r/2}\|v\|^2
 e^{-[g(y+v)-g(y)]}J_M(y,v)
 e^{-\kappa d_A(y+v)^2/2}\,dv\,\Vol_M(dy)}
 {\displaystyle
 \int_M e^{-g(y)}a_{A,\kappa}(y)\,\Vol_M(dy)}.
\label{eq:normal-moment-ratio}
\end{align}
By \eqref{eq:estimated-fiber-positive}, the denominator is at least
$cZ_M\kappa^{-q/2}$.
The upper Gaussian envelope already established above bounds the
numerator by
\[
 CZ_M\int_{\mathbb R^q}\|v\|^2e^{-\kappa\|v\|^2/4}\,dv
 =
 CZ_M\frac{2q}{\kappa}
 \left(\frac{4\pi}{\kappa}\right)^{q/2}.
\]
Thus
\begin{equation}
 \E_{\nu_{A,\kappa}^{\mathrm{in}}}d_M(X)^2
 \le C\kappa^{-1}.
\label{eq:normal-thickness-bound}
\end{equation}
This proves \eqref{eq:projection-coupling}.
Together with \eqref{eq:projected-marginal}, the triangle inequality
and $\kappa\ge1$ give
\begin{equation}
 W_2(\nu_{A,\kappa}^{\mathrm{in}},\pi_M)
 \le C\left(\kappa^{-1/2}+\Delta\sqrt\kappa\right).
\label{eq:inside-bound}
\end{equation}

\paragraph{Part II. Outside the tube:}
If $x\in K\setminus\cU_{r/2}(M)$, then
\[
 d_A(x)\ge d_M(x)-\Delta\ge r/4.
\]
Monotonicity of $\chi$ and its agreement with the identity on
$[0,r^2]$ imply $\Psi_A(x)\ge r^2/16$ there. Consequently,
\[
 \int_{K\setminus\cU_{r/2}(M)}
 e^{-g(x)-\kappa\Psi_A(x)/2}\,dx
 \le |K|e^{-\inf_Kg}e^{-\kappa r^2/32}.
\]
On the other hand, \eqref{eq:estimated-fiber-positive} gives
\begin{equation}
 Z_{A,\kappa}^K
 \ge
 \int_M e^{-g(y)}a_{A,\kappa}(y)\,\Vol_M(dy)
 \ge cZ_M\kappa^{-q/2}.
\label{eq:penalized-normalizer-lower}
\end{equation}
Dividing yields
\begin{equation}
 \nu_{A,\kappa}(K\setminus\cU_{r/2}(M))
 \le C\kappa^{q/2}e^{-\kappa r^2/32}.
\label{eq:outside-mass}
\end{equation}
The total variation distance between a law and its conditioning on
an event is the probability of the complementary event. Therefore,
the existing maximal-coupling inequality \eqref{eq:w2-tv-coupling}
gives
\begin{equation}
 W_2(\nu_{A,\kappa},\nu_{A,\kappa}^{\mathrm{in}})
 \le L_K
 \sqrt{\nu_{A,\kappa}(K\setminus\cU_{r/2}(M))}
 \le C\kappa^{q/4}e^{-\kappa r^2/64}.
\label{eq:outside-w2}
\end{equation}
Combining this with \eqref{eq:inside-bound}, and taking $C_M$ large
enough to dominate the fixed constants, proves
\eqref{eq:true-penalty-bound}.
\end{proof}

\subsection{Proof of Lemma~\ref{lem:common-tubes-on-event}}

\begin{proof}
Assumption~\ref{ass:true-manifold} and the choice of $r$ give
$2r<\tau_0\le\Reach(M)$. Moreover, the noiseless observations and the
definition of the estimator give
\[
 M\subseteq\overline B(z_0,R_0),
 \qquad
 \wideM\subseteq\operatorname{conv}(\mathcal X_N)
 \subseteq\overline B(z_0,R_0)
\]
on every realization.

Let $A$ denote either $M$ or $\wideM$. If
$x\in\overline{\cU_{2r}(A)}$, then $d_A(x)\le2r$. Since
$A\subseteq\overline B(z_0,R_0)$, the definition of distance gives
\[
 \lVert x-z_0\rVert
 \le R_0+d_A(x)
 \le R_0+2r<R_0+3r.
\]
Thus $\overline{\cU_{2r}(A)}\subset\operatorname{int}(K)$ for both choices
of $A$, proving \eqref{eq:automatic-complete-tubes}. In particular,
$M\cup\wideM\subset\operatorname{int}(K)$.
\end{proof}

\subsection{Proof of Corollary~\ref{cor:optimized}}

\begin{proof}
These choices imply $\kappa\ge\kappa_0$ and
$\Delta_N\sqrt\kappa=\sqrt{\Delta_N}\le1$.
Substituting into \eqref{eq:main-w2-bound} gives
\[
 W_2(\whatnu,\pi_M)
 \le C_M\left(
 2\Delta_N^{1/2}
 +\Delta_N^{-q/4}e^{-r^2/(64\Delta_N)}
 \right).
\]
The final term is $o(\Delta_N^{1/2})$, proving
\eqref{eq:Delta-half}.
\end{proof}

\subsection{Proof of Proposition~\ref{prop:rwmh-geometric}}

\begin{proof}
The distance to a nonempty compact set is continuous. Thus
$\Psi_{\wideM}=\chi(d_{\wideM}^2)$ is continuous, even without a
reach bound. Since $g$ is continuous on the compact set $K$,
\begin{align*}
 0
 &<
 |K|\exp\left\{
 -\sup_{z\in K}
 \left[g(z)+\frac{\kappa}{2}\Psi_{\wideM}(z)\right]
 \right\}
 \le \widehat Z_{N,\kappa}^{K},\\
 \widehat Z_{N,\kappa}^{K}
 &\le
 |K|\exp\left\{
 -\inf_{z\in K}
 \left[g(z)+\frac{\kappa}{2}\Psi_{\wideM}(z)\right]
 \right\}
 <\infty.
\end{align*}
Consequently, $\whatnu$ in \eqref{eq:estimated-penalized} is a
well-defined probability measure with finite first and second moments.

Since $q_\sigma(y-x)=q_\sigma(x-y)$,
\begin{align*}
 &\frac{
 e^{-g(x)-\kappa\Psi_{\wideM}(x)/2}}
 {\widehat Z_{N,\kappa}^{K}}
 q_\sigma(y-x)a_{N,\kappa}(x,y)\\
 &\qquad=
 \frac{q_\sigma(y-x)}{\widehat Z_{N,\kappa}^{K}}
 \min\left\{
 e^{-g(x)-\kappa\Psi_{\wideM}(x)/2},
 e^{-g(y)-\kappa\Psi_{\wideM}(y)/2}
 \right\}\\
 &\qquad=
 \frac{
 e^{-g(y)-\kappa\Psi_{\wideM}(y)/2}}
 {\widehat Z_{N,\kappa}^{K}}
 q_\sigma(x-y)a_{N,\kappa}(y,x).
\end{align*}
Thus the accepted-move part of
$\whatnu(dx)P_{N,\kappa}(x,dy)$ is symmetric in $(x,y)$. Its rejection
part is supported on the diagonal and is also symmetric. 
Hence, for
all $A,B\in\mathcal B(K)$,
\[
 \int_A P_{N,\kappa}(x,B)\,\whatnu(dx)
 =
 \int_B P_{N,\kappa}(y,A)\,\whatnu(dy).
\]
Taking $A=K$ proves that $\whatnu$ is invariant.

For $x,y\in K$, the diameter bound gives
\[
 q_\sigma(y-x)
 \ge(2\pi\sigma^2)^{-D/2}
     \exp\left\{-\frac{L_K^2}{2\sigma^2}\right\}.
\]
Moreover, $0\le\Psi_{\wideM}\le\chi(4r^2)$ on $K$, and therefore
\[
 g(y)-g(x)
 +\frac{\kappa}{2}
 \left[\Psi_{\wideM}(y)-\Psi_{\wideM}(x)\right]
 \le
 \osc_K(g)+\frac{\kappa}{2}\chi(4r^2).
\]
It follows from \eqref{eq:rwmh-acceptance} that
\[
 a_{N,\kappa}(x,y)
 \ge
 \exp\left\{
 -\osc_K(g)-\frac{\kappa}{2}\chi(4r^2)
 \right\}.
\]
Dropping the nonnegative rejection part of the transition kernel gives
\begin{align*}
 P_{N,\kappa}(x,A)
 &\ge\int_Aq_\sigma(y-x)a_{N,\kappa}(x,y)\,dy\\
 &\ge
 \underline\varepsilon_{\kappa,\sigma}
 \frac{\Leb_D(A)}{|K|},
 \qquad A\in\mathcal B(K),
\end{align*}
which proves \eqref{eq:rwmh-uniform-minorization}. The constant is
positive, and for every $x\in K$,
\[
 \underline\varepsilon_{\kappa,\sigma}
 \le
 |K|(2\pi\sigma^2)^{-D/2}
 \exp\left\{-\frac{L_K^2}{2\sigma^2}\right\}
 \le\int_Kq_\sigma(y-x)\,dy<1.
\]
The strict inequality holds because a nondegenerate Gaussian places
positive mass outside the bounded set $K$.

\paragraph{Total-variation contraction and uniqueness.}
The minorization \eqref{eq:rwmh-uniform-minorization} yields a Markov
kernel $\mathcal R$ such that
\[
 P_{N,\kappa}(x,A)
 =
 \underline\varepsilon_{\kappa,\sigma}
 \frac{\Leb_D(A)}{|K|}
 +(1-\underline\varepsilon_{\kappa,\sigma})\mathcal R(x,A).
\]
The common first term cancels when comparing the images of two
probability measures, and every Markov kernel is nonexpansive in total
variation. Therefore,
\[
 \lVert\mu_1P_{N,\kappa}-\mu_2P_{N,\kappa}\rVert_{\TV}
 \le
 (1-\underline\varepsilon_{\kappa,\sigma})
 \lVert\mu_1-\mu_2\rVert_{\TV}.
\]
Applying this inequality repeatedly with $\mu_2=\whatnu$ and using
the invariance of $\whatnu$ proves 
\[
 \lVert\rho P_{N,\kappa}^m-\whatnu\rVert_{\TV}
 \le
 (1-\underline\varepsilon_{\kappa,\sigma})^m
 \lVert\rho-\whatnu\rVert_{\TV},
\]
which is \eqref{eq:rwmh-tv-rate}. If $\widetilde\nu$ is another
invariant probability measure, the same inequality gives
\[
 \lVert\widetilde\nu-\whatnu\rVert_{\TV}
 \le
 (1-\underline\varepsilon_{\kappa,\sigma})^m
 \lVert\widetilde\nu-\whatnu\rVert_{\TV}.
\]
Letting $m\to\infty$ proves $\widetilde\nu=\whatnu$.

\paragraph{Conversion to Wasserstein distance.}
Applying \eqref{eq:w2-tv-coupling} to
$\mu_1=\rho P_{N,\kappa}^m$ and $\mu_2=\whatnu$, and then using
\eqref{eq:rwmh-tv-rate}, proves
\eqref{eq:rwmh-w2-rate}.
\end{proof}

\subsection{Proof of Theorem~\ref{thm:uniform-penalty-lower-bound}}\label{sec:prf-lowerbound}

We collect and summarise some notations introduced for convenience at various points of the proof.
\begin{center}
\begin{tabular}{|@{}cl|cl@{}|}
\toprule
Symbol & Definition & Symbol & Definition \\
\midrule
$\epsilon$ & $d_H(\mathcal X_N,M)$
& $P$ & $\operatorname{conv}(\mathcal X_N)$ \\
$A_{t,N}$ & $\operatorname{Conv}(t,\mathcal X_N)$
& $\Delta$ & $d_H(A_{t,N},M)$ \\
$s$ & $\kappa^{-1/2}$
& $w_s(u)$ & $\exp\{-\chi(u^2)/(2s^2)\}$ \\
$\mathcal Z_E(s)$ & $\displaystyle\int_K w_s(d_E(x))\,dx$
&  &  \\
\bottomrule
\end{tabular}
\end{center}

To prove this theorem, we first state and prove the following lemma.
\begin{lemma}[Quadratic tangent and convex-hull deviation]
\label{lem:convex-reach-tangent}
Let $\Omega\subset\R^2$ be a convex body whose boundary $M$ is a
$C^1$ curve of positive reach. For $y\in M$, let $n_y$ be the outward
unit normal. Then, for every $0<\tau<\Reach(M)$ and every $p\in M$,
\begin{equation}
 0\le \langle y-p,n_y\rangle
 \le \frac{\|p-y\|^2}{2\tau}.
\label{eq:lower-bound-tangent-quadratic}
\end{equation}
Moreover, if a nonempty set $S\subset M$ satisfies $\diam(S)\le h$, then
\begin{equation}
 \operatorname{conv}(S)
 \subset
 \left\{x\in\Omega:d_M(x)\le\frac{h^2}{2\tau}\right\}.
\label{eq:lower-bound-local-hull-layer}
\end{equation}
\end{lemma}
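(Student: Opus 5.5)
The plan is to prove the tangent inequality \eqref{eq:lower-bound-tangent-quadratic} first, using only convexity and the reach; the hull statement \eqref{eq:lower-bound-local-hull-layer} will then follow by taking convex combinations.

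\emph{Lower bound in \eqref{eq:lower-bound-tangent-quadratic}.} Since $\Omega$ is convex, its tangent line at $y$ is a supporting line, and $\Omega$ lies in the half-plane $\{x:\langle x-y,n_y\rangle\le0\}$. Taking $x=p\in M\subset\Omega$ gives $\langle y-p,n_y\rangle\ge0$.

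\emph{Upper bound in \eqref{eq:lower-bound-tangent-quadratic}.} I will use the standard ball characterization of reach, due to Federer. For $0<\tau<\Reach(M)$, the point $c:=y-\tau n_y$ has unique projection $y$ onto $M$, so the open ball $B(c,\tau)$ contains no point of $M$. Hence every $p\in M$ satisfies $\|p-c\|^2\ge\tau^2$. Expanding,
\[
\|p-y\|^2+2\tau\langle p-y,n_y\rangle\ge0,
\]
which rearranges to $\langle y-p,n_y\rangle\le\|p-y\|^2/(2\tau)$.

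I need to justify that the inward normal segment indeed projects to $y$. For $0<\tau<\Reach(M)$, the normal segment $y-sn_y$ with $s\in[0,\tau]$ stays in the region of unique projection. The projection map is continuous there and equals $y$ for small $s$, since $M$ is a $C^1$ curve. I would cite Federer's result that $P_M(y+s n)=y$ for $|s|<\Reach(M)$ and $n$ normal, i.e.\ Theorem~4.8(12) of Federer's curvature measures paper. This citation step is the only delicate point.

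\emph{Hull statement \eqref{eq:lower-bound-local-hull-layer}.} Let $x=\sum_i\lambda_ip_i\in\operatorname{conv}(S)$ with $p_i\in S$. Then $x\in\Omega$ by convexity. Since $\Omega$ is a convex body in the plane and $x\in\Omega$, the nearest boundary point $y:=$ a nearest point of $M$ to $x$ satisfies $d_M(x)=\langle y-x,n_y\rangle$. Indeed, for interior $x$ the vector $y-x$ is normal to $M$ at $y$ and points outward, because $x$ lies on the $\Omega$ side of the supporting line at $y$. Then
\[
d_M(x)=\sum_i\lambda_i\langle y-p_i,n_y\rangle\le\frac{1}{2\tau}\sum_i\lambda_i\|p_i-y\|^2 .
\]
This alone does not bound $\|p_i-y\|$ by $h$, since $y$ need not lie in $S$. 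To fix this, I will instead choose $y$ to minimize $\langle\cdot,n\rangle$-based distance relative to a point of $S$. Concretely, fix $p_1\in S$ and apply \eqref{eq:lower-bound-tangent-quadratic} with $y=p_1$ and $p=p_i$. This shows all $p_i$ lie in the slab between the tangent line at $p_1$ and its shift inward by $h^2/(2\tau)$, and hence so does $x$. The outward ray from $x$ in direction $n_{p_1}$ leaves $\Omega$ within distance at most $\langle p_1-x,n_{p_1}\rangle$, because $\Omega$ is contained in the half-plane below the tangent line at $p_1$. That ray must meet $M$ at some point, so
\[
d_M(x)\le\langle p_1-x,n_{p_1}\rangle=\sum_i\lambda_i\langle p_1-p_i,n_{p_1}\rangle\le\frac{h^2}{2\tau}.
\]
This gives \eqref{eq:lower-bound-local-hull-layer}.
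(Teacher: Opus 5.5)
Your proof is correct and follows the paper's route. Like the paper, you first get the quadratic tangent bound from the reach and then take convex combinations around a fixed point of $S$. For the first step you derive the bound yourself from the empty ball of radius $\tau$ centred at $y-\tau n_y$, via Federer's Theorem~4.8(12); the paper instead quotes the equivalent tangent-distance bound from Federer's Theorem~4.18. In the last step, your ray argument giving $d_M(x)\le\langle p_1-x,n_{p_1}\rangle$ is the correct justification, whereas the paper writes $d_M(x)=\langle y-x,n_y\rangle$, which in general holds only as this inequality.
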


\begin{proof}
By \citet[Theorem~4.18]{federer1959curvature} (and more specifically its consequence in Page 32 of \cite{aamari2017thesis}),
\[
 d_{T_yM}(p-y)\le \frac{\|p-y\|^2}{2\tau}.
\]
In $\R^2$, the tangent line satisfies $T_yM=n_y^\perp$, and hence
\[
 d_{T_yM}(v)=|\langle v,n_y\rangle|,
 \qquad v\in\R^2.
\]
Taking $v=p-y$ and using convexity of $\Omega$ together with the outward
orientation of $n_y$ gives
\[
 d_{T_yM}(p-y)=|\langle p-y,n_y\rangle|
 =\langle y-p,n_y\rangle.
\]
Substitution into the reach inequality proves
\eqref{eq:lower-bound-tangent-quadratic}.

Now fix $y\in S$ and write an arbitrary $x\in\operatorname{conv}(S)$ as
$x=\sum_j\lambda_jp_j$, where $p_j\in S$, $\lambda_j\ge0$, and
$\sum_j\lambda_j=1$. By \eqref{eq:lower-bound-tangent-quadratic},
\[
 0\le\langle y-x,n_y\rangle
 =\sum_j\lambda_j\langle y-p_j,n_y\rangle
 \le\frac{h^2}{2\tau}.
\]
Note that the previous display is for all $y\in S$. Further note that $x\in\Omega$ due to convexity.  Consequently,
$$d_M(x)=\langle y-x,n_y\rangle\le h^2/(2\tau),$$ proving
\eqref{eq:lower-bound-local-hull-layer}.
\end{proof}

\begin{proof}
We use the body $\Omega$ displayed in the theorem. The function
$u\mapsto(u_+)^4$ is $C^3$, convex, and nondecreasing on $[0,\infty)$.
Consequently, the defining function of $\Omega$ is convex and $C^3$, and
its gradient does not vanish on the level set defining $M$. Thus $\Omega$
is a compact convex body with $C^3$ boundary. One of its straight edges is
the upper segment
\[
 \{(x_1,2):|x_1|\le2\}.
\]
In the first quadrant, its curved corner is described explicitly by
\[
 (x_1-2)^4+(x_2-1)^4=1,
 \qquad 2<x_1<3,\quad 1<x_2<2.
\]
The curvature is strictly positive in the interior of this curved corner.
Fix a closed portion $\Gamma$ away from its endpoints; continuity gives a constant
$k_*>0$ such that the curvature on $\Gamma$ is at least $k_*$. A compact
$C^3$ convex curve has positive reach, and
$\Omega\subset\overline B(0,4)$, so we may take $z_0=0$ and $R_0=4$ in
Assumption~\ref{ass:ambient-envelope}. Fix the resulting $K$, choose $r$ as
in Section~\ref{sec:model}, and fix an admissible cap $\chi$. All constants
below may depend on these fixed objects and on $\eta$, but not on $t$,
$\mathcal X_N$, or $\kappa$.

\begingroup
\renewcommand{\theHfigure}{lowerbound.\arabic{figure}}
\refstepcounter{figure}
\label{fig:lower-bound-body}
\begin{center}
\begin{tikzpicture}[scale=0.72]
  \draw[thick] (-2,2)--(2,2);
  \draw[thick] (-2,-2)--(2,-2);
  \draw[thick] (3,-1)--(3,1);
  \draw[thick] (-3,-1)--(-3,1);
  \foreach \sx/\sy in {1/1,-1/1,1/-1,-1/-1}{
    \draw[thick,samples=60,domain=0:90,smooth,variable=\a]
      plot ({\sx*(2+sqrt(cos(\a)))},{\sy*(1+sqrt(sin(\a)))});
  }
  \draw[blue!70!black,very thick] (-1.25,2)--(1.25,2);
  \draw[red!75!black,very thick,samples=40,domain=25:65,smooth,variable=\a]
    plot ({2+sqrt(cos(\a))},{1+sqrt(sin(\a))});
  \node[blue!70!black,above] at (0,2.08) {straight edge};
  \draw[red!75!black,->] (3.55,1.75)--(2.76,1.68);
  \node[red!75!black,right] at (3.5,1.75) {curved corner $\Gamma$};
  \node at (0,0) {$\Omega$};
\end{tikzpicture}
\par\smallskip
\begin{minipage}{0.9\linewidth}
\small
\textbf{Figure~\thefigure:}
The fixed convex body in
Theorem~\ref{thm:uniform-penalty-lower-bound}. The highlighted horizontal
segment is one of the straight edges, while the highlighted portion
$\Gamma$ of a curved corner has curvature bounded below by a positive
constant.
\end{minipage}
\end{center}
\endgroup

Write
\[
 \epsilon:=d_H(\mathcal X_N,M),
 \qquad
 P:=\operatorname{conv}(\mathcal X_N).
\]
We always take $t$ sufficiently small. By
\eqref{eq:lower-bound-observation-density},
$\epsilon\le\eta t$, where $\eta<1/3$. In the first three steps set
$s:=\kappa^{-1/2}$. For $u\ge0$, put
\[
 w_s(u):=\exp\left\{-\frac{\chi(u^2)}{2s^2}\right\},
\]
and denote by $\mathcal Z_E(s)$ the normalizing constant of
$\nu_{E,s^{-2}}$ for a compact set $E\subset K$. Since $g\equiv0$, this is
the integral over $K$ of $w_s(d_E(x))$.

\paragraph{Step 1: Bounds on the Hausdorff distance.}
Fix $0<\tau<\Reach(M)$, and let $n_y$ be the outward unit normal at
$y\in M$. Lemma~\ref{lem:convex-reach-tangent} gives
\eqref{eq:lower-bound-tangent-quadratic} for every $p\in M$.

We introduce some notation for convenience. Choose the cyclic orientation--without losing generality--of $M$, and in that orientation rewrite $\Xcal_N=\{p_1,\ldots,p_N\}$. Set $p_{N+1}:=p_1$, and let $\Gamma_i$ be the
subarc from $p_i$ to $p_{i+1}$, with (arc-)length
$\ell_i$. $y_i$ divides $\Gamma_i$ into two subarcs of length $\ell_i/2$ (its midpoint). 

Choose
$q_i\in\mathcal X_N$ with $\|q_i-y_i\|\le\epsilon$. When $\epsilon$ is
smaller than a fixed fraction of $\Reach(M)$, $q_i$ lies on the same local
branch of $M$ as $y_i$. Since the interior of $\Gamma_i$ contains no
points of $\mathcal X_N$, its arc-length distance from $y_i$ is at least
$\ell_i/2$, as illustrated in Figure \ref{fig:arc}.
\begin{center}
\begin{tikzpicture}[x=1.45cm,y=1.65cm,>=stealth,font=\small]\label{fig:arc}
  \draw[thick,samples=80,domain=-2.65:2.65,smooth,variable=\x]
    plot ({\x},{0.4+0.1*\x*\x});
  \draw[red!75!black,very thick,samples=40,domain=-1.2:1.2,
        smooth,variable=\x]
    plot ({\x},{0.4+0.1*\x*\x});
  \coordinate (pi) at (-1.2,0.544);
  \coordinate (pip) at (1.2,0.544);
  \coordinate (yi) at (0,0.4);
  \coordinate (qi) at (-1.72,0.696);
  \coordinate (epsmid) at (-0.86,0.548);

  \draw[blue!70!black,dashed,thick] (pi)--(pip);
  \draw[dashed,thick] (yi)--(qi);
  \fill (pi) circle (1.8pt);
  \fill (pip) circle (1.8pt);
  \fill (yi) circle (1.8pt);
  \fill (qi) circle (1.8pt);
  \fill (-2.3,0.929) circle (1.5pt);
  \fill (2.2,0.884) circle (1.5pt);

  \node (qilabel) at (-2.25,1.35) {$q_i$};
  \draw[->] (qilabel)--(qi);
  \node (pilabel) at (-1.15,1.45) {$p_i$};
  \draw[->] (pilabel)--(pi);
  \node (piplabel) at (1.15,1.45) {$p_{i+1}$};
  \draw[->] (piplabel)--(pip);
  \node (yilabel) at (0.45,-0.18) {$y_i$};
  \draw[->] (yilabel)--(yi);

  \node[blue!70!black] (chordlabel) at (0,1.12)
    {$[p_i,p_{i+1}]\subset A_{t,N}$};
  \draw[blue!70!black,->] (chordlabel)--(0,0.544);
  \node[red!75!black] (arclabel) at (0,-0.5)
    {$\Gamma_i$};
  \draw[red!75!black,->] (arclabel)--(0.62,0.438);
  \node (epslabel) at (-1.9,-0.12)
    {$\|q_i-y_i\|\le\epsilon$};
  \draw[->] (epslabel)--(epsmid);

  \draw[->,thick] (1.8,0.724)--(2.25,0.906);
  \node[right] at (2.25,0.906) {orientation};
\end{tikzpicture}
\par\smallskip
\begin{minipage}{0.88\linewidth}
\small
Consecutive observations $p_i,p_{i+1}$ bound the arc $\Gamma_i$.
Hausdorff density supplies an observation $q_i$ within $\epsilon$ of its
midpoint $y_i$; hence $q_i$ lies beyond one endpoint. The opposite-side case
is symmetric.
\end{minipage}
\end{center}

For $u,v\in M$, let $d_M^{\mathrm{geo}}(u,v)$ denote the length of the shorter subarc of $M$
joining them. Apply \citet[Lemma~3]{boissonnat2019reach} with
$S=M$, $r=\Reach(M)$, $a=u$, and $b=v$. Whenever
$\|u-v\|<2\Reach(M)$, it gives
\[
 d_M^{\mathrm{geo}}(u,v)
 \le 2\Reach(M)\arcsin\left(
 \frac{\|u-v\|}{2\Reach(M)}\right).
\]
Since $\Reach(M)>0$ is fixed, the Taylor expansion of $\arcsin$ at zero
gives
\[
 \begin{aligned}
 &2\Reach(M)\arcsin\left(
   \frac{\|u-v\|}{2\Reach(M)}\right)\\
 &\qquad=\|u-v\|
 +\frac{\|u-v\|^3}{24\Reach(M)^2}
 +O(\|u-v\|^5)
 \le\|u-v\|+C\|u-v\|^2
 \end{aligned}
\]
for $u,v$ sufficiently close. Consequently,
\[
 \frac{\ell_i}{2}
 \le d_M^{\mathrm{geo}}(y_i,q_i)
 \le\epsilon+C\epsilon^2.
\]
Thus
$\|p_i-p_{i+1}\|\le\ell_i\le2\epsilon+C\epsilon^2$. The minimum
enclosing-ball radius of $\{p_i,p_{i+1}\}$ is half this distance, which is
at most
\[
 \epsilon+C\epsilon^2
 \le\eta t+C\eta^2t^2<t,
\]
whenever
$t<(1-\eta)/(C\eta^2)$. Hence every straight
chord $[p_i,p_{i+1}]$, $1\le i\le N$, belongs to $A_{t,N}$.

It follows in particular that
\begin{equation}
 \partial P\subset A_{t,N}.
\label{eq:lower-bound-polygon-boundary}
\end{equation}
We first bound $\sup_{y\in M}d_{A_{t,N}}(y)$, and then
$\sup_{x\in A_{t,N}}d_M(x)$.

For $n\in\mathbb S^1$, choose $y\in M$ maximizing
$\langle y,n\rangle$ over $\Omega$, and an observed point $p$ with
$\|p-y\|\le\epsilon$. Equation~\eqref{eq:lower-bound-tangent-quadratic}
and $p\in P\subset\Omega$ give
\[
 0\le
 \sup_{x\in\Omega}\langle x,n\rangle
 -\sup_{x\in P}\langle x,n\rangle
 \le \langle y-p,n\rangle
 \le C\epsilon^2.
\]
Note that this bound is uniform in $n$. Since $P\subset\Omega$, the support-vector formula for Hausdorff distance
(as in, for example, eq. (10) \cite{liu2023asymptotic}) yields
\begin{equation}
 d_H(P,\Omega)\le C\epsilon^2.
\label{eq:lower-bound-convex-hulls}
\end{equation}
Together with \eqref{eq:lower-bound-polygon-boundary}, this implies
$\sup_{y\in M}d_{A_{t,N}}(y)\le Ct^2$. We next control
$\sup_{x\in A_{t,N}}d_M(x)$.

For the fixed $t$, each local convex hull in the union defining
$A_{t,N}$ has the form
$\operatorname{conv}(S)$ for some nonempty $S\subseteq\mathcal X_N$ with
$\operatorname{rad}(S)\le t$, and hence $\diam(S)\le2t$. Applying
\eqref{eq:lower-bound-local-hull-layer} of
Lemma~\ref{lem:convex-reach-tangent} with $h=2t$ and taking the union over
all admissible $S$ gives
\[
 \sup_{x\in A_{t,N}}d_M(x)\le Ct^2.
\]
Combined with the bound on $\sup_{y\in M}d_{A_{t,N}}(y)$, we have shown the upper bound of \eqref{eq:lower-bound-basic-geometry-main}
\[
 d_H(A_{t,N},M)\le Ct^2.
\]

We turn to the lower bound. 

We again require some notation. Recall from
Figure~\ref{fig:lower-bound-body} the fixed closed portion $\Gamma$ of a
curved corner on which the curvature is at least $k_*$. Choose points
$z_1,\ldots,z_m$ along a fixed portion contained in the interior of
$\Gamma$, with
consecutive centers separated by arclength $4t$.
Since $\Gamma$ has fixed positive length, $m\ge c/t$ for small $t$, the
balls $B(z_j,t)$ are pairwise disjoint. For each $j$, let $z_j^-$ and
$z_j^+$ be the boundary points on either side of $z_j$ satisfying
\[
 \|z_j^--z_j\|=\|z_j^+-z_j\|=(1-\eta)t.
\]
By \eqref{eq:lower-bound-observation-density} and the definition of
$\epsilon$, there exist observations
$p_j^-,p_j^0,p_j^+\in\mathcal X_N$ within distance $\epsilon$ of
$z_j^-,z_j,z_j^+$, respectively. Since $\epsilon\le\eta t$, all three
observations lie in $B(z_j,t)$, so their triangle
\[
 T_j:=\operatorname{conv}\{p_j^-,p_j^0,p_j^+\}
\]
is contained in $A_{t,N}$. See Figure \ref{fig:lowerboundcons} for illustrative reference.

\begin{center}
\begin{tikzpicture}[x=0.88cm,y=0.88cm,>=stealth,font=\scriptsize]\label{fig:lowerboundcons}
  \begin{scope}[xshift=-3.6cm]
    \node[font=\small] at (0,2.15) {one local triangle};
    \draw[thick,samples=60,domain=-2.2:2.2,smooth,variable=\x]
      plot ({\x},{0.28*\x*\x});
    \coordinate (zj) at (0,0);
    \coordinate (zm) at (-1.35,0.510);
    \coordinate (zp) at (1.35,0.510);
    \coordinate (pm) at (-1.10,0.339);
    \coordinate (p0) at (0.12,0.004);
    \coordinate (pp) at (1.12,0.351);

    \draw[dashed,gray] (zj) circle (1.55);
    \node[gray] at (-1.55,-1.25) {$B(z_j,t)$};
    \fill[blue!25,opacity=0.75] (pm)--(p0)--(pp)--cycle;
    \draw[blue!70!black,thick] (pm)--(p0)--(pp)--cycle;
    \node[blue!70!black] at (0.06,0.22) {$T_j$};

    \draw[dotted,gray,thick] (zm)--(pm) (zj)--(p0) (zp)--(pp);
    \draw (zm) circle (2pt);
    \draw (zj) circle (2pt);
    \draw (zp) circle (2pt);
    \fill (pm) circle (1.8pt);
    \fill (p0) circle (1.8pt);
    \fill (pp) circle (1.8pt);

    \node at (-1.68,0.90) {$z_j^-$};
    \node at (1.68,0.90) {$z_j^+$};
    \node at (-0.38,-0.42) {$z_j$};
    \node at (-1.50,0.12) {$p_j^-$};
    \node at (1.50,0.12) {$p_j^+$};
    \node at (0.43,-0.42) {$p_j^0$};
  \end{scope}

  \begin{scope}[xshift=3.6cm]
    \node[font=\small] at (0,2.15) {packing along $\Gamma$};
    \draw[red!75!black,very thick,samples=70,domain=-3.0:3.0,
          smooth,variable=\x]
      plot ({\x},{0.10*\x*\x});
    \foreach \x/\lab in {-2/{z_{j-1}},0/{z_j},2/{z_{j+1}}}{
      \pgfmathsetmacro{\yy}{0.10*\x*\x}
      \draw[dashed,gray] (\x,\yy) circle (0.72);
      \fill (\x,\yy) circle (1.8pt) node[below] {$\lab$};
    }
    \draw[<->] (-2,1.35)--(0,1.35)
      node[midway,above] {arclength $4t$};
    \node[align=center] at (0,-1.15)
      {$m\ge c/t$ disjoint\\radius-$t$ neighborhoods};
  \end{scope}
\end{tikzpicture}
\par\smallskip
\begin{minipage}{0.9\linewidth}
\small
Open circles are the reference points $z_j^-,z_j,z_j^+$ on $\Gamma$;
filled circles are the corresponding observations, and each dotted link has
length at most $\epsilon$. Curvature gives the shaded triangle base of order
$t$ and height of order $t^2$.
\end{minipage}
\end{center}
The previous argument for the upper bound was via a carefully chosen covering over the point cloud, now to derive the lower bound we will consider a carefully chosen packing.
Set
\[
 a_j:=d_M^{\mathrm{geo}}(p_j^-,p_j^0),
 \qquad
 b_j:=d_M^{\mathrm{geo}}(p_j^0,p_j^+).
\]
The local arclength estimate above and $\epsilon\le\eta t$ give
\[
 a_j,b_j\ge(1-3\eta)t-Ct^2.
\]
Thus, since $\eta<1/3$, $c_\eta t\le a_j,b_j\le Ct$ for small $t$.

Let $\gamma_j$ be the unit-speed parametrization of $M$ with
$\gamma_j(0)=p_j^0$, $\gamma_j(-a_j)=p_j^-$, and
$\gamma_j(b_j)=p_j^+$. The local expansion
\citep[Lemma~2(iv)]{aamari2019nonasymptotic} gives
\[
 \gamma_j(u)=p_j^0+u\gamma_j'(0)+\frac{u^2}{2}\gamma_j''(0)+R_j(u),
 \qquad
 \|R_j(u)\|\le C|u|^3,
\]
Since $\gamma_j$ has unit speed, $\gamma_j'(0)$ is its unit tangent, while $\gamma_j''(0)$ is perpendicular to it and has magnitude equal to the curvature.
and $\|\gamma_j''(0)\|$ is the curvature at $p_j^0$, hence is at least
$k_*$ on $\Gamma$. Therefore, with $\operatorname{det}$ to be the usual determinant of a square matrix,
\[
 \begin{aligned}
 2\Leb_2(T_j)
 &=\left|\det\bigl(
     p_j^- -p_j^0,
     p_j^+ -p_j^0
   \bigr)\right|\\
 &=\frac{\|\gamma_j''(0)\|}{2}a_jb_j(a_j+b_j)+O(t^4)
 \ge c(k_*,\eta)t^3
 \end{aligned}
\]
for small $t$. Since the containing balls are disjoint, so are the
triangles. Thus
$\Leb_2(A_{t,N})\ge ct^2$. The boundary-layer inclusion proved above gives
$\Leb_2(A_{t,N})\le Ct^2$. Since a boundary layer of width
$u$ around the fixed curve has area at most $Cu$, the area lower bound also
forces $\sup_{x\in A_{t,N}}d_M(x)\ge ct^2$. We have proved 
\begin{equation}
 ct^2\le \underbrace{d_H(A_{t,N},M)}_{=\Delta}\le Ct^2, \qquad ct^2\le \Leb_2(A_{t,N})\le Ct^2 
\label{eq:lower-bound-basic-geometry}
\end{equation}
which shows \eqref{eq:lower-bound-basic-geometry-main}.

\paragraph{Step 2: comparison of the normalizing constants.}

Recall that $\Zcal$ denotes the normalizing constants. We will show
\begin{equation*}
 ct^2\le\mathcal Z_{A_{t,N}}(s)-\mathcal Z_M(s)\le Ct^2,
 \qquad
 \mathcal Z_M(s)\asymp s,
 \qquad
 \mathcal Z_{A_{t,N}}(s)\asymp s+t^2
\end{equation*}
starting with the first bound.

Since $\chi(v)=\chi(4r^2)$ for $v\ge4r^2$, the definition of $w_s$ gives
$w_s(u)=w_s(2r)$ whenever $u\ge2r$. Therefore, for every $x\in K$ and any set $E \subseteq K$,
\[
 w_s(d_E(x))
 =w_s(2r)
 +\mathbf 1_{\{d_E(x)<2r\}}
  \{w_s(d_E(x))-w_s(2r)\}.
\]
Integrating this pointwise identity over $K$ gives
\[
 \mathcal Z_E(s)
 =w_s(2r)\Leb_2(K)
 +\int_{\{x:d_E(x)<2r\}}
   \{w_s(d_E(x))-w_s(2r)\}\,dx.
\]
The set $\{d_E<2r\}$ is the disjoint union of
$E=\{d_E=0\}$ and $\{0<d_E<2r\}$. On $E$, $w_s(d_E)=w_s(0)=1$.
On the remaining set, $\lVert\nabla d_E\rVert=1$ almost everywhere, so
the coarea formula \citep[Theorem 3.2.22]{federer1969geometric} replaces integration over $x$ by integration over the
curves $\{d_E=u\}$. Thus
\begin{align}
 \mathcal Z_E(s)=&w_s(2r)\Leb_2(K)
 +\int_{\{x:d_E(x)=0\}}
   \{w_s(d_E(x))-w_s(2r)\}\,dx \\
  &+ \int_{\{x:d_E(x)\in(0,2r)\}}
   \{w_s(d_E(x))-w_s(2r)\}\,dx  \\
 =&w_s(2r)\Leb_2(K)
 +(1-w_s(2r))\Leb_2(E)\notag\\
 &+\int_0^{2r}\{w_s(u)-w_s(2r)\}
   \operatorname{length}\bigl(\{x\in K:d_E(x)=u\}\bigr)\,du.
\label{eq:lower-bound-baseline-decomposition}
\end{align}

For $E=M$, recall from \eqref{eq:volume-def} that
$V_M=\Vol_M(M)$, which is the length of $M$ here. At distance $u<2r$
from $M$, the curve outside $\Omega$ has length $V_M+2\pi u$, while the
curve inside $\Omega$ has length $V_M-2\pi u$. Their lengths add to $2V_M$.
Since $M$ has zero planar area,
\eqref{eq:lower-bound-baseline-decomposition} becomes
\begin{equation}
 \mathcal Z_M(s)
 =w_s(2r)\Leb_2(K)
  +2V_M\int_0^{2r}\{w_s(u)-w_s(2r)\}\,du,
\label{eq:lower-bound-manifold-normalizer}
\end{equation}

Taking $E=A_{t,N}$ in
\eqref{eq:lower-bound-baseline-decomposition} gives
\begin{align}
 \mathcal Z_{A_{t,N}}(s)
 ={}&w_s(2r)\Leb_2(K)
 +(1-w_s(2r))\Leb_2(A_{t,N})\notag\\
 &+\int_0^{2r}\{w_s(u)-w_s(2r)\}
 \operatorname{length}\bigl(\{x\in K:d_{A_{t,N}}(x)=u\}\bigr)\,du.
\label{eq:lower-bound-reconstruction-normalizer}
\end{align}
Subtracting \eqref{eq:lower-bound-manifold-normalizer} from
\eqref{eq:lower-bound-reconstruction-normalizer}, the common term $w_s(2r)\Leb_2(K)$ is cancelled, and we get
\begin{align}
 \mathcal Z_{A_{t,N}}(s)-\mathcal Z_M(s)
 ={}&(1-w_s(2r))\Leb_2(A_{t,N})\notag\\
 &+\int_0^{2r}\{w_s(u)-w_s(2r)\}\notag\\
 &\qquad\times
 \Bigl\{\operatorname{length}
 \bigl(\{x\in K:d_{A_{t,N}}(x)=u\}\bigr)-2V_M\Bigr\}\,du.
\label{eq:lower-bound-normalizer-exact-difference}
\end{align}

Recall from \eqref{eq:lower-bound-basic-geometry} that $\exists \,c_A,C_A>0$ such that
\[
 c_At^2\le\Leb_2(A_{t,N})\le C_At^2.
\]
Recall from \eqref{eq:cap-inner}--\eqref{eq:cap-outer} that $\chi(u^2)=u^2$ for $0\leq u\leq r$, which in turn implies
\[
 w_s(u)=e^{-u^2/(2s^2)},
 \qquad 0\le u\le r.
\]
Thus, $\exists\, s_0>0 : \forall\,0<s\le s_0$,
\[
 cs\leq s\int_0^{r/s}e^{-v^2/2}\,dv\leq Cs.
\]

Since
$ \int_r^{2r} w_s(u)\,du$
is negligible, we have
\[
 cs\leq s\int_0^{2r}e^{-v^2/2}\,dv\leq Cs.
\]
By similar arguments
\[
 w_s(2r)\le\frac12
 \qquad\text{and}\qquad
 w_s(2r)\Leb_2(K)\le Cs,
\]
which gives us
\begin{equation}
 c s\le\int_0^{2r}\{w_s(u)-w_s(2r)\}\,du\le Cs,
 \qquad
 w_s(2r)\le\frac12,
 \qquad
 w_s(2r)\Leb_2(K)\le Cs.
\label{eq:lower-bound-weight-integral}
\end{equation}
 It remains only to prove, uniformly for $0<u\le2r$, that
\begin{equation}
 -Ct^2\le
 \operatorname{length}
 \bigl(\{x\in K:d_{A_{t,N}}(x)=u\}\bigr)-2V_M
 \le0
\label{eq:lower-bound-length-task}
\end{equation}
which would place the integral in
\eqref{eq:lower-bound-normalizer-exact-difference} between $-Cst^2$ and
$0$. Figure~\ref{fig:lower-bound-level-sets} illustrates the two lengths
being compared in this bound.

\begingroup
\renewcommand{\theHfigure}{lowerboundlevels.\arabic{figure}}
\refstepcounter{figure}
\label{fig:lower-bound-level-sets}
\begin{center}
\begin{tikzpicture}[x=0.75cm,y=0.75cm,>=stealth,font=\scriptsize]
  \begin{scope}[xshift=-3.8cm]
    \node[font=\small] at (0,2.45) {true constraint};
    \draw[black,very thick]
      plot[smooth cycle,tension=0.7] coordinates
      {(-2.15,0) (-1.75,1.35) (0,1.55) (1.75,1.35)
       (2.15,0) (1.75,-1.35) (0,-1.55) (-1.75,-1.35)};
    \draw[red!75!black,dashed,thick]
      plot[smooth cycle,tension=0.7] coordinates
      {(-2.48,0) (-2.02,1.63) (0,1.86) (2.02,1.63)
       (2.48,0) (2.02,-1.63) (0,-1.86) (-2.02,-1.63)};
    \draw[red!75!black,dashed,thick]
      plot[smooth cycle,tension=0.7] coordinates
      {(-1.80,0) (-1.47,1.07) (0,1.24) (1.47,1.07)
       (1.80,0) (1.47,-1.07) (0,-1.24) (-1.47,-1.07)};
    \node at (1.62,1.47) {$M$};
    \draw[<->,gray!80,thick] (2.12,0.58)--(2.43,0.67)
      node[midway,right] {$u$};
    \draw[<->,gray!80,thick] (1.78,-0.56)--(2.09,-0.65)
      node[midway,right] {$u$};
    \node[red!75!black] at (0,-2.10) {$\{d_M=u\}$};
    \node[align=center] at (0,-2.65)
      {inner and outer lengths sum to $2V_M$};
  \end{scope}

  \begin{scope}[xshift=3.8cm]
    \node[font=\small] at (0,2.45) {reconstructed constraint};
    \draw[gray!75,very thick]
      plot[smooth cycle,tension=0.7] coordinates
      {(-2.15,0) (-1.75,1.35) (0,1.55) (1.75,1.35)
       (2.15,0) (1.75,-1.35) (0,-1.55) (-1.75,-1.35)};
    \fill[blue!22,even odd rule]
      (-1.95,0.92)--(-0.85,1.38)--(0.62,1.31)--(1.88,0.93)--
      (2.03,-0.18)--(1.72,-1.18)--(0.28,-1.43)--(-1.58,-1.12)--
      (-1.96,-0.16)--cycle
      (-1.33,0.52)--(-0.52,0.91)--(0.78,0.73)--(1.38,0.50)--
      (1.24,-0.47)--(0.35,-0.82)--(-0.78,-0.70)--(-1.35,-0.34)--cycle;
    \draw[blue!75!black,very thick]
      (-1.95,0.92)--(-0.85,1.38)--(0.62,1.31)--(1.88,0.93)--
      (2.03,-0.18)--(1.72,-1.18)--(0.28,-1.43)--(-1.58,-1.12)--
      (-1.96,-0.16)--cycle;
    \draw[blue!75!black,very thick]
      (-1.33,0.52)--(-0.52,0.91)--(0.78,0.73)--(1.38,0.50)--
      (1.24,-0.47)--(0.35,-0.82)--(-0.78,-0.70)--(-1.35,-0.34)--cycle;
    \draw[red!75!black,dashed,thick]
      plot[smooth cycle,tension=0.65] coordinates
      {(-2.30,0) (-1.92,1.45) (0,1.72) (1.98,1.42)
       (2.34,0) (1.93,-1.46) (0,-1.70) (-1.95,-1.43)};
    \draw[red!75!black,dashed,thick]
      plot[smooth cycle,tension=0.65] coordinates
      {(-0.98,0) (-0.72,0.55) (0,0.64) (0.88,0.48)
       (0.98,0) (0.68,-0.50) (0,-0.59) (-0.82,-0.46)};
    \node[blue!75!black] at (0,1.12) {$A_{t,N}$};
    \node[gray!75] at (1.70,1.50) {$M$};
    \node[red!75!black,align=center] at (0,-2.10)
      {$\{d_{A_{t,N}}=u\}$};
    \node[align=center] at (0,-2.65)
      {total level-set length compared with $2V_M$};
  \end{scope}
\end{tikzpicture}
\par\smallskip
\begin{minipage}{0.9\linewidth}
\small
\textbf{Figure~\thefigure:}
The level-set comparison in \eqref{eq:lower-bound-length-task}. The red
dashed curves are the two components of the distance-$u$ level set. The
gray curve in the right panel shows the true constraint $M$. The drawing
is schematic.
\end{minipage}
\end{center}
\endgroup

We now prove this bound.

Let $S\subset\mathcal X_N$ satisfy $\operatorname{rad}(S)\le t$. $S$ is allowed to be a singleton. Thus
for some $z_S\in\R^2$,
$S\subseteq\overline B(z_S,t)$. If $S$ is a singleton, its convex hull is
already contained in $\partial P$ by
\eqref{eq:lower-bound-polygon-boundary}, so suppose that $|S|\ge2$.
Let $a,b\in S$ be the endpoints of the
short arc of $M$ containing $S$, and denote this arc by $\Gamma_{a,b}$. Due to positive reach, for small
$t$, the tangent direction changes by less than $\pi/2$ along
$\Gamma_{a,b}$, so $\Gamma_{a,b}$ lies in the closed disk with diameter
$[a,b]$. This disk is centered at $(a+b)/2$ and has radius
$\|a-b\|/2\le t$. Consequently,
$\mathcal X_N\cap\Gamma_{a,b}$ is also contained in a radius-$t$ ball and
is an admissible subset in the definition of $A_{t,N}$. Let $H_{a,b}^+$ be the closed half-plane bounded by the line through
$a,b$ that contains $\Gamma_{a,b}$, and let $H_{a,b}^-$ be the opposite
closed half-plane. Figure~\ref{fig:lower-bound-local-cap} summarizes this construction.
\begingroup
\renewcommand{\theHfigure}{lowerboundcap.\arabic{figure}}
\refstepcounter{figure}
\label{fig:lower-bound-local-cap}
\begin{center}
\begin{tikzpicture}[x=1.05cm,y=1.05cm,>=stealth,font=\small]
  \fill[orange!7] (-2.7,0) rectangle (2.7,2.05);
  \fill[gray!8] (-2.7,-1.85) rectangle (2.7,0);
  \draw[gray!65,densely dotted] (-2.7,0)--(2.7,0);
  \fill[blue!8] (0,0) circle (1.6);
  \draw[blue!55!black,dashed,thick] (0,0) circle (1.6);
  \coordinate (zs) at (-0.18,0.16);
  \draw[gray!75,dashed] (zs) circle (1.95);

  \draw[gray!70,thick,samples=30,domain=-2.35:-1.6,smooth,variable=\x]
    plot ({\x},{-0.38*(\x+1.6)*(\x+1.6)});
  \draw[gray!70,thick,samples=30,domain=1.6:2.35,smooth,variable=\x]
    plot ({\x},{-0.38*(\x-1.6)*(\x-1.6)});
  \draw[red!75!black,very thick,samples=60,domain=-1.6:1.6,
        smooth,variable=\x]
    plot ({\x},{0.55*(1-(\x/1.6)^2)});
  \draw[blue!75!black,very thick] (-1.6,0)--(1.6,0);

  \fill (-1.6,0) circle (1.8pt) node[below left] {$a$};
  \fill (1.6,0) circle (1.8pt) node[below right] {$b$};
  \foreach \x/\y in {-0.82/0.405,0/0.55,0.78/0.419}
    \fill (\x,\y) circle (1.7pt);
  \node at (0.22,0.80) {$S\subset\Gamma_{a,b}$};
  \node[red!75!black] at (0,1.17) {$\Gamma_{a,b}$};
  \node[blue!75!black,below] at (-0.82,-0.04) {$[a,b]$};

  \fill (0,0) circle (1.3pt);

  \fill (zs) circle (1.3pt);
  \node[gray!75,above right] at (zs) {$z_S$};
  \node[gray!75] at (-2.15,1.35) {$\overline B(z_S,t)$};
  \node[blue!55!black,align=center] at (2.55,1.05)
    {disk with\\diameter $[a,b]$};

  \node[orange!70!black] at (-2.25,1.72) {$H_{a,b}^+$};
  \node[gray!75!black] at (-2.25,-1.55) {$H_{a,b}^-$};
\end{tikzpicture}
\par\smallskip
\begin{minipage}{0.88\linewidth}
\small
\textbf{Figure~\thefigure:}
The geometry of a non-singleton admissible set $S$. The points of $S$ lie
in $\overline B(z_S,t)$. Its endpoints $a,b$ determine the short arc
$\Gamma_{a,b}$ and the chord $[a,b]$. For small $t$, the arc lies in the
disk with diameter $[a,b]$, whose radius is at most $t$. The chord line
separates $H_{a,b}^+$ from $H_{a,b}^-$.
\end{minipage}
\end{center}
\endgroup

As illustrated in Figure~\ref{fig:lower-bound-local-cap}, the line through
$a$ and $b$ cuts from $P$ the polygonal cap
\[
 C_{a,b}:=\operatorname{conv}(\mathcal X_N\cap\Gamma_{a,b})
          =P\cap H_{a,b}^+.
\]
Since $S\subseteq\mathcal X_N\cap\Gamma_{a,b}$, we have
$\operatorname{conv}(S)\subseteq C_{a,b}$.
Moreover, $C_{a,b}$ is itself one of the local convex hulls defining
$A_{t,N}$. Define the finite collection
\[
 \mathcal C:=\left\{(a,b)\in\mathcal X_N^2:
 \begin{array}{l}
 \exists \, S\subseteq\mathcal X_N : |S|\ge2, \operatorname{rad}(S)\le t,
 \text{and }a,b\in S\text{ are the endpoints of}\\ \text{the short arc of }M
 \text{ containing }S
 \end{array}\right\}.
\]
Then
\begin{equation}
 A_{t,N}=\bigcup_{(a,b)\in\mathcal C}C_{a,b}.
\label{eq:lower-bound-union-of-caps}
\end{equation}
Let
\[
 Q:=P\cap\bigcap_{(a,b)\in\mathcal C}H_{a,b}^-.
\]
By \eqref{eq:lower-bound-polygon-boundary}, no point of
$\partial P$ belongs to $P\setminus A_{t,N}$. De Morgan's identity applied
to \eqref{eq:lower-bound-union-of-caps} therefore gives
\begin{equation}
 A_{t,N}=P\setminus\operatorname{int}(Q).
\label{eq:lower-bound-cap-decomposition}
\end{equation}
Since $A_{t,N}\subseteq P$ contains $\partial P$, a point outside $P$
first meets $A_{t,N}$ at $\partial P$; since
$A_{t,N}=P\setminus\operatorname{int}(Q)$, a point in $Q$ first meets it
at $\partial Q$. Hence
\[
 d_{A_{t,N}}(x)=d_{\partial P}(x)\quad(x\notin P),
 \qquad
 d_{A_{t,N}}(x)=d_{\partial Q}(x)\quad(x\in Q),
\]
and $d_{A_{t,N}}(x)=0$ for $x\in A_{t,N}$.

The observations in $\Gamma_{a,b}$ lie in the disk with diameter $[a,b]$
and ($\because \|a-b\|/2\leq t$) have diameter at most $2t$. Applying
\eqref{eq:lower-bound-local-hull-layer} to
$S=\mathcal X_N\cap\Gamma_{a,b}$ and using the upper bound in
\eqref{eq:lower-bound-convex-hulls}, without loss of generality let $C_0$
be large enough that
\[
 C_{a,b}\subset
 \{x\in\Omega:d_M(x)<C_0t^2\},
 \qquad (a,b)\in\mathcal C,
 \qquad d_H(P,\Omega)\le C_0t^2.
\]
For a planar convex body $B$, we can shrink it by $u$ as
\[
 B_{-u}:=\{x\in B:d_{\partial B}(x)\ge u\}.
\]
Then
$\Omega_{-C_0t^2}\subset P$. 

Observe from Figure \ref{fig:lower-bound-local-cap} that if we shrink $\Omega$ by $C_0t^2$, the $\Gamma_{a,b}$ arc falls entirely within $H_{a,b}^-$. Since this is for any such $(a,b)\in\Ccal$, 
\begin{equation}
 \Omega_{-C_0t^2}\subset Q\subset P\subset\Omega.
\label{eq:lower-bound-core-inclusions}
\end{equation}
Moreover,
$(\Omega_{-C_0t^2})_{-u}=\Omega_{-(u+C_0t^2)}$. Applying these facts to
\eqref{eq:lower-bound-core-inclusions} gives
\begin{equation}
 \Omega_{-(u+C_0t^2)}
 \subset Q_{-u}
 \subset\Omega_{-u},
\qquad 0\le u\le2r
\label{eq:lower-bound-inner-parallels}
\end{equation}
where the second inclusion uses the fact $(E_1)_{-u}\subseteq(E_2)_{-u}$ for  $E_1\subseteq E_2$.

We can consider smaller $t_0$ in the hypothesis of Theorem \ref{thm:uniform-penalty-lower-bound}, so that $2r+C_0t^2<\Reach(M)$. Then, projections are unique up to distance $2r+C_0t^2$. Consequently, shrinking only reduces the curved corners of Figure \ref{fig:lower-bound-body} while keeping the straight edges unchanged in length. Therefore 
\[
 \operatorname{perimeter}(\Omega_{-v})=V_M-2\pi v,
 \qquad 0\le v\le2r+C_0t^2.
\]
Since all bodies under consideration are convex, \eqref{eq:lower-bound-core-inclusions} and
\eqref{eq:lower-bound-inner-parallels} imply, for $0\le u\le2r$,
\begin{equation}
\begin{aligned}
 V_M-2\pi(u+C_0t^2)
 &\le\operatorname{perimeter}(Q_{-u})
 \le V_M-2\pi u,\\
 V_M-2\pi C_0t^2
 &\le\operatorname{perimeter}(P)\le V_M.
\end{aligned}
\label{eq:lower-bound-perimeter-bounds}
\end{equation}
Enlarging the convex polygon $P$ by distance $v$ preserves the total
length of its translated edges and adds circular arcs of radius $v$ whose
angles sum to $2\pi$. Its perimeter therefore becomes $$\operatorname{perimeter}(P)+2\pi v.$$

The distance identities following
\eqref{eq:lower-bound-cap-decomposition} now show that, for
$0<u\le2r$, the level set outside $P$ has length
$\operatorname{perimeter}(P)+2\pi u$, while the level set inside $Q$ has
length $\operatorname{perimeter}(Q_{-u})$. Therefore
\[
 \operatorname{length}
 \bigl(\{x\in K:d_{A_{t,N}}(x)=u\}\bigr)
 =\operatorname{perimeter}(P)+2\pi u
  +\operatorname{perimeter}(Q_{-u}).
\]
Adding the bounds in \eqref{eq:lower-bound-perimeter-bounds} gives
\[
 2V_M-4\pi C_0t^2
 \le \operatorname{length}
 \bigl(\{x\in K:d_{A_{t,N}}(x)=u\}\bigr)
 \le2V_M,
 \qquad 0<u\le2r.
\]
This proves \eqref{eq:lower-bound-length-task}. Substituting this bound into
\eqref{eq:lower-bound-normalizer-exact-difference} and using
\eqref{eq:lower-bound-weight-integral} gives
\[
 (1-w_s(2r))\Leb_2(A_{t,N})-Cst^2
 \le\mathcal Z_{A_{t,N}}(s)-\mathcal Z_M(s)
 \le(1-w_s(2r))\Leb_2(A_{t,N}).
\]
Decrease $s_0$ further so that $Cs_0\le c_A/4$. Then
\[
 \frac{c_A}{4}t^2
 \le\mathcal Z_{A_{t,N}}(s)-\mathcal Z_M(s)
 \le C_At^2.
\]
Equation~\eqref{eq:lower-bound-manifold-normalizer} and
\eqref{eq:lower-bound-weight-integral} then give
\begin{equation}
 ct^2\le\mathcal Z_{A_{t,N}}(s)-\mathcal Z_M(s)\le Ct^2,
 \qquad
 \mathcal Z_M(s)\asymp s,
 \qquad
 \mathcal Z_{A_{t,N}}(s)\asymp s+t^2
\label{eq:lower-bound-normalizer-comparison}
\end{equation}
for $0<s\le s_0$. This completes our second step.

\paragraph{Step 3: two Wasserstein lower bounds.}
We now prove
\[
 W_2(\nu_{A_{t,N},s^{-2}},\pi_M)
 \ge c\frac{t^2}{s+t^2},
 \qquad
 W_2^2(\nu_{A_{t,N},s^{-2}},\pi_M)
 \ge c\frac{s^3}{s+t^2}
\]
using 
\eqref{eq:lower-bound-normalizer-comparison} to control the normalizing
constant in the denominator. We start with the first term. Since
\[
  W_2(\nu_{A_{t,N},s^{-2}},\pi_M)
 \ge W_1(\nu_{A_{t,N},s^{-2}},\pi_M)
 \]
it is sufficient to bound the Wasserstein-1 distance. By
Kantorovich--Rubinstein duality,
\begin{align*}
  W_1(\nu_{A_{t,N},s^{-2}},\pi_M)
 & =\sup_{\substack{f:\, \operatorname{Lip}(f)\le1}}
 \left|
 \int_K f(x)\,\nu_{A_{t,N},s^{-2}}(dx)
 -\int_M f(y)\,\pi_M(dy)
 \right|\\
 &\geq \left|
 \int_K f(x)\,\nu_{A_{t,N},s^{-2}}(dx)
 -\int_M f(y)\,\pi_M(dy)
 \right|\qquad \forall\, f:\operatorname{Lip}(f)=1.
\end{align*}
To get the lower bound, it will be thus sufficient to construct such an $f$, which is what we now do.

Without loss of generality, let the top straight edge of $\Omega$ lie on
the $x_1$-axis. Let $r$ be the fixed tubular-neighborhood
radius from Section~\ref{sec:model}, and choose a fixed $\rho<r/2$ such that
\[
 [-8\rho,8\rho]\times\{0\}\subset M,
\]
and no point of $M$ outside this straight edge lies in
$[-8\rho,8\rho]\times[-2\rho,2\rho]$. 
Our $f$ shall be $\rho/2 F$ where $F:\R^2\to\R$ is
\[
 F(x):=\left(1-\frac{|x_1|}{\rho}\right)_+
       \left(1-\frac{|x_2|}{\rho}\right)_+.
\]
The positive parts make $F$ vanish outside
$[-\rho,\rho]\times[-\rho,\rho]$. Thus all integrals involving $F$ are
restricted to the rectangle where
\eqref{eq:lower-bound-flat-identity} holds. Moreover, $F$ is
$2/\rho$-Lipschitz. Since $\pi_M$ is uniform arclength, we can directly evaluate the following integral
\[
 \int F(x)\pi_M(x)dx=\frac{\rho}{V_M}.
\]
Next, we evaluate 
\[
  \left|
 \int_K \frac{\rho F(x)}{2}\,\nu_{A_{t,N},s^{-2}}(dx)
 -\int_M \frac{\rho F(x)}{2}\,\pi_M(dy)\numberthis\label{eq:KR-lower-bound}
 \right|
 \]
Observe that whenever $x\in [-\rho,\rho]\times[-\rho,\rho]$,
\begin{equation}
 d_{A_{t,N}}(x)=d_M(x)=|x_2|,
\label{eq:lower-bound-flat-identity}
\end{equation}
where $x=(x_1,x_2)$.




Recall that
\[
 w_s(u)=\exp\left\{-\frac{\chi(u^2)}{2s^2}\right\},
 \qquad
 \nu_{A_{t,N},s^{-2}}(dx)
 =\frac{w_s(d_{A_{t,N}}(x))}{\mathcal Z_{A_{t,N}}(s)}\,dx.
\]
Using \eqref{eq:lower-bound-flat-identity} on the support of $F$ gives
\[
 \int_K F(x)w_s(d_{A_{t,N}}(x))\,dx
 =2\rho\int_0^\rho
   \left(1-\frac{u}{\rho}\right)w_s(u)\,du
 \le2\rho\int_0^\rho w_s(u)\,du.
\]
For $0<u<\rho$, the curve denoting the inner shrinking and outer fattening of $M$ at distance $u$
have total length $2V_M$, as established in the derivation of
\eqref{eq:lower-bound-manifold-normalizer}. Integrating over $u=d_M(x)$ gives
\[
 \mathcal Z_M(s)
 \ge \int_{\{x:d_M(x)<\rho\}}w_s(d_M(x))\,dx
 =2V_M\int_0^\rho w_s(u)\,du.
\]
Therefore
\begin{align*}
 \int_K F(x)\,\nu_{A_{t,N},s^{-2}}(dx)
 &\le \frac{\rho}{V_M}
       \frac{\mathcal Z_M(s)}{\mathcal Z_{A_{t,N}}(s)},\qquad \text{and}\qquad 
 \int_M F(y)\,\pi_M(dy)=\frac{\rho}{V_M}.
\end{align*}
We now substitute everything into \eqref{eq:KR-lower-bound} to get
\begin{align*}
 W_1(\nu_{A_{t,N},s^{-2}},\pi_M)
 &\ge \frac{\rho}{2}
 \left\{
 \int_M F(y)\,\pi_M(dy)
 -\int_K F(x)\,\nu_{A_{t,N},s^{-2}}(dx)
 \right\}\\
 &\ge\frac{\rho^2}{2V_M}
 \frac{\mathcal Z_{A_{t,N}}(s)-\mathcal Z_M(s)}
 {\mathcal Z_{A_{t,N}}(s)}.
\end{align*}
By \eqref{eq:lower-bound-normalizer-comparison},  $$\frac{\mathcal Z_{A_{t,N}}(s)-\mathcal Z_M(s)}
 {\mathcal Z_{A_{t,N}}(s)}\geq ct^2/(s+t^2).$$ Thus,
 \begin{align*}
   W_1(\nu_{A_{t,N},s^{-2}},\pi_M)
 \ge c\frac{t^2}{s+t^2},
 \qquad 0<s\le s_0,
 \end{align*}
 which in turn implies
\begin{equation}
 W_2(\nu_{A_{t,N},s^{-2}},\pi_M)
 \ge W_1(\nu_{A_{t,N},s^{-2}},\pi_M)
 \ge c\frac{t^2}{s+t^2},
 \qquad 0<s\le s_0.
\label{eq:lower-bound-tangential-deficit}
\end{equation}
We now lower bound $W_2^2$.

For any coupling of $x\sim\nu_{A_{t,N},s^{-2}}$ and $y\sim\pi_M$,
$\lVert x-y\rVert^2\ge d_M(x)^2$. Taking the infimum over all couplings
therefore gives
\[
 W_2^2(\nu_{A_{t,N},s^{-2}},\pi_M)
 \ge \frac{1}{\mathcal Z_{A_{t,N}}(s)}
 \int_K d_M(x)^2w_s(d_{A_{t,N}}(x))\,dx.
\]
Decrease $s_0$ so that $2s_0<\rho$. If
$|x_1|\le\rho/2$ and $s\le x_2\le2s$, then
\eqref{eq:lower-bound-flat-identity} gives
$d_M(x)=d_{A_{t,N}}(x)=x_2$. Moreover, $x_2<\rho<r$, so
\[
 w_s(d_{A_{t,N}}(x))
 =\exp\left\{-\frac{x_2^2}{2s^2}\right\}\ge e^{-2}.
\]
Restricting the preceding integral to this rectangle yields
\begin{align*}
 W_2^2(\nu_{A_{t,N},s^{-2}},\pi_M)
 &\ge \frac{e^{-2}}{\mathcal Z_{A_{t,N}}(s)}
 \int_{-\rho/2}^{\rho/2}\int_s^{2s}x_2^2\,dx_2\,dx_1\\
 &\ge \frac{\rho e^{-2}s^3}{\mathcal Z_{A_{t,N}}(s)}.
\end{align*}
Since \eqref{eq:lower-bound-normalizer-comparison} gives
$\mathcal Z_{A_{t,N}}(s)\le C(s+t^2)$, we obtain
\begin{equation}
 W_2^2(\nu_{A_{t,N},s^{-2}},\pi_M)
 \ge c\frac{s^3}{s+t^2},
 \qquad 0<s\le s_0.
\label{eq:lower-bound-normal-spread}
\end{equation}

\paragraph{Step 4: optimization over the penalty strength.}
Suppose first that $t^2\le s\le s_0$. Equations
\eqref{eq:lower-bound-tangential-deficit} and
\eqref{eq:lower-bound-normal-spread} imply
\[
 W_2(\nu_{A_{t,N},s^{-2}},\pi_M)
 \ge c\max\left\{\frac{t^2}{s},s\right\}
 \ge ct.
\]
If $0<s<t^2$, the tangential bound is bounded below by a fixed positive
constant, and therefore by $ct$ after decreasing $t_0$.

Finally, if $s\ge s_0$, then
$0\le\chi(d_{A_{t,N}}^2)\le\chi(4r^2)$ on $K$, so the density of
$\nu_{A_{t,N},s^{-2}}$ with respect to Lebesgue measure is bounded below
uniformly in $t$, $N$, and $s$ by
\[
 \frac{1}{\Leb_2(K)}
 \exp\left\{-\frac{\chi(4r^2)}{2s_0^2}\right\}.
\]
Since $\int_Kd_M(x)^2\,dx>0$, the coupling lower bound used above gives
$W_2(\nu_{A_{t,N},s^{-2}},\pi_M)\ge c_0>0$, which is again at least $ct$ for
small $t$. Thus the bound $ct$ holds for every $s>0$, equivalently for
every $\kappa>0$. The upper estimate
$\Delta\le Ct^2$ in \eqref{eq:lower-bound-basic-geometry} then gives
$ct\ge c\sqrt\Delta$, completing the proof.
\end{proof}

\subsection{Proof of Proposition~\ref{prop:torus-corollary}}

\begin{proof}
The map $F$ is a smooth embedding of the two-dimensional torus into
$\R^3$, so $D=3$, $d=2$, and $q=1$. Its image is compact, connected, and
without boundary. A unit normal is
\[
 n(\theta,\phi)
 =\bigl(\cos\theta\cos\phi,
        \cos\theta\sin\phi,
        \sin\theta\bigr),
\]
and the normal-coordinate map is
\[
 F(\theta,\phi)+t n(\theta,\phi)
 =\bigl((3+(1+t)\cos\theta)\cos\phi,
        (3+(1+t)\cos\theta)\sin\phi,
        (1+t)\sin\theta\bigr).
\]
This map is injective for $|t|<1$: the radial coordinate is then strictly
positive, and the toroidal angle $\phi$, the polar angle $\theta$, and $t$
are uniquely recovered. At $t=-1$, every value of $\theta$ with a fixed
$\phi$ maps to the corresponding point of the central circle. Hence
$\Reach(M)=1$, and Assumption~\ref{ass:true-manifold} holds with
$\tau_0=1$. The surface element is
\[
 \Vol_M(dy)=(3+\cos\theta)\,d\theta\,d\phi,
 \qquad V_M=12\pi^2.
\]
Thus uniform surface observations can be generated by taking $\phi$
uniformly on $[0,2\pi)$ and independently drawing $\theta$ with density
\[
 p_{\mathrm{obs}}(\theta)=\frac{3+\cos\theta}{6\pi},
 \qquad 0\le\theta<2\pi.
\]

Since $M\subseteq\overline B(0,4)$,
Assumption~\ref{ass:ambient-envelope} holds with $z_0=0$ and $R_0=4$.
Taking $r=1/4$ gives $2r<\tau_0$ and
$K=\overline B(0,19/4)$, from which the stated values of $L_K$ and $|K|$
follow. The potential $g(x)=x_3$ is smooth on $\R^3$, with
$\nabla g=e_3$ and $\nabla^2g=0$. This proves
$G=1$, $H=0$, and $\osc_K(g)=19/2$, verifying
Assumption~\ref{ass:potential-target}. Also,
\[
 \begin{aligned}
 Z_M
 &=\int_0^{2\pi}\int_0^{2\pi}
   e^{-\sin\theta}(3+\cos\theta)\,d\theta\,d\phi\\
 &=12\pi^2I_0(1),
 \end{aligned}
\]
which gives the displayed angular density.

The polynomial $w$ satisfies $w(0)=1$, $w(1)=0$,
$w'(0)=w'(1)=0$, and $0\le w\le1$ on $[0,1]$. Therefore the displayed
$\chi$ is nondecreasing and $C^2$, equals the identity on $[0,r^2]$, and
is constant on $[4r^2,\infty)$. Furthermore,
\[
 C_\chi=1,
 \qquad
 \chi(4r^2)
 =r^2+3r^2\int_0^1w(t)\,dt
 =\frac52r^2=\frac{5}{32},
\]
so \eqref{eq:cap-inner}--\eqref{eq:cap-lipschitz} hold.

Substitution into the constants preceding
Theorem~\ref{thm:rwmh-end-to-end} gives
\begin{equation}
 C_d=27,
 \qquad
 A_M=81e^{1/4},
 \qquad
 \kappa_0=162e^{1/4}\approx208.01.
\label{eq:torus-analysis-constants}
\end{equation}
Indeed, $8q/r^2=128$, whereas $2A_Mq=162e^{1/4}>128$.
The minorization constant \eqref{eq:rwmh-deterministic-minorization} becomes
\begin{equation}
 \underline\varepsilon_{\kappa,\sigma}
 =\frac{6859\pi}{48}(2\pi\sigma^2)^{-3/2}
 \exp\left\{
 -\frac{361}{8\sigma^2}-\frac{19}{2}-\frac{5\kappa}{64}
 \right\}.
\label{eq:torus-minorization}
\end{equation}

Condition \eqref{eq:torus-target-range} implies both
$\kappa_N\ge\kappa_0$ and $\epsilon_{\mathrm{tar}}^2\le r/4$.
Because $d=2$ and $\beta_d=2$, the reconstruction condition in
Corollary~\ref{cor:rwmh-expected-rate} is exactly
\eqref{eq:reconstruction-sample-size} with $\beta_d=2$. Here $C_H,C_P$,
and $N_0$ are the fixed-model constants supplied by
Proposition~\ref{prop:reconstruction-event}.
Substituting \eqref{eq:torus-minorization} into
\eqref{eq:rwmh-optimized-iterations} gives
\eqref{eq:torus-iteration-count}.

Therefore all hypotheses of
Corollary~\ref{cor:rwmh-expected-rate} are therefore satisfied. That is 
Assumptions~\ref{ass:true-manifold}--\ref{ass:potential-target} and
\eqref{eq:cap-inner}--\eqref{eq:cap-lipschitz} hold with
\begin{equation}
 \begin{gathered}
 D=3,\quad d=2,\quad q=1,\quad \tau_0=1,\quad z_0=0,
 \quad R_0=4,\quad r=\frac14,\\
 K=\overline B\left(0,\frac{19}{4}\right),\quad
 L_K=\frac{19}{2},\quad |K|=\frac{6859\pi}{48},\\
 G=1,\quad H=0,\quad \osc_K(g)=\frac{19}{2},\quad
 C_\chi=1,\quad \chi(4r^2)=\frac{5}{32}.
 \end{gathered}
\label{eq:torus-model-constants}
\end{equation}
Moreover,
\[
 Z_M=12\pi^2I_0(1),
 \qquad
 \pi_M(d\theta,d\phi)
 =\frac{e^{-\sin\theta}(3+\cos\theta)}
 {12\pi^2I_0(1)}\,d\theta\,d\phi,
\]
where $I_0$ is the modified Bessel function of the first kind.
This gives the claimed expected Wasserstein error and the scaling
$N=\widetilde O(\epsilon_{\mathrm{tar}}^{-2})$.
\end{proof}

\bibliographystyle{plainnat}
\bibliography{neurips_2026}

\end{document}